\definecolor{forestgreen}{rgb}{0.0, 0.27, 0.13}
\newtheorem{theorem}{Theorem}[section]
\newtheorem{lemma}[theorem]{Lemma}
\newtheorem{corollary}[theorem]{Corollary}
\newtheorem{proposition}[theorem]{Proposition}
\newtheorem{claim}[theorem]{Claim}
\theoremstyle{definition}
\newtheorem{example}[theorem]{Example}
\newtheorem{remark}[theorem]{Remark}
\def\bN{\mathbb{N}}
\def\bZ{\mathbb{Z}}
\def\bR{\mathbb{R}}
\def\bS{\mathbb{S}}
\def\bP{\mathbb{P}}
\def\bM{\mathbb{M}}
\def\cF{\mathcal{F}}
\def\cI{\mathcal{I}}
\def\bi{\mathbf{i}}
\def\bj{\mathbf{j}}
\def\bi{{\mathbf i}}
\def\bj{{\mathbf j}}
\DeclareMathOperator{\prob}{Prob}
\DeclareMathOperator{\diam}{diam}
\DeclareMathOperator{\supp}{supp}
\DeclareMathOperator{\card}{card}
\DeclareMathOperator{\GL}{GL}
\newcommand{\eqdef}{\stackrel{\scriptscriptstyle\rm def}{=}}
\numberwithin{equation}{section}
\DeclareMathSymbol{\emptyset}{\mathord}{AMSb}{"3F}
\title[Synchronization rates and limit laws]{Synchronization rates and limit laws\\for random dynamical systems}
\author[K. Gelfert]{Katrin Gelfert}
\address{Instituto de Matem\'atica, Universidade Federal do Rio de Janeiro, Cidade Universit\'aria - Ilha do Fund\~ao, Rio de Janeiro 21945-909,  Brazil}
\email{gelfert@im.ufrj.br}
\author[G. Salcedo]{Graccyela Salcedo}
\address{Faculty of Mathematics and Computer Science, Nicolaus Copernicus University, ul. Chopina 12/18,87-100 Toruń, Poland}
\email{gsalcedo@mat.umk.pl}
\begin{document}

\begin{abstract}
We study general random dynamical systems of continuous maps on some compact metric space. Assuming a local contraction condition and uniqueness of the stationary measure, we establish probabilistic limit laws such as the central limit theorem, the strong law of large numbers, and the law of the iterated logarithm. Moreover, we study exponential synchronization and synchronization on average. In the particular case of iterated function systems on $\bS^1$, we analyze synchronization rates and describe their large deviations.  In the case of $C^{1+\beta}$-diffeomorphisms, these deviations on random orbits are obtained from the large deviations of the expected Lyapunov exponent.
\end{abstract}

\begin{thanks}
{This study was financed in part by the FAPERJ-grants 
PRONEX E-26/010.001252/2016 FAPERJ,  
E-16/2014 INCT/FAPERJ, 
E-26/200.371/2023 CNE/FAPERJ, 
CNPq-grant 305327/2022-4,  
 (Brazil); 
 the grant 2022/10341-9 São Paulo Research Foundation (FAPESP); and the Center of Excellence “Dynamics, Mathematical Analysis and Artificial Intelligence” at Nicolaus Copernicus University in Toruń. The authors would like to thank Anton Gorodetski for his invaluable comments and contributions regarding Example \ref{exemplo:Anton}.
}
\end{thanks}

\keywords{random dynamical systems, iterated function systems, local contraction, synchronization, strong law of large numbers, central limit theorem, law of iterated logarithm, large deviations of Lyapunov exponents}
\subjclass[2000]{}

\maketitle

\section{Introduction}\label{secintro}

Let $(M,D)$ be a compact metric space and consider the space $C^0(M)$ of continuous maps  $f\colon M\to M$ endowed with the supremum distance $D_\infty$\footnote{For $f,g\in C^0(M)$, $D_{\infty}(f,g)=\sup_{x\in M}D(f(x),g(x))$.}. Consider a subset $\cF$ of $C^0(M)$ and index it as $\cF=\{f_i\}_{i\in\cI}$ for some index set $\cI$ (possibly using $\cI=\cF$ itself).  Fix a probability measure $\mu$ on $\cI$. We assume that $\mu$ is non-degenerate, that is, $\cI$ is the support of $\mu$.
We will call $(\cF,\mu)$ a \emph{random dynamical system (RDS)} on $M$. When $\cI$ is finite, $(\cF,\mu)$ is also called an \emph{iterated function system (IFS) with probabilities}.

The \emph{random walk} generated by $\mu$ on $C^0(M)$ is the random
sequence $\bi\mapsto (f_{\bi}^n)_{n\in\bN}$ of elements of $C^0(M)$, defined by
\[
\bi=(i_1,i_2,\ldots)\in \cI^\bN,\,n\in\bN,\quad f_{\bi}^{n}\eqdef f_{i_n}\circ\cdots\circ f_{i_1}.
\]
It can be convenient alongside the random walk-point of view to take a dynamical systems point of view and consider the associated \emph{skew-product}, that is the transformation
\begin{equation}\label{def:skewprod-RD}
	T\colon \cI^{\bN}\times M\to \cI^{\bN}\times M,\quad
	T(\bi,x)\eqdef (\sigma(\bi),f_{i_1}(x)),
\end{equation}
where $\sigma$ is the shift operator on $\cI^{\bN}$ and $i_1$ is the first coordinate of $\bi$. Throughout this paper, we consider the product%
\footnote{The considered sigma-algebra of $\cI$ is the one induced by the Borel sigma-algebra of $(\cF,{D_\infty}|_{\cF})\subset(C^0(M),D_\infty)$. Note that $\mu^\bN$ is uniquely determined by $\mu$ (see, Kolmogorov's extension theorem). If $\cF$ is finite, then the index set can be taken $\cI=\{0,\ldots,N-1\}$, $N=\card\cF$, and $\mu^{\bN}$ a Bernoulli probability measure.}
measure $\mu^{\bN}$ on $\cI^{\bN}$. Note that the measure $\mu^{\bN}$ is $\sigma$-invariant and, by Kolmogorov’s 0-1 law, it is $\sigma$-ergodic.

Given $\bi=(i_1,i_2,\ldots)\in\cI^{\bN}$, $n\in\bN$, and $x\in M$, we define
\begin{align}\label{chain:PropEst}
	X_n^x(\bi)
	\eqdef f_\bi^n(x),
	\quad
	X_0^x(\bi)
	=x,
\end{align}
which can be interpret as the ``trajectory'' corresponding to the realization $\bi$ of the random sequence $(f_{\bi}^n)_{n\in\bN}$ starting at $x$. In other words, $(X_n^x)_{n\ge0}$ is the Markov chain with an initial distribution being the Dirac measure $\delta_x$. A Borel probability measure $\nu$ on $M$ is \emph{$\mu$-stationary} if it is a fixed point of the \emph{Markov operator} $\bM\colon \mbox{Prob}(M)\to\mbox{Prob}(M)$ that acts on the space of Borel probability measures on $M$, $\mbox{Prob}(M)$, and is defined by
\begin{equation}\label{cFast}
	\nu
	= \bM\nu
	\eqdef \int (f_i)_\ast\nu\,d\mu(i),
	\quad\text{ where }\quad
	(f_i)_\ast\nu\eqdef\nu\circ f_i^{-1}
\end{equation}
(to simplify notation, in the above definition we suppress its dependence on $\mu$).
This term is justified by the fact that if $X_0$ is a $\nu$-distributed random variable, then
\[
	X_n
	\eqdef f_{I_n}\circ\cdots\circ f_{I_1}(X_0)
\]
 is a stationary stochastic sequence with values in $M$, where $(I_n)_{n\in\bN}$ is an i.i.d.\footnote{The variables $I_n$ are \emph{identically and independently distributed}, that is, they have all the same distribution and are all mutually independent.} sequence of  random variables distributed on $\cI$ according to $\mu$. Since all maps in $\cF$  act continuously on the compact metric space $M$, this chain has the \emph{weak Feller property} and hence there exists at least one $\mu$-stationary measure. Note that for any $\mu$-stationary probability measure $\nu$, the probability measure $\mu^\bN\otimes\nu$ on $\cI^\bN\times M$ is invariant and ergodic with respect to the skew-product map \eqref{def:skewprod-RD}.

 For systems that are (uniformly) contracting, the seminal work of Hutchison \cite{HUTCH} gave the first fundamental results about the existence of the (topological) attractor and the existence of the (unique) stationary measure, considering however only a finite number of contractions (that is, a contracting IFS with probabilities) on a complete metric space.  One way to extend Hutchinson's results and ideas is to consider larger spaces of contracting maps. In \cite{Lew:93} and \cite{Men:98}, compact metric spaces were considered, however \emph{a priori} still under the assumptions of contraction on average (see property \textbf{(CA)} defined below) and (uniform) contraction, respectively. One of our main motivations to consider the more general setting of RDS that are not necessarily contracting is the currently very intense study of matrix cocycles (see, for example, \cite{DuaKle:17}) and of partially hyperbolic dynamical systems of skew-product type (e.g. \cite{DiaGelRam:17} and references therein). By their nature, such systems usually do not satisfy any contraction, but rather have ``coexisting expanding and contracting regions''.

Let us state our first key hypotheses in what is studied below.
\begin{itemize}[ leftmargin=1cm ]
\item[\textbf{(P)}]  $\cF$ is \emph{proximal} on $(M,D)$ if for every $x,y\in M$ there exist $\bi\in\cI$ and a sequence $(n_k)_{k\in\bN}$ such that
\[
\lim_{k\to\infty} D\left(f_\bi^{n_k}(x),f_\bi^{n_k}(y) \right)=0.
\]
\item[\textbf{(LC)}] $(\cF,\mu)$ has a \emph{local contraction property} on $(M,D)$ if there exists a \emph{local contraction rate} $q\in(0,1)$ so that for every $x\in M$, for almost every $\bi\in\cI$ there exists a neighborhood $B$ of $x$ such that for all $n\in\bN$,
\[
	\diam_D\big(f_\bi^n(B)\big)
	\le q^n,
\]
where $\diam_D B\eqdef\sup_{y,z\in B}D(y,z)$ is the diameter of $B$ with respect to $D$.
\end{itemize}
Note that property \textbf{(P)} is purely a topological one. In particular, \textbf{(P)} is independent of any non-degenerate measure $\mu$ on $\cI$.
Moreover, by compactness, the property \textbf{(P)} is also unaffected by any metric change on $M$ that preserves the topology. This is in contrast to property \textbf{(LC)}, where a change of non-degenerate measures on $\cI$ can impact the contraction rate. Additionally, even though a change of metrics may maintain the convergence of diameters to $0$, but such convergence may be non-exponential.

We explore the following consequences for an RDS having the properties \textbf{(P)} and \textbf{(LC)}:
\begin{enumerate}[ leftmargin=0.7cm ]
\item synchronization (exponentially fast and on average) (see Section \ref{subsec:INT-1}),
\item limit laws for additive functionals of Markov chains, in particular for
\begin{itemize}
\item a general RDS of continuous maps on a compact metric space (see Section \ref{subsec:INT-2}),
\item  an IFS of homeomorphisms and $C^1$ diffeomorphisms of $\bS^1$ (see Section \ref{sec:IFSoncircle});
\end{itemize}
\item large deviations of synchronization rates (and Lyapunov exponents) for an IFS of $C^{1+\beta}$ diffeomorphisms of $\bS^1$ (see Section  \ref{subsec-INT-LD}).
\end{enumerate}

The phenomenon of synchronization observed in RDSs is the one in which random orbits start at different initial conditions and eventually approach each other. Even though the properties \textbf{(LC)} and synchronization have comparable flavors, to the best of our knowledge, the latter property does not imply the former one.
One further motivation for studying the property \textbf{(LC)} is because it enables us to establish a number of limit laws such as, for example, the central limit theorem (CLT). Below we will provide details on previous work toward items (1)--(3) listed above, in particular about \textbf{(LC)}.

\subsection{Synchronization: exponentially fast and on average}\label{subsec:INT-1}

\begin{itemize}[ leftmargin=0.7cm ]
\item[\textbf{(S)}] $(\cF,\mu)$ is \emph{synchronizing} on $(M,D)$ if for every $x,y\in M$ and $\mu^{\bN}$-almost every $\bi\in\cI$,
\[
	\lim_{n\to\infty}D(X_n^x(\bi),X_n^y(\bi))=0.
\]
\end{itemize}

One of the first results towards synchronization was obtained by Furstenberg \cite{HFur:63} and Ledrappier \cite{Led:86} for projectivized linear dynamics and later extended by Baxendale \cite{Bax:89} and Crauel \cite{Cra:90} to the random iteration of diffeomorphisms on compact manifolds. All such results exclude \emph{a priori} the ``degenerate case'' assuming the absence of a common invariant measure (this is hypothesis \textbf{(H)} stated below) and prove the existence of a stationary measure with negative ``volume contraction''. For one-dimensional dynamics, this implies local contraction and synchronization. Assuming additionally minimality, Antonov \cite{Antonov:1984} was able to extend such results to circle \emph{homeomorphisms}. Assuming \textbf{(H)}, Malicet establishes a local (exponential) contraction property for RDS of circle homeomorphisms  (see \cite[Theorem A]{Mal:17} and Remark \ref{reminterplay} for further comments). Matias \cite{Matias:2022} extends \cite{Mal:17} to the Markovian case.

Let us also mention that in the context of Markovian random iterations of finitely many maps on compact subsets of a finite-dimensional Euclidean space and also assuming a purely topological condition on the maps, called \emph{splitting condition}, D\'iaz and Matias \cite[Theorem 2]{DiaMat:18} show an exponentially fast contraction of the whole space under the action of the random iterations. Note that on the circle $\bS^1$, this condition is never satisfied. Homburg \cite{Hom:18} proves that for any compact manifold $M$ there is a $C^1$-open set of IFS of $C^2$ diffeomorphisms on $M$ which is synchronizing and has a unique stationary measure.

We first consider the general setting of a RDS.

\begin{proposition}[Exponential synchronization]\label{prop:sync-exp}
	Assume that $(\cF,\mu)$ is an RDS of continuous maps on a compact metric space $(M,D)$ having the properties \textbf{(P)} and \textbf{(LC)}. Let $q\in(0,1)$ be an associated local contraction rate. Then  $(\cF,\mu)$ is synchronizing \textbf{(S)} on $(M,D)$ and has a unique $\mu$-stationary probability measure.
 Moreover, for every $x,y\in M$ and $\mu^{\bN}$-almost every $\bi$ there is $C>0$ so that
\begin{align}\label{prop1.1:syncexp}
    D(X^x_n(\bi),X^y_n(\bi))
	\le Cq^n\quad\text{ for all }n\in\bN.
\end{align}
	
\end{proposition}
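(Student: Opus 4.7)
The plan is to first establish the qualitative synchronization (S), then deduce uniqueness of the stationary measure, and finally upgrade to exponential rate using (LC).

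\emph{Step 1: hitting an $\epsilon$-window with positive probability, uniformly.} Fix $\epsilon>0$ and $x,y\in M$. By \textbf{(P)} there exist $\bj=(j_1,j_2,\ldots)$ and $n=n(x,y)$ with $D(f_{\bj}^n(x),f_{\bj}^n(y))<\epsilon/2$. Since each $f_i$ is continuous and compositions depend continuously on their arguments, the set
\[
U(x,y,n)=\big\{(i_1,\ldots,i_n)\in\cI^n : D(f_{i_n}\!\circ\cdots\circ f_{i_1}(x),f_{i_n}\!\circ\cdots\circ f_{i_1}(y))<\epsilon\big\}
\]
is open in $\cI^n$ and contains $(j_1,\ldots,j_n)$. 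Because $\mu$ has full support, $\mu^n(U(x,y,n))>0$, so the cylinder over $U(x,y,n)$ in $\cI^\bN$ has positive $\mu^\bN$-measure. A continuity argument extends this to a neighborhood of $(x,y)$ in $M\times M$, and compactness of $M\times M$ yields a uniform constant $c_\epsilon>0$ with
\[
h_\epsilon(x,y)\eqdef\mu^\bN\{\bi:\exists\, n,\,D(X_n^x(\bi),X_n^y(\bi))<\epsilon\}\ge c_\epsilon\quad\text{for all }x,y\in M.
\]

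\emph{Step 2: upgrade to probability $1$, hence qualitative synchronization.} Let $\tau_\epsilon=\inf\{n:D(X_n^x,X_n^y)<\epsilon\}$. Using the strong Markov property at bounded truncations of $\tau_\epsilon$ and the uniform lower bound $h_\epsilon\ge c_\epsilon$, I obtain $\mu^\bN(\tau_\epsilon>kN)\le(1-c_\epsilon)^k$ for a suitable $N$, and hence $\tau_\epsilon<\infty$ almost surely. Applying this to $\epsilon=1/k$ and taking a countable intersection yields $\liminf_n D(X_n^x,X_n^y)=0$ almost surely, so after combination with \textbf{(LC)} below, property \textbf{(S)} follows.

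\emph{Step 3: producing the exponential rate.} For each $z\in M$, let $\rho(\bi,z)$ be the supremum of radii $r\in(0,\diam M]$ such that $\diam_D f_\bi^n(B(z,r))\le q^n$ for every $n\in\bN$; this is a measurable function and $\rho(\bi,z)>0$ for $\mu^\bN$-a.e.\ $\bi$ by \textbf{(LC)}. Pick any $\mu$-stationary measure $\nu$ (one exists by compactness and the Feller property). By Fubini, $(\mu^\bN\otimes\nu)\{\rho>0\}=1$, so for any $\delta>0$ there is $r_0>0$ with $(\mu^\bN\otimes\nu)(G_{r_0})>1-\delta$, where $G_{r_0}=\{(\bi,z):\rho(\bi,z)\ge r_0\}$. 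Now iterate the hitting argument of Step~2, producing stopping times $\tau_1<\tau_2<\cdots$ at which $D(X_{\tau_k}^x,X_{\tau_k}^y)<r_0$. By the strong Markov property, conditionally on $(X_{\tau_k}^x,X_{\tau_k}^y)=(x_k',y_k')$, the shifted sequence $\sigma^{\tau_k}\bi$ is $\mu^\bN$-distributed and independent of the past; hence the events $\{\rho(\sigma^{\tau_k}\bi,X_{\tau_k}^x)\ge r_0\}$ are conditionally independent with probability bounded below by $1-\delta'$ (uniformly, after replacing $r_0$ by a slightly smaller radius and using an Egorov/Fubini argument on $\nu$, together with stationarity of $X_{\tau_k}^x$ under the hitting chain). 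A Borel--Cantelli type argument then provides a finite (random) $N$ for which both
\[
D(X_N^x(\bi),X_N^y(\bi))<r_0\quad\text{and}\quad\rho(\sigma^N\bi,X_N^x(\bi))\ge r_0
\]
hold simultaneously; hence for $n\ge N$ the ball $B(X_N^x,r_0)$ (which contains $X_N^y$) is contracted by $f_{\sigma^N\bi}^{n-N}$ to diameter $\le q^{n-N}$. Setting $C=q^{-N}$ yields \eqref{prop1.1:syncexp}.

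\emph{Step 4: uniqueness of the stationary measure.} If $\nu_1,\nu_2$ are $\mu$-stationary and $\varphi\in C^0(M)$, then $\int\varphi\,d\nu_j=\int\int\varphi(X_n^z(\bi))\,d\nu_j(z)\,d\mu^\bN(\bi)$ for every $n$. By the exponential synchronization just proved, $|\varphi(X_n^z(\bi))-\varphi(X_n^{z'}(\bi))|\to0$ for all $z,z'$ and $\mu^\bN$-a.e.\ $\bi$; the double integral difference $\int\!\int(\varphi(X_n^z)-\varphi(X_n^{z'}))\,d\nu_1(z)d\nu_2(z')\,d\mu^\bN\to 0$ then forces $\int\varphi\,d\nu_1=\int\varphi\,d\nu_2$.

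The main obstacle is Step~3: combining the proximal hitting (which controls distance between the two orbits) with the local-contraction event (which controls only \emph{one} orbit against nearby initial conditions) in a way that guarantees both conditions hold \emph{simultaneously} at some finite time almost surely. The strong Markov property at successive hitting times, together with the Fubini reduction of \textbf{(LC)} against the stationary measure $\nu$, is what makes this coupling go through.
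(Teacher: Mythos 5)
Your Steps 1, 2, and 4 are sound (Step 4 is essentially the paper's uniqueness argument), and the overall strategy — produce, almost surely, a finite time $N$ at which the two orbits are $r_0$-close \emph{and} the tail sequence $\sigma^N\bi$ locally contracts a ball of radius $r_0$ around $X_N^x$ — is the right idea and is also what the paper does. The gap is in Step 3, and it is a genuine one.

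Your Step 3 reduces the problem to showing that the events $\{\rho(\sigma^{\tau_k}\bi,\,X_{\tau_k}^x)\ge r_0\}$ happen with uniformly positive conditional probability. To get this you invoke a Fubini/Egorov argument against the stationary measure $\nu$, which only tells you that for $\nu$-a.e.\ $z$ the probability $\mu^{\bN}\{\rho(\cdot,z)\ge r_0\}$ is close to $1$. But the random point $X_{\tau_k}^x$ at the $k$-th hitting time is \emph{not} distributed according to $\nu$, nor is there any reason for its law to be absolutely continuous with respect to $\nu$: a Markov chain stopped at a hitting time defined through the trajectory itself is not stationary, and "stationarity of $X_{\tau_k}^x$ under the hitting chain," which your argument rests on, is simply not available. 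So after the Egorov step you have no control over whether $X_{\tau_k}^x$ lands in your good set $A$, and the Borel--Cantelli argument does not close.

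The paper avoids this by never touching $\nu$ in this step. Its Lemma \ref{lemma:eq:diam-Omega} gives, for every $\delta<1$ and every $z\in M$, a neighborhood $B_z$ of $z$ such that the set of $\bi$ contracting $B_z$ at rate $q$ has $\mu^{\bN}$-measure at least $\delta$. Compactness of $M$ (not of the measure $\nu$) then produces a \emph{finite} cover $\{B_{z_1},\dots,B_{z_m}\}$, and one works with the (finite) family $\mathcal{B}$ of all intersections. By synchronization, for almost every $\bi$ there is a first time $N(\bi)$ at which $X_{N(\bi)}^x(\bi)$ and $X_{N(\bi)}^y(\bi)$ both lie in some element $V_\ell$ of the cover; this event depends only on the first $N(\bi)$ symbols. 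The contraction event $\mathcal{C}_\ell$ for $V_\ell$ has measure $\ge\delta$ and, crucially, is evaluated on the \emph{tail} coordinates, so it is independent of the prefix. Summing the products $\mu^n\big(\mathcal S(n,\ell)_1^n\big)\cdot\mu^{\bN}(\mathcal{C}_\ell)$ over all $(n,\ell)$ gives $\mu^{\bN}(\Omega^{x,y})\ge\delta\cdot\mu^{\bN}(\mathcal S^{x,y})=\delta$, and letting $\delta\uparrow 1$ finishes it. The point is that finiteness of the cover replaces the uniform-in-$z$ lower bound you were trying to extract from $\nu$; no information about the distribution of the stopped process is needed. If you replace your Fubini-against-$\nu$ step by this compactness argument, your proof goes through.

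Two smaller remarks. First, your Steps 1--2 only yield $\liminf_n D(X_n^x,X_n^y)=0$ a.s.; upgrading this to $\lim=0$ (i.e.\ to \textbf{(S)}) already requires the very coupling with \textbf{(LC)} that Step 3 is supposed to provide, so \textbf{(S)} and the exponential rate must be obtained together (the paper sidesteps this by quoting \cite[Prop.\ 4.18]{Mal:17} for \textbf{(S)} outright). Second, in Step 1 the openness of $U(x,y,n)$ in $\cI^n$ and the positivity of its $\mu^n$-measure do rely on the convention that $\cI$ carries the subspace topology from $(C^0(M),D_\infty)$ and that $\mu$ has full support; this is consistent with the paper's setup and is fine, but worth stating.
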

In order to obtain \textbf{(S)}, both hypotheses \textbf{(P)} and \textbf{(LC)} are indeed necessary, as it is illustrated by the following example.

\begin{example}\label{exemplo:Anton}
    Consider $f_1,f_2$ two orientation preserving diffeomorphisms on the circle $\bS^1=[0,1)$. Assume that the maps $f_1$ and $f_2$ each have exactly four fixed points, two attracting and two repelling ones. Let $d$ be the usual metric on $\bS^1$. Let $\{0,1/4,1/2,3/4\}$ and $\{1/8,3/8,5/8,7/8\}$ be the fixed points of $f_1$ and $f_2$, respectively. We assume that
\[\begin{split}
	\min\{|f_1'(0)|,|f_1'(1/2)|, |f_2'(1/8)|,|f_2'(5/8)| \}&>1,\\
    	\max\{|f_1'(1/4)|,|f_1'(3/4)|, |f_2'(3/8)|,|f_2'(7/8)| \}&<1.
\end{split}\]	
Now, consider $f_3\colon\bS^1\to\bS^1$ given by $f_3(x)=x+1/2 \mod 1 $. Let $\cI=\{1,2,3\}$ and $\cF=\{f_i\}_{i\in\cI}$. For any non-degenerate probability measure $\mu$ on $\cI$ (that is, satisfying $\mu(\{i\})>0$ for $i\in\cI$), the pair $(\cF,\mu)$ has the property of local contraction \textbf{(LC)}. This is a direct consequence of \cite[Theorem A]{Mal:17} and the fact that $f_1$ and $f_2$ have no common fixed points. Note that for any pair of points $x\in [1/4,3/8]$ and $y\in [3/4,7/8]$ and any sequence $\bi\in\cI$,
    \[
    d(X^x_n(\bi),X^y_n(\bi))=d(f_{\bi}^n(x),f_{\bi}^n(y))\geq \frac{3}{8},
    \]
    for all $n\in \bN$. Therefore, $\cF$ is not proximal and so $(\cF,\mu)$ is not synchronizing.
\end{example}

\begin{corollary}\label{cor:atomic}
    Assume that $(\cF,\mu)$ is an RDS of continuous maps on a compact metric space $(M,D)$ having the properties \textbf{(P)} and \textbf{(LC)}. Let $\nu $ be the $\mu$-stationary probability measure on $M$. If all maps in $\cF$ are injective, then we have the following dichotomy:
    \begin{enumerate}
        \item either $\nu$ is a Dirac measure at a fixed point of $f_i$ for $\mu$-almost every $i$,
        \item or $\nu$ is non-atomic.
    \end{enumerate}
\end{corollary}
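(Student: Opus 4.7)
The plan is to assume $\nu$ has an atom and deduce that $\nu$ must then equal $\delta_{x_0}$ for some common fixed point $x_0$ of the $f_i$. Set $a^\ast \eqdef \max_{x\in M}\nu(\{x\})>0$ and $A\eqdef\{x\in M:\nu(\{x\})=a^\ast\}$; since $\nu$ is a probability measure, $A$ is finite with $|A|\le 1/a^\ast$. The argument then proceeds in two essential steps: first show that $A$ is permuted by $f_i$ for $\mu$-a.e.\ $i$, and then force $|A|=1$ via synchronization.

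For the first step, fix $x\in A$ and combine stationarity with injectivity of the $f_i$:
\[
	a^\ast
	= \nu(\{x\})
	= \int \nu\bigl(f_i^{-1}(\{x\})\bigr)\,d\mu(i),
\]
where each preimage $f_i^{-1}(\{x\})$ is either empty or a single point, and hence has $\nu$-mass at most $a^\ast$. Since the average equals $a^\ast$, one has $\nu(f_i^{-1}(\{x\}))=a^\ast$ for $\mu$-a.e.\ $i$. Thus for such $i$ the point $g_i(x)\eqdef f_i^{-1}(x)$ exists and lies in $A$, so $g_i\colon A\to A$ is an injection of a finite set, hence a bijection. Consequently $f_i|_A$ is a bijection of $A$ onto itself for $\mu$-a.e.\ $i$.

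For the second step, since $A\times A$ is finite, intersecting the almost-sure sets given by Proposition \ref{prop:sync-exp} yields a full-$\mu^{\bN}$-measure set of sequences $\bi$ along which, simultaneously, $D(f_\bi^n(x),f_\bi^n(y))\to 0$ for every $x,y\in A$ and every $f_{i_k}$ preserves $A$. For such $\bi$, the points $f_\bi^n(x),f_\bi^n(y)$ lie in the finite set $A$, so their distance can tend to zero only if it is eventually zero; injectivity of $f_\bi^n$ then forces $x=y$, whence $A=\{x_0\}$ and $f_i(x_0)=x_0$ for $\mu$-a.e.\ $i$. In particular $\delta_{x_0}$ is $\mu$-stationary, so if $a^\ast<1$ then $(1-a^\ast)^{-1}(\nu-a^\ast\delta_{x_0})$ would be a second stationary probability, contradicting the uniqueness asserted in Proposition \ref{prop:sync-exp}. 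Hence $a^\ast=1$ and $\nu=\delta_{x_0}$. The only genuine subtlety is the first step: the injectivity hypothesis is precisely what prevents atomic mass from ``splitting'' among several preimages, and is therefore essential to the $f_i$-invariance of $A$.
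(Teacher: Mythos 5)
Your proof is correct and follows essentially the same route as the paper: stationarity plus injectivity forces the set of heaviest atoms to be permuted by $f_i$ for $\mu$-a.e.\ $i$, synchronization plus finiteness forces this set to be a singleton $\{x_0\}$ fixed by $\mu$-a.e.\ $f_i$, and uniqueness of the stationary measure closes the argument. Two minor presentational differences: your first step is slightly more economical, since arguing that $g_i=f_i^{-1}|_A$ is an injection of the finite set $A$ replaces the paper's second stationarity computation yielding $\card f_i^{-1}(A)=\card A$; and your closing convex-decomposition step can be shortened to the observation that $\delta_{x_0}$ is itself $\mu$-stationary (as $f_i(x_0)=x_0$ for $\mu$-a.e.\ $i$), so uniqueness gives $\nu=\delta_{x_0}$ directly.
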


Given any pair of points $x,y$, the constant $C$ in the assertion of Proposition \ref{prop:sync-exp} depends on the random walk generated by $\mu$ and it is \emph{a priori} not clear that it can be guaranteed to be $\mu^{\bN}$-integrable. Integrability would immediately imply the following exponential synchronization on average. We prove this property assuming additionally that $(M,D)$ is compact. For further applications, we include the possibility of a change to an equivalent metric.%
\footnote{Two metrics are \emph{(topologically) equivalent} if they generate the same topology.
It is straightforward to check that $D^\alpha$, $\alpha\in(0,1]$, indeed defines a metric on $M$. Moreover, $D^\alpha$ is equivalent to $D$.}

\begin{theorem}\label{teo:sync-on-average}
Assume that $(\cF,\mu)$ is an RDS of continuous maps on a compact metric space $(M,D)$ having the properties \textbf{(P)} and \textbf{(LC)}. Let $q\in(0,1)$ be an associated local contraction rate. Then for any $\alpha\in(0,1]$ there exists $C>0$ such that for all $n\in\bN$
\[
\sup_{x,y \in M}  \sum_{k=0}^{n} \int D^\alpha(X_k^x(\bi),X_k^y(\bi))\,d\mu^\bN(\bi)\leq C\sum_{k=0}^{n}q^{k\alpha}.
\]
\end{theorem}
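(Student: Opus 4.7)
The plan is to establish the stronger pointwise bound
\[
\sup_{x,y\in M}\int D^\alpha(X_k^x(\bi),X_k^y(\bi))\,d\mu^\bN(\bi)\le C'r^k
\]
for some $r\in(0,1)$ and $C'>0$ independent of $k$; summing over $k$ then gives the theorem, since both $\sum r^k$ and $\sum q^{k\alpha}$ are bounded geometric tails. The heart of the argument is a coupling scheme with per-block success probability uniform in $(x,y)$, resting on two preparatory uniformizations. The first is a \emph{uniform local-contraction radius}. Setting
\[
A(x,\delta):=\{\bi\in\cI^\bN:\diam_D(f_\bi^n(B_D(x,\delta)))\le q^n\text{ for all }n\in\bN\},
\]
property \textbf{(LC)} gives $\mu^\bN(A(x,\delta))\nearrow 1$ as $\delta\searrow 0$ for each fixed $x$. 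A short compactness argument (if $\delta(x_k)\to 0$ along $x_k\to x^*$, a small ball around $x^*$ contains one around $x_k$, contradicting the definition of $\delta(x_k)$) then produces $\delta_0>0$ and $p>0$ with $\mu^\bN(A(x,\delta_0))\ge p$ uniformly in $x\in M$.

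The second uniformization is a \emph{uniform coupling time}:
\[
\inf_{x,y\in M}\mu^\bN\bigl\{\bi:\exists\,k\le N,\ D(X_k^x(\bi),X_k^y(\bi))<\delta_0/4\bigr\}\ge p'
\]
for suitable $N\in\bN$ and $p'>0$. For fixed $(x_0,y_0)\in M^2$, proximality \textbf{(P)} supplies $\bi^*$ and $n$ with $D(f_{\bi^*}^n(x_0),f_{\bi^*}^n(y_0))<\delta_0/8$. The $D_\infty$-continuity of the $n$-fold composition on $C^0(M)$ together with the non-degeneracy of $\mu$ (which makes open cylinders in $\cI^\bN$ have positive $\mu^\bN$-measure) then produces an open neighborhood of $(x_0,y_0)$ in $M^2$ and a positive-measure cylinder of $\bi$'s on which $D(X_n^{x'},X_n^{y'})<\delta_0/4$; compactness of $M^2$ supplies uniform $N,p'$ via a finite subcover.

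Putting these together, I would partition time into blocks $[jN,(j+1)N)$ and, in block $j$, let $\tau_j$ be the first time at which $D(X_{\tau_j}^x,X_{\tau_j}^y)<\delta_0/4$, if any, setting the success event $G_j:=\{\tau_j<\infty\}\cap\{\sigma^{\tau_j}\bi\in A(X_{\tau_j}^x(\bi),\delta_0)\}$. On $G_j$, the ball $B_D(X_{\tau_j}^x,\delta_0)$ contracts at rate $q$ and contains $X_{\tau_j}^y$, so $D(X_n^x,X_n^y)\le q^{n-\tau_j}\le q^{n-(j+1)N}$ for all $n\ge\tau_j$. The two uniformizations combined with the strong Markov property of the i.i.d.\ noise give $\mu^\bN(G_j\mid G_0^c\cap\cdots\cap G_{j-1}^c)\ge pp'$, whence $\mu^\bN(G_0^c\cap\cdots\cap G_J^c)\le(1-pp')^{J+1}$. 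Taking $J=\lfloor k/(2N)\rfloor$ and splitting the integral over success/failure yields $\int D^\alpha(X_k^x,X_k^y)\,d\mu^\bN\le q^{\alpha(k/2-N)}+\diam_D(M)^\alpha(1-pp')^{J+1}=O(r^k)$ for some $r\in(0,1)$, delivering the theorem. The main obstacle is the uniform coupling step: converting the purely qualitative \textbf{(P)} into a quantitative, uniform-in-$(x,y)$ waiting time requires compactness of $M^2$, $D_\infty$-continuity of finite compositions on $\cF$, and non-degeneracy of $\mu$ used simultaneously. A subsidiary care in the iteration, to avoid tail-measurability issues in the infinite-horizon event $A$ (conveniently handled by replacing it with a finite-horizon truncation during the conditioning), is the only other technical point.
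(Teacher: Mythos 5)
Your two preparatory uniformizations are sound, and the second (a uniform coupling window $N,p'$ extracted from \textbf{(P)}, $D_\infty$-continuity of $\bi\mapsto f_\bi^n$, non-degeneracy of $\mu$, and compactness of $M\times M$) is in fact a cleaner route to a uniform entry estimate than the one the paper takes through Proposition~\ref{proclaim:conunifweak}, which itself relies on Proposition~\ref{prop:sync-exp}. The gap is in the block-coupling step. The success event $G_j=\{\tau_j<\infty\}\cap\{\sigma^{\tau_j}\bi\in A(X_{\tau_j}^x(\bi),\delta_0)\}$ depends on the \emph{entire tail} of $\bi$, because $A(\cdot,\delta_0)$ requires contraction for all $n\in\bN$. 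Therefore $G_0^c\cap\cdots\cap G_{j-1}^c$ is not measurable with respect to the first $jN$ noise coordinates, the first-success index is not a stopping time, and the strong Markov property does not justify $\mu^\bN(G_j\mid G_0^c\cap\cdots\cap G_{j-1}^c)\ge pp'$. Concretely: a failure in block $j'<j$ of the form ``coupled but $\sigma^{\tau_{j'}}\bi\notin A$'' conditions the noise in all later blocks, and there is no a priori reason the conditional chance of a fresh contraction attempt stays bounded below.

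Nor does the truncation you flag as a subsidiary fix repair this. Writing $A_m\supset A$ for the horizon-$m$ truncation, one has $G_j\subset\hat G_j$, hence $\bigcap_j G_j^c\supset\bigcap_j\hat G_j^c$, so a geometric bound on $\mu^\bN\bigl(\bigcap_j\hat G_j^c\bigr)$ does not bound $\mu^\bN\bigl(\bigcap_j G_j^c\bigr)$ from above: the inclusion runs the wrong way. And if you instead work entirely with $\hat G_j$, a success yields only $D(X_{\tau_j+s}^x,X_{\tau_j+s}^y)\le q^s$ for $s\le m$; since \textbf{(LC)} bounds $\diam_D(f_\bi^s(B))$ by the absolute quantity $q^s$, not by $q^s\diam_D(B)$, restarting at $\tau_j+m$ resets the bound to $q^s$ rather than compounding to $q^{m+s}$, so the distance oscillates near $q^m$ and the per-step integrals $\int D^\alpha(X_k^x,X_k^y)\,d\mu^\bN$ never become summable. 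The paper circumvents both issues by proving a self-referential recursion on the partial sums $a_n$ instead of a termwise geometric decay: it decomposes $\cI^n\setminus\pi_n(\Omega)$ by the \emph{finite} first-exit time from the contracting set (an $n$-coordinate-measurable decomposition), bounds each resulting tail by $a_{n-k}\le a_n$, and combines this with a uniform entry estimate into a fixed ball to obtain $a_n\le(\text{geometric sum})+a_n\bigl(1-\tfrac12\delta\nu(B)\bigr)+a_\kappa$, from which $a_n$ is bounded by solving the inequality. If you keep your uniform coupling lemma but replace the renewal step by that recursion, the argument closes.
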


The following property is slightly stronger than the one asserted above.
\begin{itemize}[ leftmargin=1cm ]
\item[\textbf{(CA)}] $(\cF,\mu)$ is \emph{contracting on average} on $(M,D)$ if there is $\lambda\in(0,1)$ such that for every $x,y\in M$, it holds
\[
	\int D(f_i(x),f_i(y))\,d\mu(i)
	\le  \lambda D(x,y).
\]
\end{itemize}
Note that, if $(M,D)$ is bounded, then \textbf{(CA)} implies the assertions in Theorem \ref{teo:sync-on-average}.

It is straightforward to check that for an RDS on a compact metric space and $\mu$ a probability measure on a compact metric space $\cF$, the property \textbf{(CA)} implies that the Markov operator $\bM$ defined in \eqref{cFast} is a contraction in the space of  Borel probability measures in the Wasserstein distance%
\footnote{Given two probability measures $\nu,\nu'$, their \emph{Wasserstein distance} or, sometimes also called, \emph{Hutchinson distance}  is defined by \[d_{\rm H}(\nu,\nu')\eqdef\sup\{\int f\,d\nu-\int f\,d\nu'\colon f\in{\rm Lip}_1(M)\}.\]
}
(see, for example, \cite[Theorem 1]{Men:98}). Hence, the uniqueness of the stationary measure is an immediate consequence of \textbf{(CA)}.

\begin{remark}
While \textbf{(CA)} is a sufficient condition to guarantee the uniqueness of the stationary measure, it is worth noting that it is not a necessary condition. An illustrative example is the system $(\cF,\mu)$ presented in Example \ref{exemplo:Anton}, where it can be readily demonstrated that a unique stationary measure exists (with $\mu$ being a non-degenerate measure on $\cI=\{1,2,3\}$). However, as the system does not satisfy proximality \textbf{(P)}, there is no metric having non-expansive or contractive behavior on average throughout the space.
\end{remark}

\subsection{Limit laws}\label{subsec:INT-2}
In this section, let us assume that $(\cF,\mu)$ has a unique stationary measure $\nu$.
Let us now discuss limit laws for an RDS which are consequences of the local contraction property \textbf{(LC)}. They describe the asymptotic behavior of the sum of results obtained from a large number of trials along a random orbit. That is, for a function $h\colon M\to\bR$, for every $x\in M$ and the associated Markov chain $(X_n^x)_{n\ge0}$ such laws describe the behavior of the sum
\[
S_n^x=\sum_{k=0}^{n-1}h\left(X_k^x\right)
\]
for $n$ large enough.

The strong law of large numbers states that $n^{-1}S_n^x$ tends to the expected value $\nu(h)$. The central limit theorem states that $n^{-1/2}S_n^x$ converges to a normal distribution
 with mean $\nu(h)$ and variance $\sigma^2(h)$. Recall that it allows the application of probabilistic methods that work for normal distributions, to characterize the asymptotic behavior of normalized sums along a random orbit. Finally, the law of the iterated logarithm claims that the standardized sum
\[
\frac{S_n^x-n\nu(h)}{\sqrt{n \sigma^2(h)}}
\]
approaches $\sqrt{2\log \log (n)}$ for $n$ large enough.

Let us formally establish those laws for additive functionals of Markov chains under fairly general hypotheses. Given a function $h\colon M\to\bR$,
\begin{enumerate}[ leftmargin=1.4cm ]
    \item[\textbf{(SLLN)}] $(\cF,\mu)$ satisfies the \emph{strong law of large numbers} for $h$ if for every $x\in M$,
    \[
    \lim_{n\to\infty}\frac{1}{n}\sum_{k=0}^{n-1} h\left(X_k^x\right)
    \overset{a.s.}{=}
    \nu(h)\eqdef\int h\,d\nu
    \quad\text{ as }n\to\infty.
    \]
    \item[\textbf{(CLT)}] $(\cF,\mu)$ satisfies the \emph{central limit theorem} for $h$ if for every $x\in M$,
    \[
    \frac{1}{\sqrt{n}}\sum_{k=0}^{n-1} h\left(X_k^x\right)
    \to \mathcal N(\nu(h),\sigma^2(h))
        \quad\text{ as }n\to\infty,
	\]
where
    \begin{equation}\label{defsigma2}
    \sigma^2(h)
    \eqdef \lim_{n\to\infty}\frac{1}{n}\int_{\cI^{\bN}\times M}\left(\sum_{k=0}^{n-1}
     h\left(X_k^x(\bi)\right)-n\nu(h)\right)^2 \,d\mu^\bN\otimes\nu(\bi,x).
    \end{equation}
    \item[\textbf{(LIL)}] $(\cF,\mu)$ satisfies the \emph{law of the iterated logarithm} for $h$ if for every $x\in M$,
    \[
    \limsup_{n\to\infty}\frac{1}{\sqrt{2n\log \log (n)}}\left(\sum_{k=0}^{n-1} h\left(X_k^x\right)-n\nu(h)\right)\overset{a.s.}{=}\sqrt{\sigma^2(h)}.
    \]
\end{enumerate}

Let us point out that the above limit laws hold for the Markov chain $(X_n^x)_{n\geq 0}$ at any initial point $x$. Sometimes such laws are also called \emph{quenched} laws. Note that, for example, \textbf{(SLLN)} is notably stronger than the Birkhoff ergodic theorem. Indeed, this theorem only implies convergence of $n^{-1}S_n^x(\bi)$ to the expected value $\nu(h)$ for $\mu^{\bN}\otimes\nu$-almost every $(\bi,x)$.

For any IFS that is contracting on average \textbf{(CA)}, the \textbf{(SLLN)} was established in \cite{Elt:87}. Here we extend those results to a general RDS, assuming compactness and local contraction. In another direction of generalizations, Lager\aa s and Stenflo in \cite{LagSte:05} consider contractive IFSs with place-dependent probabilities and establish a CLT.

\begin{theorem}\label{teo3.1-CLTbisneu}
	Assume that $(\cF,\mu)$ is an RDS of continuous maps on a compact metric space $(M,D)$ having the properties \textbf{(P)} and \textbf{(LC)}. Then $(\cF,\mu)$ satisfies the SLLN, the CLT, and the LIL  for every H\"older continuous function $h\colon M\to\bR$.
\end{theorem}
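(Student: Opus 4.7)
The plan is to prove SLLN first as a direct Birkhoff-ergodic argument and then treat CLT and LIL jointly via Gordin's martingale--coboundary decomposition, using the synchronization results (Proposition \ref{prop:sync-exp}, Theorem \ref{teo:sync-on-average}) to transfer both the SLLN and the conditional--variance condition from $\nu$-typical starting points to every $x\in M$.

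\emph{Step 1 (SLLN for continuous test functions).} Since the skew--product $T$ preserves the ergodic measure $\mu^\bN\otimes\nu$, Birkhoff's theorem gives, for any continuous $\phi:M\to\bR$,
\[
\frac{1}{n}\sum_{k=0}^{n-1}\phi(X_k^y(\bi))\longrightarrow\nu(\phi)\quad\text{for $(\mu^\bN\otimes\nu)$-a.e.\ }(\bi,y).
\]
Fixing one $y$ in a full $\nu$-measure set where the convergence holds $\mu^\bN$-a.s., Proposition \ref{prop:sync-exp} and uniform continuity of $\phi$ yield $|\phi(X_k^x(\bi))-\phi(X_k^y(\bi))|\to 0$ $\mu^\bN$-a.s.\ for any $x\in M$; Cesàro summation upgrades SLLN to every $x\in M$. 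Specializing to $\phi=h$ gives the SLLN part of the theorem.

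\emph{Step 2 (Poisson equation and martingale decomposition).} For $\alpha$-H\"older $h$ with constant $L$, Theorem \ref{teo:sync-on-average} gives
\[
\sum_{n=0}^\infty \operatorname{osc}(\bM^n h)\le L\sum_{n=0}^\infty \sup_{x,y\in M}\int D^\alpha(X_n^x,X_n^y)\,d\mu^\bN(\bi)<\infty,
\]
and $\nu(\bM^n h)=\nu(h)$ implies $\|\bM^n h-\nu(h)\|_\infty\le \operatorname{osc}(\bM^n h)$. Hence $g(x):=\sum_{n=0}^\infty [\bM^n h(x)-\nu(h)]$ converges uniformly to a bounded continuous solution of the Poisson equation $g-\bM g = h-\nu(h)$. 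Telescoping yields
\[
\sum_{k=0}^{n-1}\bigl[h(X_k^x)-\nu(h)\bigr] = g(x) - g(X_n^x) + \sum_{k=0}^{n-1}M_k,\qquad M_k := g(X_{k+1}^x) - \bM g(X_k^x).
\]
Under $\bP_x:=\delta_x\otimes\mu^\bN$, the sequence $(M_k)_{k\ge0}$ is a bounded martingale difference sequence adapted to $\mathcal F_k:=\sigma(I_1,\ldots,I_{k+1})$, and the coboundary term $g(x)-g(X_n^x)$ is bounded by $2\|g\|_\infty$.

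\emph{Step 3 (Martingale CLT and LIL).} The conditional variance is $\bE_{\bP_x}[M_k^2\mid\mathcal F_k]=V(X_k^x)$, where $V:=\bM(g^2)-(\bM g)^2$ is bounded continuous; Step 1 applied to $\phi=V$ then gives $n^{-1}\sum_{k=0}^{n-1}V(X_k^x)\to\nu(V)$ $\bP_x$-a.s., and an invariance computation using stationarity of $(M_k)$ under $\bP_\nu$ together with boundedness of the coboundary identifies $\nu(V)$ with $\sigma^2(h)$ of \eqref{defsigma2}. Boundedness of $M_k$ secures the Lindeberg condition trivially; Brown's martingale CLT and Stout's martingale LIL then yield the assertions for $n^{-1/2}\sum M_k$ and $(2n\log\log n)^{-1/2}\sum M_k$ under $\bP_x$ for every $x$. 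The coboundary contributes $O(1)$, negligible under both normalizations, and Slutsky's lemma transfers the limit laws to the original sums.

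The main obstacle is ensuring the conditional--variance condition holds \emph{quenched}, i.e.\ under $\bP_x$ for an arbitrary starting point $x$, rather than only under the stationary law $\bP_\nu$ (for which the martingale differences are stationary and ergodic and the classical theorems apply directly). Proposition \ref{prop:sync-exp} is the crucial input: it is what permits Step 1 to deliver the SLLN pointwise in the initial condition, and hence the variance condition for every $x$. Without synchronization, one would obtain the limit laws only on a $\nu$-measure-one set of starting points, which is strictly weaker than the theorem's assertion.
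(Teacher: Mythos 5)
Your proposal is correct and takes a genuinely different route from the paper. The paper's proof of the CLT invokes Derriennic--Lin (2003) after checking the uniform bound $\sup_x\bigl|\sum_{k=1}^n P^kh(x)-n\nu(h)\bigr|\le C$ (their display \eqref{bound:operatorL2}, where $P$ is the transfer operator), then extends from $\nu$-a.e.\ $x$ to all $x$ via characteristic functions and synchronization; the LIL is obtained by embedding the chain in a two-sided stationary sequence and verifying the summability criterion of Zhao--Woodroofe (2008); and the SLLN is established by a separate argument showing that the set of $(\bi,x)$ where both Birkhoff convergence and local contraction hold is $T$-invariant with open fibers, then invoking Malicet's Proposition~4.2. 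You instead make the martingale approximation fully explicit via the Poisson equation $g-Pg=h-\nu(h)$ (the same summable-oscillation estimate, a consequence of Theorem \ref{teo:sync-on-average}, underlies both arguments) and then apply Brown's martingale CLT and Stout's martingale LIL directly, and you obtain the SLLN by the simpler Birkhoff--plus--Ces\`aro transfer. Your route is more self-contained and unifies the CLT and LIL through one decomposition, at the cost of verifying the conditional-variance and Lindeberg conditions yourself; the paper's route is more modular, outsourcing these verifications to the cited theorems. Two small corrections to your write-up: the operator you denote $\bM$ is the Markov operator on measures in the paper's notation, so the transfer operator acting on functions should be written $P$ throughout; and the conditional-variance identity should read $\bE_{\bP_x}[M_k^2\mid\mathcal F_{k-1}]=V(X_k^x)$ with $\mathcal F_{k-1}=\sigma(I_1,\ldots,I_k)$, since $M_k$ is $\mathcal F_k$-measurable and conditioning on $\mathcal F_k$ would return $M_k^2$ itself.
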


\subsection{Results for the case of IFSs on the circle}\label{sec:IFSoncircle}
In the remainder of this section, let us consider the particular case of an IFS with probabilities on $\bS^1$. Let $d$ be the usual metric on $\bS^1$, that is,
\begin{equation}\label{def:metusualcircle}
    d(x,y)\eqdef\min\{ |x-y|,1-|x-y|\}.
\end{equation}
Consider a finite collection of circle homeomorphisms $\cF=\{f_0,\ldots,f_{N-1}\}$. Let $\cI=\{0,\ldots,N-1\}$ and consider the Bernoulli probability measure $\mu^\bN$ induced by a probability vector $(p_0,\ldots,p_{N-1})$, that is, $\mu(\{i\})=p_i$. We will always assume that $\mu$ is non-degenerate, that is, that $p_0,\ldots, p_{N-1}>0$.
Note that the IFS $\cF$ on $\bS^1$ is never contracting on average with respect to the standard distance
(compare Figure \ref{fig.3} for relevant examples).

\begin{figure}[h]
 \begin{overpic}[scale=.5]{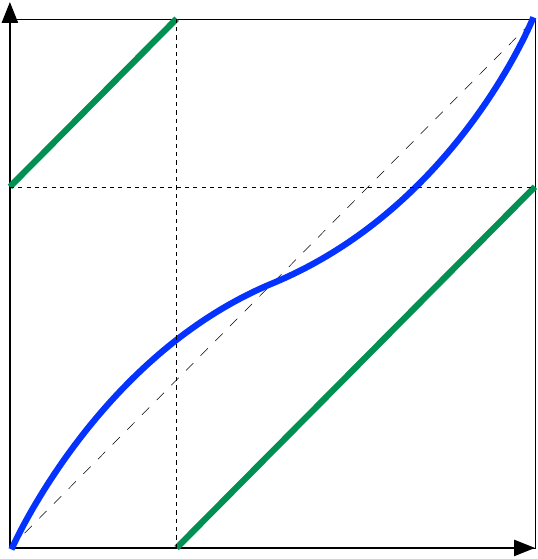}
	\put(97,67){$\textcolor{forestgreen}{f_0}$}
	\put(97,93){$\textcolor{blue}{f_1}$}
 \end{overpic}
 \hspace{2cm}
  \begin{overpic}[scale=.5]{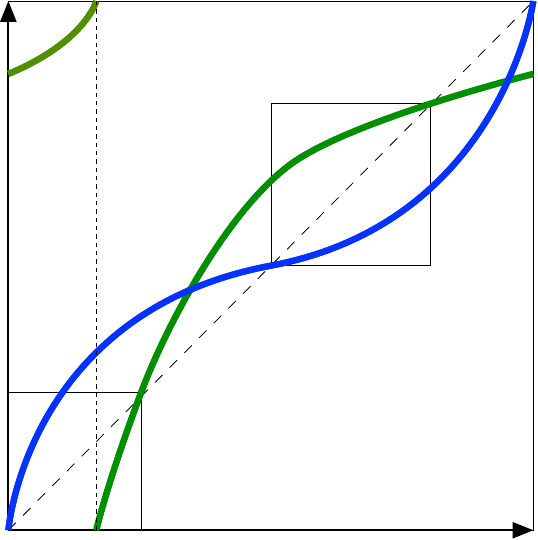}
	\put(101,83){$\textcolor{forestgreen}{f_0}$}
	\put(101,93){$\textcolor{blue}{f_1}$}
 \end{overpic}
  \caption{IFS $\cF$ which is forward minimal (left figure) and IFS $\cF$ which fails to be forward and backward minimal (right figure)}
 \label{fig.3}
\end{figure}

We will assume the following general hypothesis that is closely related to the local contraction property \textbf{(LC)} (see Remark \ref{reminterplay}):

\medskip

\begin{itemize}[ leftmargin=1cm ]
\item[\textbf{(H)}] there does not exist a probability measure which is \emph{invariant} by every map in $\cF$, that is, a measure $m$ which satisfies $f_\ast m=m$ for every $f\in\cF$.
\end{itemize}

\medskip

\begin{remark}\label{reminterplay}
The interplay of the concepts in \textbf{(H)} and \textbf{(P)} in a fairly general setting of random walks generated by a group of circle homeomorphisms is studied in \cite{Mal:17}. There Malicet adapts ideas of Crauel \cite{Cra:90} and Avila and Viana \cite{AviVia:10}, which in turn extend the so-called invariance principle (a contraction-invariance dichotomy) introduced already in \cite{Led:86}. By \cite[Theorem A]{Mal:17},
assuming that $\cF$ is an IFS of homeomorphism on $\bS^1$, hypothesis \textbf{(H)} implies \textbf{(LC)} for any non-degenerate probability vector. Moreover, assuming that $\cF$ satisfies \textbf{(LC)}, by \cite[Proposition 4.18]{Mal:17}, \textbf{(P)} and \textbf{(S)} are equivalent.
\end{remark}

The following is an immediate consequence of Theorem \ref{teo3.1-CLTbisneu}. It extends \cite{SzaZdu:21} which uses Malicet's assertion about \textbf{(LC)} in \cite{Mal:17}  to prove a CLT. In a similar spirit, \cite{CzuSza:20} prove a CLT for certain IFSs of homeomorphisms of the interval.

\begin{corollary}\label{cor-CLT}
Assume that $\cF=\{f_0,\ldots,f_{N-1}\}$ is an IFS of homeomorphisms on $(\bS^1,d)$ satisfying \textbf{(H)} and \textbf{(P)}. Then for any non-degenerated probability measure $\mu$ on $\cI=\{0,\ldots,N-1\}$,
the IFS with probabilities $(\cF,\mu)$ satisfies the SLLN, the CLT, and the LIL for every H\"older continuous function $h\colon\bS^1\to\bR$.
\end{corollary}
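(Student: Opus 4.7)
The plan is to reduce Corollary \ref{cor-CLT} directly to Theorem \ref{teo3.1-CLTbisneu}, whose conclusions (SLLN, CLT, LIL for H\"older $h$) are exactly those claimed. Theorem \ref{teo3.1-CLTbisneu} applies to any RDS of continuous maps on a compact metric space having properties \textbf{(P)} and \textbf{(LC)}. The hypothesis \textbf{(P)} is already given in the corollary, and the underlying space $(\bS^1,d)$ is compact, so the only missing ingredient is the local contraction property \textbf{(LC)}.

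First I would verify \textbf{(LC)} by invoking Malicet's dichotomy from \cite[Theorem A]{Mal:17}, which is precisely the tool cited in Remark \ref{reminterplay} for this passage between the circle-specific hypothesis \textbf{(H)} and the abstract property \textbf{(LC)}. Since $\cF=\{f_0,\ldots,f_{N-1}\}$ is an IFS of homeomorphisms of $\bS^1$, $\mu$ is non-degenerate (all weights $p_i>0$), and by \textbf{(H)} no Borel probability measure on $\bS^1$ is simultaneously invariant by every $f_i\in\cF$, Malicet's theorem produces a rate $q\in(0,1)$ such that \textbf{(LC)} holds. No compactness or regularity beyond continuity is needed here, since homeomorphisms of $\bS^1$ suffice in Malicet's framework.

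Having established both \textbf{(P)} and \textbf{(LC)} on the compact metric space $(\bS^1,d)$, I would then apply Theorem \ref{teo3.1-CLTbisneu} to the H\"older continuous function $h\colon\bS^1\to\bR$. This yields the three limit laws for the Markov chain $(X_n^x)_{n\ge 0}$ starting at every $x\in\bS^1$, with variance $\sigma^2(h)$ given by \eqref{defsigma2}, exactly as in the statement. Uniqueness of the $\mu$-stationary measure $\nu$ needed to make sense of $\nu(h)$ is supplied by Proposition \ref{prop:sync-exp} (itself a consequence of \textbf{(P)} and \textbf{(LC)}).

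No real obstacle is anticipated: the proof is essentially a bookkeeping reduction, and the whole analytic content has been outsourced to \cite{Mal:17} on the one hand and to Theorem \ref{teo3.1-CLTbisneu} on the other. The only point that would merit explicit mention is to record non-degeneracy of $\mu$ as part of applying Malicet's result, since \textbf{(LC)} is sensitive to this hypothesis (as noted just after the statement of \textbf{(LC)}).
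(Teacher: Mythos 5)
Your reduction is exactly the paper's own: the corollary is stated right after Remark \ref{reminterplay}, which records that by \cite[Theorem A]{Mal:17} hypothesis \textbf{(H)} implies \textbf{(LC)} for any non-degenerate probability vector when $\cF$ is an IFS of circle homeomorphisms, and the paper then declares the corollary an immediate consequence of Theorem \ref{teo3.1-CLTbisneu}. Your proposal is correct and takes essentially the same route.
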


\begin{remark}[The role of minimality]\label{remminimality}
Another important key topological property, which is often assumed in this context, is minimality. Recall that the IFS $\cF$ acts \emph{forward minimally} if for every open interval $I\subset\bS^1$ and $x\in\bS^1$ there exist $\bi\in\cF^{\bN}$ and $n\in\bN$ such that $X_n^x(\bi)\in I$. Note that $\cF$ is forward minimal if and only if there is no nonempty closed set $A\subset\bS^1$, $A\ne\bS^1$, satisfying $f(A)\subset A$ for every $f\in \cF$.
Let us denote $\cF^{-1}\eqdef\{f_i^{-1}\}$, where $f_i\in\cF$. The IFS $\cF$ is \emph{backward minimal} if $\cF^{-1}$ is forward minimal. The existence of one (forward) minimal map in $\cF$ implies that the IFS acts forward minimally. Figure \ref{fig.3} (right figure) depicts an example for which $\cF$ is neither forward nor backward minimal.
Assuming minimality, a CLT and a LIL were obtained in \cite{SzaZdu:21,GabrielaTomasz:2022}.
On the other hand, assuming forward and backward minimality guarantees the syn\-chro\-nization-invariance dichotomy following Antonov \cite{Antonov:1984} (see \cite{GelSte:17} and references therein and also \cite{Mal:17} in a much wider setting). As one consequence, assuming forward and backward minimality, hypothesis \textbf{(H)} implies that $\cF$ has a topological factor IFS satisfying \textbf{(S)} (and hence \textbf{(P)}) (see \cite[Corollary 1]{GelSte:17}).
Here, we extend those results by proving limit laws in a context that is not \emph{a priori} minimal.
\end{remark}

Recall that for an IFS with probabilities satisfying \textbf{(CA)} (for any metric $D$), the stationary measure $\nu$ is unique (\cite[Theorem 2.1]{BDEG88}) and the distribution of $X_n^x$ converges exponentially fast to $\nu$ in the Prokhorov metric as $n\to\infty$. Moreover, this convergence is uniform on $x$ (\cite[Corollary 2.1]{JarnerTweedie}) and an Ergodic Theorem (see \cite[page 484]{Elt:87}), a SLLN and a CLT (see \cite[Theorem 5.1]{Pei:93}) hold.

Using metric change techniques, we derive the following result in the context of IFS with probabilities on the circle.

\begin{theorem}\label{teo3.1-CLT}
Assume that $\cF=\{f_0,\ldots,f_{N-1}\}$ is an IFS of $C^{1}$-diffeomorphisms on $(\bS^1,d)$ satisfying \textbf{(H)} and \textbf{(P)}. Then for every non-degenerate probability measure $\mu$ on $\cI=\{0,\ldots,N-1\}$ , $(\cF,\mu)$ has a unique stationary probability measure. Moreover, the SLLN and the CLT are satisfied for every H\"older function $h\colon\bS^1\to\bR$.
\end{theorem}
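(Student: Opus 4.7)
The plan is to combine the qualitative synchronization results of the earlier sections with a metric-change argument that upgrades the exponential synchronization on average from Theorem~\ref{teo:sync-on-average} to a genuine contraction on average \textbf{(CA)} in a metric equivalent to $d$, so that classical limit laws for \textbf{(CA)}-IFSs can be invoked. By Remark~\ref{reminterplay} (that is, by \cite[Theorem~A]{Mal:17}), hypothesis \textbf{(H)} already forces \textbf{(LC)} for an IFS of circle homeomorphisms; combined with the assumed \textbf{(P)}, Proposition~\ref{prop:sync-exp} yields exponential synchronization together with the existence of a unique $\mu$-stationary probability measure $\nu$, settling the first assertion of the theorem.

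For $\alpha\in(0,1]$ to be chosen, I would set
\[
    D(x,y)\eqdef\sum_{k=0}^{\infty}\int d^{\alpha}(X_k^x(\bi),X_k^y(\bi))\,d\mu^{\bN}(\bi).
\]
Theorem~\ref{teo:sync-on-average} ensures that $D$ is finite and uniformly bounded in $x,y$, and $D$ inherits the metric axioms from $d^{\alpha}$. The $k=0$ term gives $D(x,y)\ge d^{\alpha}(x,y)$, and by shift-invariance of $\mu^{\bN}$ a Markov-type computation yields the telescoping identity
\[
    \int D(f_i(x),f_i(y))\,d\mu(i)\;=\;D(x,y)-d^{\alpha}(x,y).
\]
To extract \textbf{(CA)} from this identity one needs the matching upper bound $D(x,y)\le C' d^{\alpha}(x,y)$, and this is where the $C^1$-hypothesis enters. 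Each $f_i$ is Lipschitz on $\bS^1$ with a common constant $L=\max_i\sup_{\bS^1}|f_i'|$, so that $d(X_k^x(\bi),X_k^y(\bi))\le L^{k}d(x,y)$ pointwise in $\bi$. Interpolating this Lipschitz bound with the average contraction $\int d^{\alpha}(X_k^x,X_k^y)\,d\mu^{\bN}\le Cq^{k\alpha}$ from Theorem~\ref{teo:sync-on-average}---for instance by splitting $\cI^{\bN}$ into the ``good'' set of sequences on which the local contraction has already taken effect by time $k$ and its ``bad'' complement of exponentially small $\mu^{\bN}$-measure---and choosing $\alpha$ sufficiently small, one derives the desired upper bound, so that $D$ is equivalent to $d$ and \textbf{(CA)} holds with contraction factor $\lambda=1-1/C'<1$.

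With \textbf{(CA)} in force for the equivalent metric $D$, the SLLN follows from Elton's theorem \cite{Elt:87} and the CLT from the classical result of \cite{Pei:93}, both applied to $D$-Lipschitz observables. As $D$ and $d$ are H\"older equivalent, every $d$-H\"older observable is $D$-H\"older, and a standard approximation in the \textbf{(CA)} setting extends the SLLN and CLT from $D$-Lipschitz to all $d$-H\"older functions $h\colon\bS^1\to\bR$, completing the proof.

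The main obstacle is the upper equivalence $D\le C' d^{\alpha}$: a naive interpolation between the Lipschitz and the average contraction bounds yields only $D\lesssim d^{\alpha\log(1/q)/\log(L/q)}$, whose exponent is strictly smaller than $\alpha$ whenever $L>1$, which is not sharp enough to recover \textbf{(CA)} from the telescoping identity with the same exponent $\alpha$. Closing this gap is the genuine content of the metric-change technique and is where $C^1$-regularity must be exploited beyond merely producing a Lipschitz constant---for instance through a refined control on the $\mu^{\bN}$-measure of sequences whose contraction has not yet kicked in by time $k$.
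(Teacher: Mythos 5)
Your overall strategy is the right one, and it agrees with the paper's: obtain contraction on average \textbf{(CA)} in a metric equivalent to $d$, and then hand the limit laws to the classical machinery of \cite{Elt:87} and \cite{Pei:93}. Your handling of the first assertion (uniqueness of the stationary measure via Remark~\ref{reminterplay} and Proposition~\ref{prop:sync-exp}) is also correct, and is in fact slightly more self-contained than the paper's route through \cite[Theorem~2.1]{BDEG88}.

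The gap, however, is exactly where you flag it, and it is not a small one. To make your telescoping metric
\[
D(x,y)=\sum_{k\ge 0}\int d^{\alpha}(X_k^x(\bi),X_k^y(\bi))\,d\mu^{\bN}(\bi)
\]
satisfy \textbf{(CA)}, the identity $\int D(f_ix,f_iy)\,d\mu(i)=D(x,y)-d^{\alpha}(x,y)$ needs to be supplemented by the upper equivalence $D\le C'd^{\alpha}$, which unwinds to
\[
\sum_{k\ge 0}\int d^{\alpha}(X_k^x(\bi),X_k^y(\bi))\,d\mu^{\bN}(\bi)\le C'\,d^{\alpha}(x,y).
\]
But for a geometric series on the left to have a ratio bounded in terms of $d^{\alpha}(x,y)$, one in effect needs $\sup_{x\neq y}\int d^{\alpha}(X_{n_0}^x,X_{n_0}^y)\,d\mu^{\bN}/d^{\alpha}(x,y)<1$ for some $n_0$, which is precisely \textbf{(CA)} for $d^{\alpha}$ (for the $n_0$-step IFS). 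So the telescoping construction presupposes the very estimate it was meant to produce; it cannot be closed by the interpolation you attempt, which as you correctly observe only produces a strictly smaller H\"older exponent. Theorem~\ref{teo:sync-on-average} gives a bound that is uniform in $x,y$ but does not shrink with $d(x,y)$, and Proposition~\ref{prop:sync-exp} yields pathwise constants $C(\bi)$ whose integrability is not controlled, so neither ingredient supplies the missing proportionality in $d^{\alpha}(x,y)$.

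The paper closes this gap by an external input: \cite[Theorem~1.2]{GelSal:} (the companion paper ``Contracting on average iterated function systems by metric change'') proves directly that for an IFS of $C^1$ circle diffeomorphisms satisfying \textbf{(H)} and \textbf{(P)} there are $\alpha,\lambda\in(0,1)$ with $\int d^{\alpha}(f_i(x),f_i(y))\,d\mu(i)\le\lambda\,d^{\alpha}(x,y)$; that is, \textbf{(CA)} holds already for the power metric $d^{\alpha}$, with no auxiliary metric needed. That result uses $C^1$-regularity in an essential way (roughly, via the mean value theorem it reduces to showing $\sup_x\int|(f_\bi^n)'(x)|^{\alpha}\,d\mu^{\bN}<1$ for suitable $n,\alpha$, which is where the genuine work lies). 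Once \textbf{(CA)} for $d^{\alpha}$ is in hand, the remainder of your argument is sound; the paper additionally uses Jensen's inequality to pass from $d^{\alpha}$ to $d^{\alpha\beta}$ so that an arbitrary $d$-H\"older observable of exponent $\beta$ becomes Lipschitz in a \textbf{(CA)} metric, and then invokes \cite[Theorem~5.1]{Pei:93}. So: keep your framing and your use of Proposition~\ref{prop:sync-exp}, but replace the telescoping construction by a citation to (or an independent proof of) the metric-change result of \cite{GelSal:}.
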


The assertion in Theorem \ref{teo3.1-CLT} about the uniqueness of the stationary measure is not new. To the best of our knowledge, a first reference under the same hypotheses is in \cite[Proposition 5.5.]{Deroin-Navas-Klepstyn:2007}. Furthermore, from results obtained in \cite{Mal:17}, we have that \textbf{(H)} and \textbf{(P)} imply synchronization and so there is a unique closed set which is simultaneously invariant by each map in $\cF$, therefore there is a unique stationary probability measure.
SLLN is then an immediate consequence of the Breiman theorem \cite{Bre:60}.
Theorem \ref{teo3.1-CLT} provides new sufficient conditions for the CLT to hold. It complements \cite[CLT Theorem 9]{SzaZdu:21} which is stated for IFS of circle homeomorphisms and replacing hypothesis \textbf{(P)} by the hypothesis that $(\cF,d)$ acts forward minimally.

\subsection{Large deviations of synchronization rates and Lyapunov exponents: IFSs on $\bS^1$}\label{subsec-INT-LD}

In this section, we return to the study of synchronization. We will consider the more specific case of an IFS with probabilities of continuous maps on $\bS^1$.
Applying Proposition \ref{prop:sync-exp}, we derive (in fact, exponentially fast) synchronization \textbf{(S)}. We will investigate its rates in more detail.

Synchronization has been studied in a wide range of contexts, including more general group actions of (orientation preserving) circle homeomorphisms \cite{Ghys:1999} or minimal IFSs (see \cite[Section 3.5.1]{Navas:2011}). By \cite[Theorem E]{Mal:17}, considering an IFS of circle homeomorphisms and assuming \textbf{(H)}, proximality \textbf{(P)} (and hence synchronization \textbf{(S)}) is equivalent to \emph{exponential synchronization}, that is, for every $x,y\in\bS^1$ and almost every $\bi\in\cI^{\bN}$ the sequence of numbers
\[
	 \big(d(X_n^x(\bi),X_n^y(\bi))\big)_{n\geq 0}
\]	
converges to $0$ exponentially fast as $n\to\infty$. In general, the rate of convergence depends on $x,y$, and $\bi$. Theorem \ref{teo:sync-on-average} asserts synchronization \emph{on average} with \emph{uniform} exponential decay rate. Below, we state large deviation results to estimate how fast those rates are approached.

Before stating these results, recall that the exponential rate of synchronization is intimately related to Lyapunov exponents, in particular in the context of diffeomorphisms. Let $\cF$ be an IFS of $C^1$-diffeomorphisms on $\bS^1$. Given a probability measure $\mu$ and $\nu$ it's associated (unique, by Theorem \ref{teo3.1-CLT}) stationary probability measure. By Birkhoff ergodic theorem, for $(\mu^\bN\otimes \nu)$-almost every $(\bi,x)\in\cI^\bN\times\bS^1$,
\[
	\lim_{n\to\infty}\frac{1}{n}\log|(f_{\bi}^n)'(x)|
	=\gamma(\mu)
	\eqdef \int\log|(f_{i_1})'(x)|\,d(\mu^{\bN}\otimes \nu)(\bi,x),
\]
where the number $\gamma(\mu)$ is also called \emph{fiber-Lyapunov exponent} of the IFS $(\cF,\mu)$. Note that if \textbf{(H)} holds, then $\gamma(\mu)<0$ (see, for example, \cite[Theorem F and Proposition 3.3]{Mal:17}).
We will also describe the deviation of finite-time Lyapunov exponents from $\gamma(\mu)$.

\begin{theorem}[Large deviations of Lyapunov exponents]\label{teolargedesvios}
Assume that $\cF=\{f_0,\ldots,f_{N-1}\}$ is an IFS of $C^{1+\beta}$ diffeomorphisms, for some $\beta>0$, on $\bS^1$ satisfying \textbf{(H)} and \textbf{(P)}. Then for every non-degenerate probability measure $\mu$ on $\cI=\{0,\ldots,N-1\}$ for every $x,y\in\bS^1$, $x\ne y$,
\[
    \lim_{n\to\infty}\frac{1}{n}\int\log d(X_n^x(\bi),X_n^y(\bi))\,d\mu^{\bN}(\bi) =\lim_{n\to\infty}\frac{1}{n}\int\log|(f_{\bi}^n)'(x)|\,d\mu^{\bN}(\bi)=\gamma(\mu)<0.
\]
Moreover, there exist positive constants $h,\varepsilon_0,c$ so that for all $x\in \mathbb{S}^1$, $\varepsilon\in(0,\varepsilon_0)$, and $n\in\mathbb{N}$
\[
	\mu^{\bN}\Big(\bi\in\cI^\bN\colon
	\big|\frac1n\log|(f_{\bi}^n)'(x)|-\gamma(\mu)\big|>\varepsilon\Big)
	\leq c e^{-nh\varepsilon^2},
\]
and for all $y\in\bS^1$, $y\neq x$,
\[
    \mu^{\bN}\left( \bi\in\cI^\bN\colon
    \Big|\frac1n\log \frac{d(X_n^x(\bi),X_n^y(\bi))}{d(x,y)}-\gamma(\mu)\Big|>\varepsilon\right)
    \le ce^{-n h\varepsilon^2/4}.
    \]
\end{theorem}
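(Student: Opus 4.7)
The plan is to split the statement into three parts and prove them in order: (a) the two limits and the negativity of $\gamma(\mu)$, (b) the Gaussian concentration for $\frac{1}{n}\log|(f_\bi^n)'(x)|$, and (c) the corresponding bound for the synchronization rate, deduced from (b) by a bounded distortion argument enabled by $C^{1+\beta}$.

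For (a), observe that $\psi(y)\eqdef\int\log|f_i'(y)|\,d\mu(i)$ is $\beta$-H\"older and, by the Markov property,
\[
    \int\log|(f_\bi^n)'(x)|\,d\mu^\bN(\bi)=\sum_{k=0}^{n-1}\int\psi\,dP_k^x,
\]
where $P_k^x$ is the law of $X_k^x$. Theorem \ref{teo:sync-on-average} forces $P_k^x\to\nu$ in Wasserstein distance, exponentially fast and uniformly in $x$; since $\psi$ is H\"older this gives $\int\psi\,dP_k^x\to\gamma(\mu)$ uniformly, and Ces\`aro averaging yields the Lyapunov limit. For the $d$-limit, the mean value theorem along the shortest arc from $x$ to $y$ (well-defined once $d(X_n^x,X_n^y)<\tfrac12$, which holds eventually by Proposition \ref{prop:sync-exp}) gives
\[
    \log d(X_n^x(\bi),X_n^y(\bi))-\log d(x,y)-\log|(f_\bi^n)'(x)|=\eta_n(\bi,x,y),
\]
with $|\eta_n|\leq L\sum_{k=0}^{n-1}d(X_k^x,X_k^y)^\beta$ by the $\beta$-H\"older regularity of $\log|f'_i|$ and order-preservation of the short arc by orientation-preserving circle homeomorphisms. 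Theorem \ref{teo:sync-on-average} with $\alpha=\beta$ bounds $\bE[|\eta_n|]$ uniformly in $n$, so the two Ces\`aro limits coincide; the sign $\gamma(\mu)<0$ is \cite[Theorem F and Proposition 3.3]{Mal:17}.

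For (b), set $\Phi(\bi,x)\eqdef\log|f'_{i_1}(x)|-\gamma(\mu)$, a bounded $\beta$-H\"older mean-zero observable on the skew product $T$. Upgrading the exponential Wasserstein contraction of Theorem \ref{teo:sync-on-average} to exponential decay of correlations against H\"older observables---standard when the driving maps are themselves H\"older---I would solve the Poisson equation $\Phi=g-g\circ T+m$ with $g$ bounded H\"older and $m$ a bounded martingale difference for the natural filtration on $\cI^\bN$. Azuma--Hoeffding applied to $M_n\eqdef\sum_{k=0}^{n-1}m\circ T^k$ gives $\mu^\bN(|M_n|>n\varepsilon/2)\leq 2e^{-n\varepsilon^2/(8\|m\|_\infty^2)}$; absorbing the uniformly bounded coboundary $g\circ T^n-g$ into the prefactor $c$ by restricting $\varepsilon\in(0,\varepsilon_0)$ for some small $\varepsilon_0$ yields the first bound with $h=1/(8\|m\|_\infty^2)$.

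For (c), the decomposition in (a) gives
\[
    \Big|\tfrac1n\log\tfrac{d(X_n^x,X_n^y)}{d(x,y)}-\gamma(\mu)\Big|\leq\Big|\tfrac1n\log|(f_\bi^n)'(x)|-\gamma(\mu)\Big|+\tfrac{|\eta_n|}{n},
\]
so the left-hand side exceeds $\varepsilon$ only if one of the two terms on the right exceeds $\varepsilon/2$. The first option has $\mu^\bN$-measure at most $ce^{-nh(\varepsilon/2)^2}=ce^{-nh\varepsilon^2/4}$ by (b). For the second, $|\eta_n|\le LS$ with $S\eqdef\sum_{k\ge 0}d(X_k^x,X_k^y)^\beta$ pointwise; splitting $S$ at index $N=N(t)$ into an initial segment (deterministically at most $(N+1)(\tfrac12)^\beta$, which is $\leq t/2$ for $N\approx t\cdot 2^{\beta-1}$) and a tail whose expectation Theorem \ref{teo:sync-on-average} bounds by $c'q^{N\beta}$, a Markov estimate gives $\mu^\bN(S>t)\leq c''e^{-\kappa t}$ for a positive constant $\kappa$. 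Hence $\mu^\bN(|\eta_n|>n\varepsilon/2)\leq c''e^{-\kappa' n\varepsilon}$, which is dominated by $e^{-nh\varepsilon^2/4}$ for $\varepsilon\in(0,\varepsilon_0)$ with $\varepsilon_0$ small enough, and summing the two contributions yields the claimed bound after readjusting $c$. The main obstacle I foresee is step (b): upgrading Wasserstein contraction into H\"older correlation decay so as to produce a bounded H\"older solution of the Poisson equation---and with it a martingale with bounded increments---is where the $C^{1+\beta}$ hypothesis becomes essential (as opposed to merely $C^1$), and is the technical heart of the argument.
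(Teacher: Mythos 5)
Your proposal is sound in outline but takes a genuinely different route for the central concentration bound, so the comparison is worth recording. The paper introduces the Markov kernel $K$ on $\cI\times\bS^1$ and the Laplace--Markov operator $\mathcal{Q}$, proves in Proposition~\ref{prop2} that $\mathcal{Q}$ is simple and quasi-compact on the H\"older algebra $\mathcal{H}_\alpha(\cI\times\bS^1)$, and then invokes \cite[Theorem~4.4]{DuaKle:17} as a black box to get the Gaussian tail for $\frac1n\log|(f_\bi^n)'(x)|$. You instead propose to use the same spectral gap to solve the Poisson equation $g-\mathcal{Q}g=\phi-\gamma(\mu)$ in $\mathcal{H}_\alpha$, thereby writing the additive functional as a bounded-increment martingale plus a bounded coboundary, and then apply Azuma--Hoeffding. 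Both routes have the same technical heart, which you correctly identify: exponential convergence of $\mathcal{Q}^n$ on H\"older observables (the role of $C^{1+\beta}$ is only to ensure $\phi=\log|f'|$ is H\"older). Your approach buys a self-contained, elementary derivation of the Gaussian tail, at the cost of reproving a spectral-gap input that the paper also must establish; the paper's approach is shorter given the external reference. For part (a), you derive the limits by Ces\`aro averaging the Wasserstein contraction, whereas the paper obtains them a posteriori from the large-deviation bound plus Borel--Cantelli; both work. For part (c), you bound $\mu^\bN(|\eta_n|>n\varepsilon/2)$ via an exponential tail for $S=\sum_k d(X_k^x,X_k^y)^\beta$, while the paper uses Chebyshev on the distortion ratio $\max_{z,w}|(f_\bi^n)'(z)/(f_\bi^n)'(w)|$ and a polynomial bound on its expectation; your estimate is a little cleaner, since a pure exponential $c''e^{-\kappa' n\varepsilon}$ is trivially absorbed into $ce^{-nh\varepsilon^2/4}$ once $\varepsilon\le 4\kappa'/h$, with no polynomial prefactor to argue away. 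One imprecision to fix: absorbing the coboundary $g(Z_0)-g(Z_n)$ is not achieved merely by shrinking $\varepsilon_0$; for $n\varepsilon<2\|g\|_\infty$ you should fall back on the trivial bound $\mu^\bN(\cdot)\le 1\le ce^{-nh\varepsilon^2}$, which holds after enlarging $c$ to at least $e^{2h\|g\|_\infty\varepsilon_0}$---a routine but necessary adjustment.
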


To prove Theorem \ref{teolargedesvios}, we study Markov systems and apply a result from \cite{DuaKle:17}, for which we need to consider H\"older potentials. For this reason, we additionally require $C^{1+\beta}$ regularity of the maps of the IFS.

\subsection{Structure of the paper}

In Sections \ref{sec:partIIneu}, we study the consequences of the property of local contraction \textbf{(LC)} and present the proofs of Proposition \ref{prop:sync-exp} and Theorem \ref{teo:sync-on-average}.
In Section \ref{sec33333}, we first invoke Theorem \ref{teo:sync-on-average} to conclude the \textbf{(CA)} property. This, in turn, implies the convergence of the arithmetic average of the transfer operator. Utilizing techniques from \cite{DerrLin:2003}, we prove the (CLT). Moreover, this allows us to verify a sufficient condition provided by \cite{DerrLin:2003} for establishing the (LIL). To prove the (SLLN), we investigate again finer consequences of the property \textbf{(LC)}. In Section \ref{sec:proooof}, we focus on the specific case of an IFS with probabilities on $\bS^1$. First, as a result of \cite{GelSal:}, the structure of the circle and the hypotheses \textbf{(H)} and \textbf{(P)} guarantee property \textbf{(CA)} with respect to the metric $d^\alpha$ for some $\alpha\in(0,1)$. Hence, one can invoke \cite{Pei:93} and conclude the limit laws SLLN and CLT asserted by Theorem \ref{teo3.1-CLT}. To prove the large deviation results in Theorem \ref{teolargedesvios}, we study Markov systems and invoke a result from \cite{DuaKle:17}.

\section{Systems satisfying \textbf{(LC)}: Linear cocycles}\label{sec:LC}

Let $\cI$ be a separable complete metric space and $\mu$ a Borel measure on $\cI$. Let $A\colon \cI\to\GL(d)$ be a measurable function such that $\log^+\lVert A^\pm\rVert\in L^1(\mu)$. Consider the one-sided
Bernoulli shift $\sigma$ on $(\cI^\bN,\mu^\bN)$. The \emph{linear cocycle} associated with $\mu$ is skew-product map $T:\cI^\bN\times\bR^d\to\cI^\bN\times\bR^d$ given by
\[
	T(\bi,v)\eqdef\big(\sigma(\bi),A(i_1)v\big).
\]
For $\bi=(i_1,i_2,\ldots)\in\cI^{\bN}$, denote
\[
A^n_{\bi}\eqdef A(i_n)\cdots A(i_1)\in \GL(d).
\]
Instead of the action of the matrices on $\bR^d$, we study their induced projective action on $\bP^{d-1}$. For $i\in\cI$, let $f_i$ be the projective map induced by the matrix $A(i)$.
\[
	f_i\colon\bP^{d-1}\to\bP^{d-1},
	\quad f_i(x)\eqdef\frac{A(i) x}{\Vert A(i) \Vert}.
\]
Let $D$ be the usual projective metric on $M=\bP^{d-1}$, that is, $D(x,y)=\Vert x\wedge y\Vert$ for $x,y\in M$. Consider the collection of continuous maps $\cF=\{f_{i}\}_{i\in\cI}$ on the compact metric space $(M,D)$. Consider the associated RDS $(\cF,\mu)$.

Denote by $\chi_1(\mu)\leq \chi_2(\mu)\leq\cdots\leq \chi_d(\mu)$ the fiber-Lyapunov exponents
of the RDS $(\cF,\mu)$. We refrain from giving the definition of them and just recall that, according to Kingman’s ergodic theorem, almost everywhere
\[
	\chi_k(\mu)
	=\lim_{n\to\infty}\frac{1}{n}\log s_k(A^n_{\bi}),
\]
where, for a $d\times d$-matrix $A$, the numbers $s_1(A) \geq s_2(A) \geq \cdots\geq s_d(A) \geq 0$ denote the singular values of $A$.
We say that $(\cF,\mu)$ is \emph{strongly irreducible} if there is no finite family of proper subspaces invariant by $A(i)$ for $\mu$-almost every $i\in\cI$. We say that $(\cF,\mu)$ has a \emph{simple top-Lyapunov exponent} if $\chi_d(\mu)>\chi_{d-1}(\mu)$.

\begin{lemma}\label{lem:ex-cocicles}
If  the RDS $(\cF,\mu)$
 is \emph{strongly irreducible} and has a simple top-Lyapunov exponent, then it has the local contraction property \textbf{(LC)} and the synchronization property \textbf{(S)} (and hence the property \textbf{(P)}).
\end{lemma}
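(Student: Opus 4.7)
My plan is to reduce both \textbf{(LC)} and \textbf{(S)} to classical results about random products of matrices. Under strong irreducibility together with the spectral gap $\chi_d(\mu) > \chi_{d-1}(\mu)$, the Furstenberg/Oseledets/Le Page theory (as developed e.g.\ in Bougerol--Lacroix, or Viana's book on Lyapunov exponents) supplies measurable maps $\bi \mapsto Z(\bi) \in \bP^{d-1}$ (the random top-Lyapunov direction) and $\bi \mapsto V(\bi)$, a codimension-one subspace of $\bR^d$, such that for $\mu^\bN$-a.e.\ $\bi$ and every $v \in S^{d-1} \setminus V(\bi)$,
\[
\frac{1}{n}\log\|A^n_\bi v\| \;\to\; \chi_d(\mu), \qquad [A^n_\bi v] \;\xrightarrow[n\to\infty]{}\; Z(\bi),
\]
with the projective convergence being exponential at any rate strictly smaller than $\chi_d(\mu) - \chi_{d-1}(\mu)$, and locally uniformly on compact subsets of $S^{d-1} \setminus V(\bi)$.

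First I would establish the transversality claim: for each fixed $\tilde x \in S^{d-1}$, $\mu^\bN\{\bi : \tilde x \in V(\bi)\} = 0$. Here $V(\bi)$ can be written as $\ker \eta(\bi)$ for a measurable $\eta(\bi) \in \bP((\bR^d)^*)$ whose distribution is the unique stationary measure of the contragredient cocycle; since the dual cocycle inherits strong irreducibility from $(A, \mu)$, this stationary measure charges no proper projective subspace, in particular no hyperplane of the form $\{[\eta] : \eta(\tilde x) = 0\}$.

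Next I would verify \textbf{(LC)} with the universal rate $q \eqdef e^{-\frac12(\chi_d(\mu) - \chi_{d-1}(\mu))} \in (0,1)$. Fix $x = [\tilde x] \in \bP^{d-1}$; for $\mu^\bN$-a.e.\ $\bi$, $\tilde x \notin V(\bi)$, so some compact neighborhood $K$ of $\tilde x$ in $S^{d-1}$ is disjoint from $V(\bi)$. The locally uniform exponential convergence on $K$ then yields constants $C(\bi, x)$ and $n_0(\bi, x)$ with $\diam_D f_\bi^n([K]) \le q^n$ for all $n \ge n_0$; for $n < n_0$ I shrink $K$ finitely many times using continuity of the maps $f_\bi^0, \dots, f_\bi^{n_0-1}$, producing the required neighborhood $B$ of $x$ satisfying the bound for every $n$. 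Property \textbf{(S)} is then easier: for $x, y \in \bP^{d-1}$, the transversality step applied to each representative shows $\tilde x, \tilde y \notin V(\bi)$ off a $\mu^\bN$-null set, so both $f_\bi^n(x)$ and $f_\bi^n(y)$ converge to the common limit $Z(\bi)$, giving $D(f_\bi^n(x), f_\bi^n(y)) \to 0$; and \textbf{(P)} is immediate from \textbf{(S)}.

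The hard part will be the faithful invocation of classical theory in the first two steps, specifically obtaining the \emph{locally uniform} (not merely pointwise almost-sure) exponential decay of $[A^n_\bi v]$ to $Z(\bi)$ with rate tied to the spectral gap, together with the almost-sure transversality $\tilde x \notin V(\bi)$. Once these ingredients are in place, passing from pointwise convergence to control on the diameter, and the deduction of \textbf{(S)}, reduce to routine compactness and continuity arguments on $\bP^{d-1}$.
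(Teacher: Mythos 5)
Your proposal follows the same classical route as the paper: Oseledets theory gives the codimension-one subspace $V(\bi)$ and the transversality statement $\tilde x\notin V(\bi)$ a.s.\ (which the paper pulls from Ledrappier~\cite{Led:84} in the form $\tfrac1n\log\|A^n_\bi x\|\to\chi_d(\mu)$ a.s., while you phrase it via the dual cocycle's stationary measure charging no hyperplane — equivalent, both correct), and the spectral gap then forces exponential projective contraction toward the top direction. Where you diverge from the paper is precisely at the point you flag as ``the hard part'': you invoke as a black box the locally uniform exponential convergence $[A^n_\bi v]\to Z(\bi)$ on compact subsets of $S^{d-1}\setminus V(\bi)$ at any rate below $\chi_d-\chi_{d-1}$, whereas the paper proves this local uniformity by hand. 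Concretely, the paper fixes the ball $\hat B_\bi$ of radius $\beta(\bi)/2$ around $x$ where $\beta(\bi)=\operatorname{dist}(x,\bP(V_\bi))$, passes to a subsequence along which $A^n_\bi/\|A^n_\bi\|$ converges to a rank-one matrix $M$ killing $V_\bi$, and deduces $\lim_n\tfrac1n\log\inf_{y\in\hat B_\bi}\|A^n_\bi y\|/\|A^n_\bi\|=0$; combined with the exterior-power estimate $\|A^n_\bi x\wedge A^n_\bi y\|\le s_d(A^n_\bi)s_{d-1}(A^n_\bi)$ this yields $\limsup_n\tfrac1n\log\diam f^n_\bi(\hat B_\bi)\le\chi_{d-1}-\chi_d$, and shrinking $\hat B_\bi$ handles the finitely many small $n$, exactly as you propose. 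For \textbf{(S)}, the paper again runs the exterior-power / singular-value computation directly and gets exponential decay for free, while you argue more softly that both orbits converge to the common $Z(\bi)$; both are fine, though the paper's version is self-contained and quantitative in a way that matches its chosen rate $q=e^{(\chi_{d-1}-\chi_d)/2}$. In short: your architecture is correct and matches the paper's, but you should either supply a precise reference for the locally uniform exponential convergence (it is not stated verbatim in the usual sources) or reproduce the paper's subsequence / rank-one-limit argument, since that is the content the lemma is really carrying.
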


It is well known in linear cocycle theory that the assumptions in Lemma \ref{lem:ex-cocicles} imply that the RDS $(\cF,\mu)$ has a
a unique $\mu$-stationary probability measure \textbf{(US)} (see \cite[Theorem 1]{GuiRau:1986}).

Before providing proof of this, note that there are many examples of linear cocycles satisfying these conditions, in particular, the linear cocycles studied in \cite[Section 8]{Viana:2014}.

\begin{proof}[Proof of Lemma \ref{lem:ex-cocicles}]
Since $\chi_d(\mu)>\chi_{d-1}(\mu)$,
by Oseledets multiplicative ergodic theorem, for $\mu^{\bN}$-almost every $\bi\in\cI^{\bN}$ there exists a $(d-1)$-dimensional subspace $V_{\bi}$ of $\bR^{d}$ such that for every $v\in\bR^{d}\setminus V_{\bi}$ and $w\in V_{\bi}$
\begin{equation}\label{eq:ex-cocy-furs-ex1-001}
\lim_{n\to\infty}\frac{1}{n}\log\Vert A^n_{\bi}v\Vert=\chi_d(\mu),\quad\mbox{and}\quad \lim_{n\to\infty}\frac{1}{n}\log\Vert A^n_{\bi}w\Vert<\chi_d(\mu).
\end{equation}
Denote by $\bP(V_{\bi})$ the projective subspace relative to $V_{\bi}$.

As $\cF$ is strongly irreducible, by \cite[Corollary 1.3, Sec. III]{Led:84}, for $\mu^{\bN}$-almost every $\bi\in\cI^{\bN}$,
\begin{equation}\label{eq:ex-cocy-furs-ex1}
\lim_{n\to\infty}\frac{1}{n}\log\Vert A^n_{\bi}x\Vert=\chi_d(\mu).
\end{equation}

Let us first prove the synchronization property \textbf{(S)}. Given $x,y\in\bP^{d-1}$ and $\bi\in\cI^{\bN}$,
\begin{align*}
    \limsup_{n\to\infty}\frac{1}{n}\log\delta(f_{\bi}^n(x),f_{\bi}^n(y))
    &=\limsup_{n\to\infty}\frac{1}{n}\log\frac{\Vert A^n_{\bi}x\wedge A^n_{\bi}y\Vert}{\Vert A^n_{\bi}x\Vert\,\Vert  A^n_{\bi}y\Vert}\\
    &\leq \lim_{n\to\infty}\frac{1}{n}\log\frac{s_d( A^n_{\bi})s_{d-1}( A^n_{\bi})}{\Vert A^n_{\bi}x\Vert\,\Vert  A^n_{\bi}y\Vert}.
\end{align*}
By \eqref{eq:ex-cocy-furs-ex1-001} and \eqref{eq:ex-cocy-furs-ex1}, for every $x,y\in\bP^{d-1}$ and for $\mu^{\bN}$-almost every $\bi\in\cI^{\bN}$,
\begin{align*}
    \limsup_{n\to\infty}\frac{1}{n}\log\delta(f_{\bi}^n(x),f_{\bi}^n(y))
    \leq\chi_d(\mu)+\chi_{d-1}(\mu)-2\chi_d(\mu)
    =\chi_{d-1}(\mu)-\chi_d(\mu)<0,
\end{align*}
which implies that \textbf{(S)} holds.

Now, let us prove the local contraction property \textbf{(LC)}. That is, let us prove that given $x\in\bP^{d-1}$ for $\mu^{\bN}$-almost every $\bi\in\cI^{\bN}$ there is a neighborhood $B_{\bi}\subset\bP^{d-1}$ of $x$ such that
\begin{equation}\label{eq1:ex-cocicles}
    \diam f_{\bi}^n(B_{\bi})\leq q^n,
\end{equation}
for $q=e^{(\chi_{d-1}-\chi_{d})/2}\in(0,1)$.
Fix $x\in\bP^{d-1}$. By \eqref{eq:ex-cocy-furs-ex1}, $x\notin \bP(V_{\bi})$. Since $\bP(V_{\bi})$ is a closed proper subset of $\bP^{d-1}$ and $x\notin \bP(V_{\bi})$, we have
\[
\beta(\bi)=\inf_{y\in\bP(V_{\bi})}D(x,y)>0.
\]
Let
\[
\hat{B}_\bi\eqdef\left\{y\in\bP^{d-1}\colon \delta(x,y)<\frac{\beta(\bi)}{2}\right\}.
\]
Now, let us show that
\begin{equation}\label{eq:ex-cocy-0009}
\lim_{n\to\infty}\frac{1}{n}\log \inf_{y\in \hat{B}_{\bi}}\left\Vert \frac{A^n_{\bi} y}{\Vert A^n_{\bi}\Vert} \right\Vert=0.
\end{equation}
Since $\Vert y\Vert=1$, the above limit is less than or equal to 0. To show the other inequality, possibly passing for a subsequence, we can assume that the sequence
\[
\left(\frac{1}{\Vert A^n_{\bi}\Vert} A^n_{\bi}\right)_{n\in\bN}
\]
converges to a matrix $M$ satisfying $\Vert M \Vert=1$. Note that $M v=0$ for every $w\in V_{\bi}$, because from \eqref{eq:ex-cocy-furs-ex1-001} we get
\[  \lim_{n\to\infty}\frac{1}{n}\log \left\Vert \frac{A^n_{\bi} w}{\Vert A^n_{\bi}\Vert} \right\Vert=0.\]
Hence, there exists $x^{\ast}\in\bP^{d-1}\setminus \bP(V_{\bi})$ such that $\Vert M x^{\ast}\Vert=1$.
By definition of $\hat{B}_{\bi}$, there exists $\alpha>0$ such that for every $y\in\hat{B}_{\bi}$ we have
$
\vert\langle x^{\ast},y \rangle\vert\geq \alpha.
$
By \eqref{eq:ex-cocy-furs-ex1-001},
\begin{align*}
\lim_{n\to\infty}\frac{1}{n}\log \inf_{y\in \hat{B}_{\bi}}\left\Vert \frac{A^n_{\bi} y}{\Vert A^n_{\bi}\Vert} \right\Vert&=\lim_{n\to\infty}\frac{1}{n}\left(\log \inf_{y\in \hat{B}_{\bi}}\vert\langle x^{\ast},y \rangle\vert+\log\left\Vert \frac{A^n_{\bi} x^{\ast}}{\Vert A^n_{\bi}\Vert} \right\Vert\right)\geq 0,
\end{align*}
which implies the desired inequality and so \eqref{eq:ex-cocy-0009}.

Since $\hat{B}_{\bi}\subset \bP^{d-1}\setminus \bP(V_{\bi})$, for every $y\in \hat{B}_\bi$, $y\neq x$, we have
\begin{align*}
    \limsup_{n\to\infty}\frac{1}{n}\log\diam f_{\bi}^n(\hat{B}_\bi)&\leq\limsup_{n\to\infty}\frac{1}{n}\log\sup_{y\in \hat{B}_\bi,\ y\neq x}\delta(f_{\bi}^n(x),f_{\bi}^n(y))\\
    &=\limsup_{n\to\infty}\frac{1}{n}\log\sup_{y\in \hat{B}_\bi,\ y\neq x}\frac{\Vert A^n_{\bi}x\wedge A^n_{\bi}y\Vert}{\Vert A^n_{\bi}x\Vert\,\Vert  A^n_{\bi}y\Vert}.
\end{align*}
By \eqref{eq:ex-cocy-furs-ex1} and \eqref{eq:ex-cocy-0009}, we get
\begin{align*}
    &\limsup_{n\to\infty}\frac{1}{n}\log\diam f_{\bi}^n(\hat{B}_\bi)\\
    &\leq \lim_{n\to\infty}\frac{1}{n}\log\frac{s_d\left( A^n_{\bi}\right)\, s_{d-1}\left( A^n_{\bi}\right)}{\Vert  A^n_{\bi}p\Vert\,\Vert A^n_{\bi}\Vert}-\liminf_{n\to\infty}\frac{1}{n}\log\inf_{y\in \hat{B}_\bi,\ y\neq x}\left\Vert \frac{A^n_{\bi}y}{\Vert A^n_{\bi}\Vert}\right\Vert\\
    &=\chi_d(\mu)+\chi_{d-1}(\mu)-2\chi_d(\mu)\\
    &=\chi_{d-1}(\mu)-\chi_d(\mu)<0,
\end{align*}
which implies \eqref{eq1:ex-cocicles} for some $B_{\bi}\subset \hat{B}_\bi$.
\end{proof}

\section{Exponential synchronization}\label{sec:partIIneu}

Throughout this section, $(M,D)$ is a compact metric space and $(\cF,\mu)$ is an RDS which satisfies \textbf{(LC)} and \textbf{(P)}. Let $q\in(0,1)$ be an associated contracting rate. Before proving Proposition \ref{prop:sync-exp} let us note the following consequence of the property \textbf{(LC)}:

\begin{lemma} \label{lemma:eq:diam-Omega}
	Assume that $(\cF,\mu)$ is an RDS on a compact metric space $(M,D)$ satisfying \textbf{(LC)}. Then for every $\delta\in(0,1)$ and $z\in M$ there is a neighborhood $B$ of $z$ so that
\begin{align*}
\mu^{\bN}\big(\big\{\bi\in\cI^{\bN}  \colon \mbox{diam}_D( f_\bi^n(B))
	\le q^n \mbox{ for all }n\in\bN\big\}\big)\geq \delta.
\end{align*}
\end{lemma}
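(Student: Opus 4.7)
The plan is to cover the full-measure set given by \textbf{(LC)} by a nondecreasing family of measurable events, each associated to a \emph{fixed} neighborhood of $z$, and then conclude by continuity of $\mu^\bN$ from below. Concretely, I would fix $z\in M$, let $B_k$ denote the open ball of radius $1/k$ around $z$, and set
\[
\Omega_k\eqdef\big\{\bi\in\cI^\bN\colon \diam_D(f_\bi^n(B_k))\le q^n\text{ for all }n\in\bN\big\}.
\]
Since $B_{k+1}\subset B_k$, we have $\Omega_k\subset\Omega_{k+1}$.

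A short measurability check is required. Picking a countable dense subset $\{x_j\}\subset B_k$ and using the continuity of each realized composition $f_\bi^n$, one can rewrite $\diam_D(f_\bi^n(B_k))=\sup_{j,j'} D(f_\bi^n(x_j),f_\bi^n(x_{j'}))$. Since each map $\bi\mapsto D(f_\bi^n(x_j),f_\bi^n(x_{j'}))$ depends only on the first $n$ coordinates of $\bi$ and is measurable as a composition of finitely many measurable maps, $\Omega_k$ is a countable intersection of measurable sets, hence measurable.

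The heart of the argument is the equality $\mu^\bN(\bigcup_k\Omega_k)=1$. Here I would invoke \textbf{(LC)} directly: for $\mu^\bN$-almost every $\bi$, there is an open neighborhood $B(\bi)$ of $z$ on which $\diam_D(f_\bi^n(B(\bi)))\le q^n$ for every $n$. Since $B(\bi)$ is open and contains $z$, there exists $k_0=k_0(\bi)$ with $B_{k_0}\subset B(\bi)$, and monotonicity of diameters under inclusion yields $\bi\in\Omega_{k_0}\subset\bigcup_k\Omega_k$. Continuity of $\mu^\bN$ from below then gives $\mu^\bN(\Omega_k)\nearrow 1$, so for any prescribed $\delta\in(0,1)$ one can choose $k$ large enough that $\mu^\bN(\Omega_k)\ge\delta$, and $B\eqdef B_k$ is the desired neighborhood.

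I do not anticipate a real obstacle here: the lemma is essentially a quantitative reformulation of \textbf{(LC)} that trades the $\bi$-dependence of the neighborhood for a loss of measure at most $1-\delta$. The only point demanding care is the measurability of $\Omega_k$; everything else follows from monotonicity and continuity of measure.
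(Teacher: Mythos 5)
Your proof is correct and follows essentially the same path as the paper's: exhaust by balls $B_k$ centered at $z$, show that the corresponding sets $\Omega_k$ increase and that (LC) forces their union to have full measure, then invoke continuity of measure from below. Your explicit measurability check for $\Omega_k$ is a small but welcome addition that the paper leaves implicit.
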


\begin{proof}
Fix $z\in M$. For every $m\in\bN$ consider the open ball $B_m$ of radius $1/m$ and center $z$. Let $\Omega_m$ be the set of sequences which contract it, that is,
\[
   	\Omega_m
	\eqdef \big\{\bi\in\cI^{\bN} \colon \mbox{diam}_D\big( f_\bi^n(B_m)\big)
	\le q^n \mbox{ for all }n\in\bN\big\}
\]
Note that $\Omega_m\subset\Omega_{m+1}$ and hence,
\[
    \lim_{m\to\infty}\mu^{\bN}\big(\Omega_m \big)
    =\mu^{\bN}\big(\bigcup_{m\in\bN}\Omega_m \big).
\]
Now, let us show that
\begin{equation}\label{eq:claim-2.1-01}
       \mu^{\bN}\big(\bigcup_{m\in\bN}\Omega_m \big)=1.
\end{equation}
Indeed, by \textbf{(LC)}, there exists $\Gamma_z\subset \cI^\bN$ such that $\mu^\bN(\Gamma_z)=1$ and for every $\bi\in\Gamma_z$ there exists a neighborhood $N(\bi)$ of $z$ such that
\[
    \diam_D\big( f_\bi^n(N(\bi)\big)
	\le q^n \mbox{ for all }n\in\bN.
\]
Therefore, there is $m=m(\bi)\in\bN$ large enough so that $B_m\subset N(\bi)$ and hence $\bi\in\Omega_m$. This implies $\Gamma_z\subset \bigcup_{m\in\bN}\Omega_m$. From \eqref{eq:claim-2.1-01}, we get
    \[
    \lim_{m\to\infty}\mu^{\bN}\big(\Omega_m \big)=1.
    \]
    Hence, for any $\delta\in(0,1)$, we have that for $m$ large enough $\mu^{\bN}\big(\Omega_m \big)\geq \delta$. The assertion of the lemma follows by taking $B=B_m$.
\end{proof}

Now we are ready to prove Proposition \ref{prop:sync-exp}.

\begin{proof}[Proof of Proposition \ref{prop:sync-exp}]
The synchronization property \textbf{(S)} is a consequence of \cite[Proposition 4.18]{Mal:17}. Let us prove that the synchronization is exponential and that the rate of convergence is uniform. That is, we will prove that \eqref{prop1.1:syncexp} holds. Let us consider the following sets. Given $x,y\in M$, let
\begin{align*}
	\Omega^{x,y}
	&\eqdef \{\bi\in\cI^{\bN} \colon \text{ there is }n\in\bN\text{ such that }
				D(X_{n+k}^x(\bi),X_{n+k}^y(\bi))\le q^k\text{ for all }k\geq 0\},\\
      \mathcal{S}^{x,y}&\eqdef\{\bi\in\cI^{\bN}  \colon \lim_{n\to\infty}D(X_n^x(\bi),X_n^y(\bi))=0\}.
\end{align*}
By \textbf{(S)}, $\mu^\bN(\mathcal{S}^{x,y})=1$ for every $x,y\in M$. It remains to prove the following claim which immediately will imply \eqref{prop1.1:syncexp}.

\begin{claim}\label{claim:omegafullmeas}
	For every $x,y\in M$, $\mu^\bN(\Omega^{x,y})=1$.
\end{claim}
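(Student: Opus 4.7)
The plan is to deduce the claim from L\'evy's upward martingale convergence theorem applied to the coordinate filtration $\cG_n \eqdef \sigma(i_1,\ldots,i_n)$, once I establish the uniform lower bound
\[
\phi(u,v) \eqdef \mu^{\bN}(\Omega^{u,v}) \ge \delta \quad \text{for all } (u,v) \in M\times M
\]
for some $\delta>0$. The key structural observation is that for every $n\ge 0$,
\[
\{\bi \in \cI^{\bN}: \sigma^n\bi \in \Omega^{X_n^x(\bi),\, X_n^y(\bi)}\} \subseteq \Omega^{x,y},
\]
because a witness $m$ for the event $\sigma^n\bi\in\Omega^{X_n^x(\bi),X_n^y(\bi)}$ produces, via the identity $X_{m+k}^{X_n^x(\bi)}(\sigma^n\bi) = X_{n+m+k}^x(\bi)$ and its $y$-analogue, a witness $n+m$ for $\bi\in\Omega^{x,y}$. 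Since $\sigma^n\bi$ is independent of $\cG_n$ and distributed as $\mu^{\bN}$, a conditional Fubini argument gives
\[
\bP(\Omega^{x,y}\mid\cG_n)(\bi) \;\ge\; \phi\bigl(X_n^x(\bi),\, X_n^y(\bi)\bigr) \;\ge\; \delta \qquad \mu^{\bN}\text{-a.s.}
\]
for every $n$. Because $\Omega^{x,y}$ is measurable with respect to $\bigvee_n\cG_n$, L\'evy's theorem yields $\bP(\Omega^{x,y}\mid\cG_n) \to \mathbf{1}_{\Omega^{x,y}}$ $\mu^{\bN}$-almost surely, and the uniform lower bound then forces $\mathbf{1}_{\Omega^{x,y}}\ge\delta>0$ a.s.; since the indicator is $\{0,1\}$-valued, this gives $\mu^{\bN}(\Omega^{x,y}) = 1$.

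To produce the uniform lower bound, I run a single strong-Markov trial built from Lemma \ref{lemma:eq:diam-Omega}. Fix any $\delta_0\in(0,1)$. For each $z\in M$, Lemma \ref{lemma:eq:diam-Omega} provides a neighborhood $B(z)\ni z$ together with a measurable set $A(z)\subset\cI^{\bN}$ satisfying $\mu^{\bN}(A(z))\ge\delta_0$ and such that every $\bj\in A(z)$ has $\diam_D f_\bj^k(B(z))\le q^k$ for all $k\in\bN$. Compactness of $M$ yields a finite subcover $\{B(z_1),\ldots,B(z_K)\}$, and I let $\rho>0$ be a Lebesgue number for it. For any $(u,v)\in M\times M$, property \textbf{(S)} (already established in the present proof) ensures that the $(\cG_n)$-stopping time $\tau(\bi)\eqdef\min\{n\ge 0 : D(X_n^u(\bi), X_n^v(\bi))<\rho\}$ is $\mu^{\bN}$-a.s.\ finite; the Lebesgue number property then selects a $\cG_\tau$-measurable index $j(\bi)\in\{1,\ldots,K\}$ with $\{X_{\tau}^u(\bi),X_{\tau}^v(\bi)\}\subset B(z_{j(\bi)})$. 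By the strong Markov property,
\[
\bP\bigl(\sigma^\tau\bi\in A(z_{j(\bi)})\,\big|\,\cG_\tau\bigr) \;=\; \mu^{\bN}\bigl(A(z_{j(\bi)})\bigr)\;\ge\;\delta_0,
\]
and on this success event $D(X_{\tau+k}^u(\bi), X_{\tau+k}^v(\bi)) \le \diam_D f_{\sigma^\tau\bi}^k(B(z_{j(\bi)})) \le q^k$ for every $k\ge 0$, so $\bi\in\Omega^{u,v}$ with witness $\tau$. Hence $\phi(u,v)\ge\delta_0$, and I may take $\delta\eqdef\delta_0$.

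The main technical hurdle I anticipate is measurability bookkeeping: to make $\phi\bigl(X_n^x(\bi),X_n^y(\bi)\bigr)$ a legitimate version of the conditional probability above, one has to verify that $(u,v)\mapsto \phi(u,v)$ is Borel, which follows from a Fubini argument applied to the jointly measurable indicator $(u,v,\bi)\mapsto \mathbf{1}_{\Omega^{u,v}}(\bi)$. A more pedestrian alternative would attempt a Borel--Cantelli-style scheme with independent trials at successive returns to the cover, but since each $A(z)$ depends on the entire tail of $\bj$ the required independence is only available after passing to finite truncations $A(z)^{(L)}$, which no longer imply $\bi\in\Omega^{u,v}$; the L\'evy-law shortcut packages this entire infinite iteration into a single martingale-convergence step and removes any need to tune $\delta_0$ to the cardinality $K$ of the cover.
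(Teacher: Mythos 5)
Your proof is correct but takes a genuinely different route from the paper's. The paper establishes $\mu^\bN(\Omega^{x,y})\ge\delta$ directly for arbitrary $\delta<1$ and then lets $\delta\to1$: it decomposes the synchronization event $\mathcal{S}$ according to the first hitting time $N(\bi)$ and hit element $B(\bi)$ of a fixed finite subcover (refined by intersections to make $B(\bi)$ well defined), verifies that each cylinder $\mathcal{S}(n,\ell)_1^n\times\mathcal{C}_\ell$ sits inside $\Omega^{x,y}$, and sums the resulting product measures; the measurability concerns there are with the maps $N$ and $B$. You instead fix a single $\delta_0$, use a Lebesgue number $\rho$ to replace the intersection bookkeeping by a clean stopping time $\tau$ (first time the two orbit points are $\rho$-close, hence simultaneously in one cover element), invoke the strong Markov property at $\tau$ to get the uniform bound $\phi(u,v)\ge\delta_0$, and then bootstrap from $\delta_0$ to $1$ via L\'evy's upward theorem applied to $\bP(\Omega^{x,y}\mid\cG_n)\ge\phi(X_n^x,X_n^y)$. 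The key shift inclusion $\{\sigma^n\bi\in\Omega^{X_n^x(\bi),X_n^y(\bi)}\}\subseteq\Omega^{x,y}$ and the freezing argument are both sound, as is the joint Borel measurability of $(u,v,\bj)\mapsto\mathbf 1_{\Omega^{u,v}}(\bj)$ that you correctly identify as the technical prerequisite. The martingale shortcut removes the need to push $\delta$ toward $1$ in the covering step (so you never tune $\delta_0$ against the cover cardinality $K$), at the price of invoking L\'evy's theorem where the paper stays entirely elementary; the Lebesgue number also makes the selection of a common ball cleaner than the paper's closure under intersections.
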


\begin{proof}
Fix $x,y\in M$ and $\delta\in(0,1)$. For simplicity of notation, for the remainder of this proof, we continue to write $\Omega=\Omega^{x,y}$ and $\mathcal{S}=\mathcal{S}^{x,y}$. By Lemma \ref{lemma:eq:diam-Omega}, for every $z\in M$, for almost every $\bi$ there exists a neighborhood $B_z$ of $z$ such that
\begin{align}\label{eq:diam-Omega}
\mu^{\bN}\big(\big\{\bi\in\cI^{\bN}  \colon \mbox{diam}_D( f_\bi^n(B_z))
	\le q^n \mbox{ for all }n\in\bN\big\}\big)\geq \delta.
\end{align}
As $(M,D)$ is compact, there exist $z_1,\ldots,z_m\in M$ such that, letting $B_j=B_{z_j}$, we have $M\subset\bigcup_{j=1}^m B_j$. These sets can intersect. Let us denote by
\[
	\mathcal{B}\eqdef\Big\{\bigcap_{j\in J} B_j\colon J\subset \{1,\ldots,m\} \Big\}
\]
the family of all such intersections. Note that this is some finite collection of open sets $\mathcal{B}=\{V_1,\ldots,V_k\}$. For every $\ell\in\{1,\ldots,k\}$ there is $j\in\{1,\ldots,m\}$ so that $V_\ell\subset B_j$.  By \eqref{eq:diam-Omega}, for every $\ell\in\{1,\ldots,k\}$ there is a set $\mathcal{C}_\ell\subset \cI^\bN$ such that
\begin{equation}\label{eq:measure-C_i}
   \mu^{\bN}(\mathcal{C}_\ell)\geq \delta,
   \quad\text{where}\quad
   \mathcal{C}_\ell\eqdef\left\{\bi\in\cI^{\bN}  \colon \mbox{diam}_D\left( f_\bi^n(V_\ell)\right)
	\le q^n \mbox{ for all }n\in\bN\right\}.
\end{equation}
Now, note that for any $\bi \in \mathcal{S}$ there exist $n\in\bN$ and $j\in\{1,\ldots,m\}$ such that
$f_{\bi}^n(x)$ and $f_{\bi}^n(y)$ both are in $B_j$ and denote by $N(\bi)$ the smallest $n$ with this property, that is
\[
N(\bi)\eqdef\min\{n\in\bN\colon f_{\bi}^n(x),f_{\bi}^n(y)\in B_j, \mbox{ for some }j=1,\ldots,m\}.
\]
Note that \emph{a priori} there can be more than one index $j$ with this property. Therefore, let
\[
B(\bi)\eqdef\bigcap_{j\colon f_{\bi}^{N(\bi)}(x),f_{\bi}^{N(\bi)}(y)\in B_j} B_j \in\mathcal{B}.
\]
 It is not difficult to show that $B\colon \mathcal{S} \to \mathcal{B}$ and $N\colon \mathcal{S}\to \bN$ are measurable functions. Setting
\[
	\mathcal{S}(n,\ell)
	\eqdef\{\bi\colon N(\bi)=n,\,B(\bi)=V_\ell\},
\]
we have that
\begin{equation}\label{unionunion}
\mathcal{S}=\bigcup_{n\in\bN}\bigcup_{\ell=1,\ldots,k}\mathcal{S}(n,\ell)
\end{equation}
is a disjoint union of measurable sets.

Note that if $\bi=(i_1,i_2,\ldots)\in\mathcal S(n,\ell)$, then every $\bj=(j_1,j_2,\ldots)$ satisfying $ j_k=i_k$ for all $k=1,\ldots,n$ is also in this set. Let
\[
	\mathcal S(n,\ell)_1^n
	\eqdef\{(\bi_1,\ldots,\bi_n)\colon (\bi_1,\ldots,\bi_n)\times \cI^{\bN}\subset \mathcal{S}(n,\ell)\}
	\subset \cI^n.
\]
It is not hard to see that $\mathcal S(n,\ell)_1^n\times\mathcal{C}_\ell\subset \Omega$. In particular,
\[
\bigcup_{n\in\bN}\bigcup_{\ell=1,\ldots,k}\mathcal S(n,\ell)_1^n\times\mathcal{C}_\ell\subset \Omega.
\]
Note that this is also a disjoint union of measurable sets.
By \eqref{eq:measure-C_i}, we have
\[
	\mu^\bN\big(\mathcal S(n,\ell)_1^n\times\mathcal{C}_\ell\big)
	=\mu^n(\mathcal S(n,\ell)_1^n)\cdot\mu^\bN(\mathcal{C}_j)
	\geq \mu^n(\mathcal S(n,\ell)_1^n)\cdot\delta.
\]
We get
\begin{align*}
    \mu^{\bN}(\Omega)
    &\geq \sum_{n\in\bN} \sum_{\ell=1,\ldots,k}\mu^\bN\big(\mathcal S(n,\ell)_1^n\times\mathcal{C}_\ell\big)\\
    &\geq \delta  \sum_{n\in\bN} \sum_{\ell=1,\ldots,k}\mu^n\big(\mathcal S(n,\ell)_1^n\big)
    =\delta  \sum_{n\in\bN} \sum_{\ell=1,\ldots,k}\mu^{\bN}\big(\mathcal S(n,\ell)_1^n\times \cI^{\bN}\big)\\
    &\geq\delta  \sum_{n\in\bN} \sum_{\ell=1,\ldots,k}\mu^{\bN}\big(\mathcal{S}(n,\ell)\big)
    =\delta \mu^{\bN}(\mathcal{S})
    =\delta,
\end{align*}
where we also used \eqref{unionunion} and $\mu^{\bN}(\mathcal{S})=1$.
This implies that $\mu^{\bN}(\Omega)\geq \delta$. As $\delta\in(0,1)$ is arbitrary, it follows that $\Omega^{x,y}=\Omega$ has full measure.
\end{proof}

To complete the proof of the proposition, we will establish the uniqueness of the stationary measure. By synchronization and dominated convergence theorem, for every $x,y\in M$
\[
\lim_{n\to\infty}\int_{\cI^{\bN}}D(X_n^x(\bi),X_n^y(\bi))\, d\mu^\bN(\bi)=0.
\]
Let $\nu_1$ and $\nu_2$ be two $\mu$-stationary measures. Let $h\colon(M, D)\to \bR$ be a Lipschitz function. Then, for all $n\in \bN$
\begin{align*}
    &\left\vert \int_M h(x)\,d\nu_1(x)- \int_M h(y)\,d\nu_2(y) \right\vert\\
    &=\left\vert \int_M \int_{\cI^{\bN}}h(X_n^x(\bi))\,d\mu^\bN(\bi)\,d\nu_1(x)- \int_M\int_{\cI^{\bN}}h(X_n^x(\bi))\,d\mu^\bN(\bi)\,d\nu_2(y) \right\vert\\
    &\leq \int_M \int_M\left(\int_{\cI^{\bN}}\left\vert h(X_n^x(\bi)) - h(X_n^y(\bi))\right\vert \,d\mu^\bN(\bi)\right)d\nu_1(x)\,d\nu_2(y)\\
    &\leq \mbox{Lip}(h)\int_M \int_M\left(\int_{\cI^{\bN}} D(X_n^x(\bi),X_n^y(\bi)) 	\,d\mu^\bN(\bi)\right)d\nu_1(x)\,d\nu_2(y).
\end{align*}
Letting $n$ tend to infinity and again applying the dominated convergence theorem, we find that the right-hand side converges to 0.
Therefore,
\begin{equation}\label{equalitu-stationary-RD}
\int_M h\,d\nu_1= \int_M h\,d\nu_2.
\end{equation}
By compactness of $(M, D)$, the set of the Lipschitz functions is dense in the set of the continuous functions. This, together with the dominated convergence theorem, implies that \eqref{equalitu-stationary-RD} holds for any continuous function $h\colon M\to \bR$. Hence,   $\nu_1=\nu_2$ and \textbf{(US)} follows.

This finishes the proof of the proposition.
\end{proof}

\begin{proof}[Proof of Corollary \ref{cor:atomic}]
Let $\nu$ be the $\mu$-stationary probability measure on $M$. Assume that $\nu$ has atoms and let
\[
    \beta
    \eqdef\max_{x\in M}\nu(\{x\})
    \quad\text{ and }\quad
    E\eqdef\left\{x\in M\colon \nu(\{x\})=\beta\right\}.
\]
Since $\nu $ is a probability, $E$ has finite cardinality. As $\nu$ is stationary, for any $x\in E$
\begin{align}\label{eq:cor-atom}
    \beta=\nu(\{x\})=\int \nu\left((f_i)^{-1}(\{x\})\right)\,d\mu(i).
\end{align}
Since all maps in $\cF$ are injective, $(f_i)^{-1}(\{x\})$ has at most one element. For $x\in E$, it holds $\nu\left((f_i)^{-1}(\{x\})\right)\leq \beta$. Moreover, \eqref{eq:cor-atom} implies that for $\mu$-almost every $i$ we have $\nu\left((f_i)^{-1}(\{x\})\right)= \beta$. In particular, $(f_i)^{-1}(E)\subset E$ and hence $\card (f_i)^{-1}(E)\leq \card  E$. Again by stationarity of $\nu$,
\[
    \card E\cdot \beta
    =\nu(E)=\int \nu\left((f_i)^{-1}(E)\right)\,d\mu(i)=\beta\int \card (f_i)^{-1}(E)\,d\mu(i),
\]
    we get that $\card (f_i)^{-1}(E)= \card  E$ for $\mu$-almost every $i\in\cI$. Therefore,
    \[
    E=(f_i)^{-1}(E)=f_i(E)
    \]
    for $\mu$-almost every $i\in\cI$.
Let us show that $\card E=1$. Indeed, otherwise,
\[
    r
    \eqdef\min\{D(x,y)\colon x\neq y,\ x,y\in E\}>0
\]
and for any $x,y\in E$, $x\neq y$, for every $n\in\bN$ and $\mu^{\bN}$-almost every $\bi\in\cI^{\bN}$
\[
    D(f_{\bi}^n(x),f_{\bi}^n(y))\geq r>0,
\]
which contradicts the synchronization \textbf{(S)} established in Proposition \ref{prop:sync-exp}.
This proves $E=\{x_0\}$ for some $x_0\in M$. By \textbf{(S)}, for every $y\in M$ and $\mu^{\bN}$-almost every $\bi\in\cI^{\bN}$
\[
    \lim_{n\to\infty}D(f_{\bi}^n(y),x_0)=0.
\]
In other words, random orbits accumulate at $x_0$ and hence the stationary measure $\nu$ is the Dirac measure at $x_0$. This proves the assertion.
\end{proof}

For the proof of the following corollary, recall that an RDS $(\cF,\mu)$ is \emph{aperiodic} if there does not exist a finite number $p\ge2$ of pairwise disjoint closed subsets $F_1,\ldots, F_p\subset\bS^1$ so that for $\mu$-almost every $i$, $f_i(F_k)\subset F_{k+1}$ for $k=1,\ldots, p-1$ and $f_i(F_p)\subset F_1$.

\begin{corollary}
	Assume that $(\cF,\mu)$ is an RDS of continuous maps on a compact metric space $(M,D)$ satisfying \textbf{(P)} and \textbf{(LC)}. Then for any continuous function $\varphi\colon M\to\bR$,
\[
	\lim_{n\to\infty}\sup_{x\in M}\left|\int\varphi(X_n^x(\bi))\,d\mu^{\bN}(\bi)
	- \int\varphi\,d\nu\right|=0.
\]
In other words, for every $x\in M$ the distribution of $X_n^x$,
\begin{equation}\label{eqPort}
\left(\mu^{\bN}(\{\bi\in\cI^{\bN}\colon X_n^x(\bi)\in \cdot\})\right)_n,
\end{equation}
converges (uniformly) in the weak topology to $\nu$.	
\end{corollary}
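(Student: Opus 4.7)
The plan is to reduce to Lipschitz test functions by density, use the stationarity of the unique $\mu$-stationary measure $\nu$ (whose existence and uniqueness come from Proposition \ref{prop:sync-exp}) to express the difference $\int\varphi(X_n^x(\bi))\,d\mu^\bN(\bi)-\int\varphi\,d\nu$ as a coupling average over two trajectories starting from $x$ and from a $\nu$-random point $y$, and then invoke Theorem \ref{teo:sync-on-average} to make this small uniformly in $x$.

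First I would write down the stationarity identity: since $\bM\nu=\nu$, for every continuous $\varphi$ and every $n\in\bN$,
\[
\int\varphi\,d\nu
=\int\int\varphi(X_n^y(\bi))\,d\mu^\bN(\bi)\,d\nu(y).
\]
Subtracting this from $\int\varphi(X_n^x(\bi))\,d\mu^\bN(\bi)$ yields the coupling bound
\[
\Bigl|\int\varphi(X_n^x(\bi))\,d\mu^\bN(\bi)-\int\varphi\,d\nu\Bigr|
\le\int\int\bigl|\varphi(X_n^x(\bi))-\varphi(X_n^y(\bi))\bigr|\,d\mu^\bN(\bi)\,d\nu(y).
\]

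Next, I would prove the statement for Lipschitz $\varphi$. Here the integrand is bounded by $\Lip(\varphi)\cdot D(X_n^x(\bi),X_n^y(\bi))$, so
\[
\sup_{x\in M}\Bigl|\int\varphi(X_n^x(\bi))\,d\mu^\bN-\int\varphi\,d\nu\Bigr|
\le\Lip(\varphi)\cdot\sup_{x,y\in M}\int D(X_n^x(\bi),X_n^y(\bi))\,d\mu^\bN(\bi).
\]
Applying Theorem \ref{teo:sync-on-average} with $\alpha=1$, the right-hand side tends to $0$ as $n\to\infty$. To pass from Lipschitz to arbitrary continuous $\varphi$, I would use the fact that Lipschitz functions are uniformly dense in $C(M)$ on the compact space $(M,D)$: given $\epsilon>0$ pick a Lipschitz $\varphi_\epsilon$ with $\|\varphi-\varphi_\epsilon\|_\infty<\epsilon$; since both $x\mapsto\int\varphi(X_n^x(\bi))\,d\mu^\bN$ and $\int\varphi\,d\nu$ are $1$-Lipschitz in $\varphi$ with respect to the supremum norm, the approximation error is at most $2\epsilon$ uniformly in $x$ and $n$, and the Lipschitz case handles the remainder. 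The equivalence with \eqref{eqPort} and with weak convergence uniform in $x$ is then immediate from the Portmanteau-type characterization via Lipschitz test functions.

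The main obstacle is the uniformity in the second step, namely the fact that
$\phi_n(x,y)\eqdef\int D(X_n^x,X_n^y)\,d\mu^\bN\to 0$ uniformly on $M\times M$. Theorem \ref{teo:sync-on-average} as stated bounds $\sup_{x,y}\sum_{k=0}^n\phi_k(x,y)$, which yields pointwise vanishing of $\phi_n(x,y)$ together with a uniform bound on the full series. The upgrade to uniform vanishing is achieved by combining three ingredients: the continuity of $\phi_n$ on the compact space $M\times M$ (via dominated convergence and the continuity of each $f_i\in\cF$); the sub-Markov identity $\phi_{n+m}(x,y)=\int\phi_m(X_n^x(\bi),X_n^y(\bi))\,d\mu^\bN(\bi)$, obtained by splitting $\bi$ into its first $n$ coordinates and the independent shift $\sigma^n\bi$, which forces $\sup_{x,y}\phi_n$ to be non-increasing in $n$; and a finite-cover contraction argument based on Lemma \ref{lemma:eq:diam-Omega}, identical in spirit to Claim \ref{claim:omegafullmeas}, which guarantees that once a pair of trajectories becomes sufficiently close to lie in a common contracting neighborhood, a positive $\mu^\bN$-probability of sequences contracts them exponentially thereafter.
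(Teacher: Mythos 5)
Your coupling reduction and the reduction to Lipschitz test functions are sound, but the proposal has two serious problems, one structural and one technical.

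\textbf{Circularity.} Your argument relies on Theorem~\ref{teo:sync-on-average}, but in the paper that theorem is proved \emph{after} this corollary and depends on it. Specifically, the paper's proof of Theorem~\ref{teo:sync-on-average} invokes Proposition~\ref{proclaim:conunifweak}, which in turn is proved using \eqref{converweak:point}; and \eqref{converweak:point} is derived by applying the Portmanteau theorem to the sequence of distributions \eqref{eqPort}, i.e.\ precisely the uniform weak convergence asserted by this corollary. So if one replaces the paper's one-line proof of the corollary (which simply notes that \textbf{(P)} implies aperiodicity and then cites Malicet's Proposition~4.14) with your argument, the logical chain becomes Corollary $\Rightarrow$ \eqref{converweak:point} $\Rightarrow$ Proposition~\ref{proclaim:conunifweak} $\Rightarrow$ Theorem~\ref{teo:sync-on-average} $\Rightarrow$ Corollary, which is circular.

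\textbf{The upgrade to uniform vanishing is incomplete.} Even setting circularity aside, there is a gap in the reduction you correctly identify as the ``main obstacle.'' Theorem~\ref{teo:sync-on-average} bounds $\sup_{x,y}\sum_{k=0}^{n}\phi_k(x,y)$, with the supremum taken \emph{outside} the sum; this does not give $\sum_{k}\sup_{x,y}\phi_k(x,y)<\infty$, so one cannot conclude $\sup_{x,y}\phi_n\to0$ directly. Your first two ingredients (continuity of $\phi_n$ and monotonicity of $\Phi_n=\sup\phi_n$ via the sub-Markov identity) establish $\Phi_n\downarrow L\ge0$, but to show $L=0$ one cannot invoke Dini (pointwise monotonicity in $n$ fails absent \textbf{(CA)}), and the third ingredient --- the finite-cover contraction bootstrap --- is only gestured at, not carried out. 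Working it out would amount to re-proving a statement of the same strength as Malicet's Proposition~4.14, which is what the paper cites instead.

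So while the reduction to $\sup_{x,y}\int D(X_n^x,X_n^y)\,d\mu^{\bN}\to0$ is a reasonable idea in the abstract, in the context of this paper the argument both runs backwards through the dependency chain and leaves its decisive step unproved.
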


\begin{proof}
	Note that aperiodicity is an immediate consequence of property \textbf{(P)}.
Hence, we can invoke \cite[Proposition 4.14]{Mal:17} that implies the assertion.
\end{proof}

We are going to need one more ingredient to prove the Theorem \ref{teo:sync-on-average}. 
By the Portmanteau theorem applied to the sequence of distributions \eqref{eqPort} and any measurable set $B\subset M$ satisfying $\nu(B)>0$ and $\nu(\partial B)=0$, for every $x\in M$ we get
\begin{equation}\label{converweak:point}
	 \lim_{n\to\infty}\big|\mu^{\bN}(\{\bi\in\cI^{\bN}\colon X_n^x(\bi)\in B\})-\nu(B)\big|=0.
\end{equation}
Let us now derive some slightly stronger convergence than the one in \eqref{converweak:point}.

\begin{proposition}\label{proclaim:conunifweak}
For every measurable set  $B\subset M$ such that $\nu(B)>0$ and $\nu(\partial B)=0$,
\[	\lim_{n\to\infty}\sup_{x,y\in M}
		\big|\mu^\bN(\{\bi\colon X_n^x(\bi),X_n^y(\bi)\in B\})-\nu(B)\big|=0.\]
\end{proposition}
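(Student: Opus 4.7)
My plan is to lift the problem to the synchronous product random dynamical system $(\tilde\cF,\mu)$ on $M \times M$, defined by $\tilde f_i(x,y) \eqdef (f_i(x), f_i(y))$ and equipped with the product metric $\tilde D((x,y),(x',y')) \eqdef \max\{D(x,x'), D(y,y')\}$. The joint trajectory from $(x,y)$ under this lifted system is precisely $(X_n^x(\bi), X_n^y(\bi))$, so the event in the statement is the hitting event $\{(X_n^x,X_n^y) \in B \times B\}$ for the lifted chain. The overall strategy is to apply the uniform weak convergence corollary proved just before this proposition to $(\tilde\cF,\mu)$ and then convert this weak convergence into uniform convergence on the continuity set $B \times B$ via a Portmanteau-type sandwich.

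The first step, and the key technical point, is to verify that $(\tilde\cF,\mu)$ again satisfies both \textbf{(LC)} and \textbf{(P)}. For \textbf{(LC)} (with the same rate $q$), I fix $(x,y)$ and intersect the two $\mu^\bN$-full-measure sets of sequences given by \textbf{(LC)} of $\cF$ applied separately to $x$ and to $y$; for $\mu^\bN$-a.e.\ $\bi$ this produces neighborhoods $B_x \ni x$ and $B_y \ni y$ with $\diam_D f_\bi^n(B_x),\,\diam_D f_\bi^n(B_y) \le q^n$ for all $n$, so that $B_x \times B_y$ is a neighborhood of $(x,y)$ with $\diam_{\tilde D} \tilde f_\bi^n(B_x \times B_y) \le q^n$. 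For \textbf{(P)}, I use the stronger \textbf{(S)} already available for $\cF$ by Proposition \ref{prop:sync-exp}: the same intersection-of-full-measure-sets trick applied to \textbf{(S)} for $(x_1,x_2)$ and $(y_1,y_2)$ yields, for a.e.\ $\bi$, $\tilde D(\tilde f_\bi^n(x_1,y_1), \tilde f_\bi^n(x_2,y_2)) \to 0$, whence $(\tilde\cF,\mu)$ satisfies \textbf{(S)} and in particular \textbf{(P)}. Consequently Proposition \ref{prop:sync-exp} applies to $(\tilde\cF,\mu)$, giving a unique $\mu$-stationary measure $\tilde\nu$ on $M \times M$. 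A direct check (using stationarity of $\nu$ for $\cF$) shows that the push-forward $\iota_*\nu$ of $\nu$ under the diagonal embedding $\iota(z) \eqdef (z,z)$ is $\mu$-stationary for $\tilde\cF$, so by uniqueness $\tilde\nu = \iota_*\nu$.

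With the hypotheses of the uniform weak convergence corollary verified for $(\tilde\cF,\mu)$, one obtains, for every continuous $\Phi \colon M \times M \to \bR$,
\[
\lim_{n\to\infty}\sup_{(x,y) \in M \times M}\Bigl|\int \Phi(X_n^x(\bi),X_n^y(\bi))\,d\mu^\bN(\bi) - \int \Phi\,d\tilde\nu\Bigr| = 0.
\]
It remains to replace $\Phi$ by $\mathbf{1}_{B \times B}$. Since $\tilde\nu$ sits on the diagonal, $\tilde\nu(B \times B) = \nu(B)$ and $\tilde\nu(\partial(B \times B)) \le \nu(\partial B) = 0$. Given any $\delta > 0$, a standard construction via distance-based bump functions produces continuous $\Phi^- \le \mathbf{1}_{B \times B} \le \Phi^+$ with $\int(\Phi^+ - \Phi^-)\,d\tilde\nu < \delta$; sandwiching $\mu^\bN(\{(X_n^x,X_n^y) \in B \times B\})$ between $\int \Phi^\pm(X_n^x,X_n^y)\,d\mu^\bN$ and using the uniform convergence of both sides to $\int \Phi^\pm\,d\tilde\nu$, which are within $\delta$ of $\nu(B)$, yields the proposition after letting $\delta \to 0$. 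The only genuinely non-formal step of the plan is thus the inheritance of \textbf{(LC)} and \textbf{(P)} by the product system; once that is in hand, the proposition reduces to a formal application of the machinery already developed in this section to $(\tilde\cF,\mu)$.
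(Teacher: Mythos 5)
Your argument is correct, but it differs substantially from the one in the paper. The paper proves Proposition~\ref{proclaim:conunifweak} by contradiction: it extracts a convergent subsequence $(x_k,y_k)\to(x,y)$ and, via a careful triangle inequality, controls the discrepancy $\big|\mu^{\bN}(X_{n_k}^{x_k},X_{n_k}^{y_k}\in B)-\nu(B)\big|$ in three pieces, using the Portmanteau theorem on the distributions \eqref{eqPort}, the local contraction property \textbf{(LC)} to merge $X_{n_k}^{x_k}$ with $X_{n_k}^{x}$ (and similarly for $y$), Proposition~\ref{prop:sync-exp} to merge $X_{n_k}^{x}$ with $X_{n_k}^{y}$, and an $\varepsilon$-collar $B_\varepsilon$ of $\partial B$ together with $\nu(\partial B)=0$ to control boundary effects. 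You instead lift everything to the diagonal-action product RDS $(\tilde\cF,\mu)$ on $M\times M$ and observe that it inherits both \textbf{(LC)} and \textbf{(P)} (indeed \textbf{(S)}) by intersecting the relevant full-measure sets of sequences for $x$ and $y$ separately; this is a clean and correct reduction, and since the diagonal push-forward $\iota_*\nu$ is $\mu$-stationary for $\tilde\cF$ and stationary measures are unique, $\tilde\nu=\iota_*\nu$, giving $\tilde\nu(B\times B)=\nu(B)$ and $\tilde\nu(\partial(B\times B))=0$. Applying the corollary (uniform weak convergence) to $(\tilde\cF,\mu)$ and then a standard continuous sandwich $\Phi^-\le\mathbf{1}_{B\times B}\le\Phi^+$ then yields the statement. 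What your route buys is conceptual economy: once the inheritance of \textbf{(LC)} and \textbf{(P)} by the product system is noted, the proposition becomes a formal consequence of the corollary rather than a fresh estimate, and the product-RDS observation is reusable. The paper's route is more self-contained and avoids introducing the product system, at the cost of the subsequence/triangle-inequality bookkeeping. Both are valid; yours is arguably the more structural argument.
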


\begin{proof}
Let $B\subset M$ be a set satisfying the hypotheses.
By contradiction, suppose that there exist $\delta\in(0,1)$ and some subsequence $(n_k)_k$  such that for all $k\in\bN$
\[
	\sup_{x,y\in M}\big|\mu^{\bN}(\{\bi\colon X_{n_k}^x(\bi),X_{n_k}^y(\bi) \in B\})-\nu(B)\big|
	> 2\delta.
\]
Then there exist two sequences $(x_k)_k$ and $(y_k)_k$ in $M$ such that for all $k\in\bN$
\begin{equation}\label{contrad}
	\big|\mu^{\bN}(\{\bi\colon X_{n_k}^{x_k}(\bi),X_{n_k}^{y_k}(\bi)\in B\})-\nu(B)\big|
	> \delta.
\end{equation}	
By compactness, up to possibly passing to some subsequence, we can assume that $x_k\to x$ and $y_k\to y$ as $k\to\infty$ for some $x,y\in M$.
By the triangle inequality, for all $k\in\bN$
\[\begin{split}
	\big|\mu^{\bN}&(\{\bi\colon X_{n_k}^{x_k}(\bi),X_{n_k}^{y_k}(\bi)\in B\})-\nu(B)\big|\\
&\leq \big|\mu^{\bN}(\{\bi\colon X_{n_k}^{x_k}(\bi),X_{n_k}^{y_k}(\bi)\in B\})-\mu^{\bN}(\{\bi\colon X_{n_k}^{x}(\bi)\in B\})\big|\\
&\phantom{\leq}+\big\lvert\mu^{\bN}(\{\bi\colon X_{n_k}^{x}(\bi))\in B\})-\nu(B)\big|.
\end{split}\]
To contradict \eqref{contrad} let us prove that the right-hand side in the above formula converges to $0$ as $k\to\infty$. Indeed, by \eqref{converweak:point}, the latter term converges to 0. For the first term, we need to investigate those realizations that simultaneously are either in $B$ or are not in $B$. For that, for    $\varepsilon>0$ sufficiently small consider the set $B_\varepsilon\eqdef\{x\in M\colon D(x,\partial B)>\varepsilon\}$ of points that are $\varepsilon$-distant from its boundary. 
Below, we will use our hypothesis $\nu(\partial B)=0$ in order to estimate the following terms separately
\begin{eqnarray}
&&\big|\mu^{\bN}(\{\bi\colon X_{n_k}^{x_k}(\bi)\in B\})-\nu(B)\big|\le\notag\\
&&\quad\quad\quad \big|\mu^{\bN}(\{\bi\colon X_{n_k}^{x}(\bi)\in B\})-\nu(B)\big|  \label{destriang:diracs-0}\\
&&\quad\quad\quad\quad
	+\int_{\{\bi\colon X_{n_k}^{x}(\bi)\in B_\varepsilon\}}
	\big|\mathbbm{1}_{B}( X_{n_k}^{x_k}(\bi))\mathbbm{1}_{B}( X_{n_k}^{y_k}(\bi))-\mathbbm{1}_{B} (X_{n_k}^{x}(\bi))\big|\,d\mu^{\bN}(\bi) \label{destriang:diracs-1}\\
&&\quad\quad\quad\quad
	+\int_{\{\bi\colon X_{n_k}^{x}(\bi)\not\in B_\varepsilon\}}\big|\mathbbm{1}_{B}( X_{n_k}^{x_k}(\bi))\mathbbm{1}_{B}( X_{n_k}^{y_k}(\bi))-\mathbbm{1}_{B} (X_{n_k}^{x}(\bi))\big|\,d\mu^{\bN}(\bi).\label{destriang:diracs-2}
\end{eqnarray}

Note again that, by \eqref{converweak:point}, the term in \eqref{destriang:diracs-0} tends to $0$ as $k\to\infty$.

Let $q\in(0,1)$ be an associated contracting rate with respect to \textbf{(LC)}.
By \textbf{(LC)} applied to $x$ and $y$, respectivamente, for almost every $\bi$ there exists $k_0(\bi)\in\bN$ such that for all $k\ge k_0(\bi)$ ,
\[
	\max\{D\big(X_{n_k}^{x_k}(\bi),X_{n_k}^{x}(\bi)\big),D\big(X_{n_k}^{y_k}(\bi),X_{n_k}^{y}(\bi)\big)
	\}\leq q^{n_k}.
\]
Moreover, by Proposition \ref{prop:sync-exp} applied to $x$ and $y$, for almost every $\bi$ there exist $C(\bi)>1$ such that for all $k>0$
\[
	D\big(X_{n_k}^{x}(\bi),X_{n_k}^{y}(\bi)\big)
	\leq C(\bi) q^{n_k}.
\]
Hence, for almost every $\bi$ for $k\ge k_0(\bi)$ large enough such that $C(\bi)q^{n_k}<\varepsilon/2$, we have
\[
	\max\{D\big(X_{n_k}^{x_k}(\bi),X_{n_k}^{x}(\bi)\big),
		D\big(X_{n_k}^{y_k}(\bi),X_{n_k}^{y}(\bi)\big),
		D\big(X_{n_k}^{x}(\bi),X_{n_k}^{y}(\bi)\big)\}
	\leq C(\bi)q^{n_k}
	<\frac\varepsilon2.
\]
Note that whenever $X_{n_k}^{x}(\bi)\in B_\varepsilon$ by the above, for almost every $\bi$ and large enough $k$ either both $X_{n_k}^{x_k}(\bi),X_{n_k}^{y_k}(\bi)\in B$ or $\not\in B$ and hence we have 
\[
\big|\mathbbm{1}_{B}( X_{n_k}^{x_k}(\bi))\mathbbm{1}_{B}( X_{n_k}^{y_k}(\bi))
	-\mathbbm{1}_{B} (X_{n_k}^{x}(\bi))\big|=0;
\]
Hence, it follows from the Dominated convergence theorem, that
\[
	\lim_{k\to\infty}\int_{\{\bi\colon X_{n_k}^{x}(\bi)\in B_\varepsilon\}}
	\big|\mathbbm{1}_{B}( X_{n_k}^{x_k}(\bi))\mathbbm{1}_{B}( X_{n_k}^{y_k}(\bi))
		-\mathbbm{1}_{B} (X_{n_k}^{x}(\bi))\big|\,d\mu^{\bN}(\bi)
	=0.
\]
This proves the limit behavior of \eqref{destriang:diracs-1} as $k\to\infty$.

To estimate \eqref{destriang:diracs-2}, note that by the Portmanteau theorem applied to the distribution \eqref{eqPort} and the closed set $M\setminus B_\varepsilon$, we get
\[\begin{split}
    \limsup_{k\to\infty}\int_{\{\bi\colon X_{n_k}^{x}(\bi)\not\in B_\varepsilon\}}
    	&\big|\mathbbm{1}_{B}( X_{n_k}^{x_k}(\bi))\mathbbm{1}_{B}( X_{n_k}^{y_k})
		-\mathbbm{1}_{B} (X_{n_k}^{x}(\bi))\big|\,d\mu^{\bN}(\bi)\\
	&\leq \limsup_{k\to\infty}\mu^{\bN}\left(\{\bi\colon X_{n_k}^{x}(\bi)\notin B_\varepsilon\}\right)\\
	&\leq \nu\left( M\setminus B_\varepsilon\right).
\end{split}\]
Letting $\varepsilon\to0$ and using $\nu(\partial B)=0$, we get $\nu\left( M\setminus B_\varepsilon\right)\to0$. As $\varepsilon>0$ was arbitrary in our above arguments, this together implies that
\[
\lim_{k\to\infty}\big|\mu^{\bN}(\{\bi\colon X_{n_k}^{x_{n_k}}(\bi)\in B\})-\nu(B)\big|=0,
\]
proving the limit behavior of \eqref{destriang:diracs-2}. 

This together is in contradiction to \eqref{contrad}. This finishes the proof of the proposition.
\end{proof}

We are now ready to prove Theorem \ref{teo:sync-on-average}. 

\begin{proof}[Proof of Theorem \ref{teo:sync-on-average}]
Assume that $(\cF,\mu)$ satisfies \textbf{(P)} and \textbf{(LC)} and as above let $q\in(0,1)$ be an associated contraction rate. Fix some point $x^{\ast}\in\supp\nu$. Choose $r\in(0,1)$ sufficiently small such that
\begin{equation}\label{eq:r-ast}
    \nu\left(\left\{x\in M\colon D(x,x^{\ast})=r\right\}\right)=0
\end{equation}
and hence, by this choice, the set $B\eqdef\{x\colon D(x,x^\ast)<r\}$ satisfies $\nu(B)>0$ and $\nu(\partial B)=0$.
By Lemma \ref{lemma:eq:diam-Omega}, we can assume that $r $ is small enough to have
\[	\delta\eqdef\mu^\bN(\Omega)>0
	\quad\text{ where }\quad
	\Omega 
	\eqdef\big\{\bi\in\cI^{\bN}  \colon \mbox{diam}_D( f_\bi^n(B))
	\le q^n \mbox{ for all }n\in\bN\big\}.
\]

Fix $\alpha\in(0,1)$. For every $n\in\bN$, let
\begin{equation}\label{def:an}
	a_n \eqdef \sup_{x,y \in M}  \sum_{m=1}^{n} \int D^\alpha(X_m^x(\bi),X_m^y(\bi))\,d\mu^\bN(\bi).
\end{equation}
Note that $(a_n)_n$ is increasing. 

For what is below, denote by $\pi_n(\Omega)$ the projection of $\Omega$ on $\cI^n$, that is,
\[
	\pi_n(\Omega)
	\eqdef \big\{ (i_1, \ldots, i_n) \in \cI^n \colon
	 (i_1, \ldots, i_n) \times \cI^\bN \cap \Omega \neq \emptyset \big\}.
\]

Let us show the following claim.

\begin{claim}\label{someclaim}
	For every $z, w \in B$ and $n\in\bN$,
\[
	\sum_{m=1}^{n} \int D^\alpha(X_m^w(\bi),X_m^z(\bi))\,d\mu^\bN(\bi)
	\le 2(q^{\alpha}+q^{2\alpha}+\cdots+q^{n\alpha})+a_n(1-\delta).
\]	 
\end{claim}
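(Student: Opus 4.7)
The plan is to decompose the integral according to a stopping time that records the first instant the $q^m$-contraction of $B$ fails. Define
\[
\tau(\bi)\eqdef\inf\{m\ge 1\colon\diam_D f_\bi^m(B)>q^m\}
\]
(with $\inf\emptyset=\infty$), so that $\{\tau=\infty\}=\Omega$ and $\mu^\bN(\tau<\infty)=1-\delta$. Crucially, each event $\{\tau=k\}$ lies in $\sigma(i_1,\ldots,i_k)$, which is what makes the strong Markov property at time $\tau$ applicable.

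The two ``contracting'' contributions which together yield the factor $2(q^\alpha+\cdots+q^{n\alpha})$ come from the indices at which the $q^m$-contraction of $B$ still holds. On $\{\tau>n\}$, for every $m\le n$ one has $\diam_D f_\bi^m(B)\le q^m$, hence $D^\alpha(X_m^w(\bi),X_m^z(\bi))\le q^{m\alpha}$ (since $w,z\in B$); integration gives a contribution of at most $\sum_{m=1}^n q^{m\alpha}$. On $\{\tau=k\le n\}$, the same bound applies for $m<k$, contributing an additional $(1-\delta)\sum_{m=1}^n q^{m\alpha}\le\sum_{m=1}^n q^{m\alpha}$ after summing over $k$.

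For the indices $m\ge k$ on $\{\tau=k\}$, I would apply the strong Markov property. Writing $m=k+\ell$ with $\ell\ge 1$ and using $X_{k+\ell}^x(\bi)=X_\ell^{X_k^x(\bi)}(\sigma^k\bi)$ together with the independence of $\sigma^k\bi$ from $(i_1,\ldots,i_k)$,
\[
\int_{\{\tau=k\}}D^\alpha(X_{k+\ell}^w,X_{k+\ell}^z)\,d\mu^\bN
=\int_{\{\tau=k\}}c_\ell\bigl(X_k^w(\bi),X_k^z(\bi)\bigr)\,d\mu^k,
\]
where $c_\ell(x,y)\eqdef\int D^\alpha(X_\ell^x,X_\ell^y)\,d\mu^\bN$. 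By the definition of $a_n$, $\sum_{\ell=1}^{n-k}c_\ell(x,y)\le a_{n-k}\le a_n$ uniformly in $x,y\in M$; summing the resulting estimate over $k=1,\ldots,n$ and using $\mu^\bN(\tau\le n)\le 1-\delta$ produces the term $(1-\delta)a_n$.

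The hard part is controlling the boundary contribution at $m=\tau$, namely $\sum_{k=1}^n\int_{\{\tau=k\}}D^\alpha(X_k^w,X_k^z)\,d\mu^\bN$. Since no uniform modulus of continuity for $\cF$ is assumed, the only available pointwise estimate is the trivial $D^\alpha\le(\diam_D M)^\alpha$, which yields a term of order $(1-\delta)(\diam_D M)^\alpha$. To match the precise form of the claim, this must be absorbed into $2\sum_{m=1}^n q^{m\alpha}\ge 2q^\alpha$; this is possible by appealing to Lemma \ref{lemma:eq:diam-Omega} to choose the neighborhood $B$ (and hence $\delta$) close enough to one (depending only on $q$, $\alpha$, and $\diam_D M$) that $(1-\delta)(\diam_D M)^\alpha\le q^\alpha$.
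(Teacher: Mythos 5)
Your argument runs parallel to the paper's: both proofs split the index set according to the first moment the \textbf{(LC)}-control of $B$ is lost. You use the genuine stopping time $\tau=\inf\{m\colon\diam_D f_\bi^m(B)>q^m\}$ and the strong Markov property at $\tau$; the paper instead works with the sets $G_n^k$ built from the projections $\pi_k(\Omega)$, which record the same information block by block. Structurally these are the same decomposition, and your Markov step is correct (the key observation $\{\tau=k\}\in\sigma(i_1,\ldots,i_k)$ is exactly what makes it work). The one place where you genuinely differ from the paper is the boundary index $m=\tau$, and here your treatment is the more careful one. The paper's displayed bound on $G_n^{k\vert k}$ is $\sum_{m=1}^k D^\alpha(X_m^w,X_m^z)\le q^\alpha+\cdots+q^{k\alpha}$, but membership in $G_n^{k\vert k}$ (that is, $(i_1,\ldots,i_{k-1})\in\pi_{k-1}(\Omega)$) only controls the iterates $m\le k-1$; the $m=k$ summand has no better pointwise bound than $(\diam_D M)^\alpha$, exactly as you say. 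Your remedy — use Lemma~\ref{lemma:eq:diam-Omega} to pick $B$ so that $\delta$ is close enough to $1$ that $(1-\delta)(\diam_D M)^\alpha\le q^\alpha$ — is correct and compatible with the remainder of the proof of Theorem~\ref{teo:sync-on-average}, which uses only that $\delta>0$ and $\nu(B)>0$. One small bookkeeping point: you assign a full $\sum_m q^{m\alpha}$ to the contribution from $\{\tau>n\}$ and another full $\sum_m q^{m\alpha}$ to $\{\tau\le n,\ m<\tau\}$, which already spends the budget $2\sum_m q^{m\alpha}$ and leaves no room for the boundary term; but since $\mu^\bN(\tau>n)+\mu^\bN(\tau\le n)=1$, those two contributions together cost only a single $\sum_m q^{m\alpha}$, so the surviving $\sum_m q^{m\alpha}\ge q^\alpha$ absorbs $(1-\delta)(\diam_D M)^\alpha$ as required. (Also a typo: in the Markov step ``indices $m\ge k$'' should read $m>k$, consistent with your substitution $m=k+\ell$, $\ell\ge1$.)
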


\begin{proof}
To prove the claim, let us first write
\begin{multline*}
    \sum_{m=1}^{n}
     \int D^\alpha(X_m^w(\bi),X_m^z(\bi))\,d\mu^\bN(\bi)\\
    =\int_{\pi_n(\Omega)}\sum_{m=1}^{n} D^\alpha(X_m^w(\bi),X_m^z(\bi))\,d\mu^n(\bi)
    +	\int_{\cI^n\setminus \pi_n(\Omega)}\sum_{m=1}^{n} D^\alpha(X_m^w(\bi),X_m^z(\bi))\,d\mu^n(\bi)\\
	 =\text{I}+\text{II}.
\end{multline*}

Let us first estimate integral II. For \( k \leq n \), consider the set
\[
G_n^k \eqdef \left\{ (i_1, \ldots, i_n) \in \cI^n \colon (i_1, \ldots, i_{k-1}) \in \pi_{k-1}(\Omega), (i_1, \ldots, i_k) \notin \pi_{k}(\Omega) \right\}
\]
of all finite sequences which in their first \( k - 1 \) elements coincide with some sequence
in \( \Omega \), but in their first \( k \) elements do not. Note that
\[
	\cI^n\setminus \pi_n(\Omega) = \bigcup_{k=1}^{n} G_n^k 
	\quad\text{ and }\quad 
	G_n^k \cap G_n^\ell = \emptyset \text{ for $k\neq \ell$}.
\]
We easily see that
\begin{equation}\label{eq:med:ast}
	\mu^n(\cI^n\setminus \pi_n(\Omega)) 
	\leq \mu^{\bN}(\cI^\bN\setminus \Omega) 
	= 1 - \mu^{\bN}(\Omega) 
	= 1 - \delta.
\end{equation}
Further, let \( G_n^{k \vert k} \) be the projection of \( G_n^k \) on its first \( k \) coordinates, i.e.
\[
G_n^{k \vert k} \eqdef \left\{ (i_1, \ldots, i_k) \in \cI^k : \exists (i_1, \ldots, i_k, i_{k+1}, \ldots, i_n) \in G_n^k \right\}.
\]
Note that the sets \( G_n^k \), \( k = 1, \ldots, n \), are pairwise disjoint. 
For the following estimate, note that the mapping $\bi\mapsto X_m^w(\bi)$ in fact only depends on the first $m$ coordinates of $\bi=(i_1,\ldots,i_m,i_{m+1},\ldots)$. Hence, to simplify our exposition, by a slight abuse of notation, in the following let us write $X_m^w(\bi)=X_m^w(i_1,\ldots,i_n)$, for any $n\ge m$.
Moreover, \( G_n^k = G_n^{k \vert k} \times \cI^{n-k} \) and hence, by Fubini’s theorem,
\begin{align*}
 \text{II}&=\sum_{k=1}^n\int\limits_{G_n^k}\, \sum_{m=1}^{n} 
 	D^\alpha(X_m^w(\bi),X_m^z(\bi))\,d\mu^n(\bi)\\
 &=\sum_{k=1}^n\int\limits_{G_n^{k \vert k}} \int\limits_{\cI^{n-k}}\, 
 	\sum_{m=1}^{n} 
 	D^\alpha(X_m^w(i_1, \ldots, i_n),X_m^z(i_1, \ldots, i_n))\,\\
 &\quad\quad\quad\quad\quad\quad\quad d\mu^{n-k}( i_{k+1}, \ldots, i_n)d\mu^{k}(i_1, \ldots, i_k).
\end{align*}
For \( k = 1, \ldots, n \), we have
\begin{align*}
 &\int\limits_{G_n^{k \vert k}} \int\limits_{\cI^{n-k}}\hspace{-0.1cm} \sum_{m=1}^{n} 
 D^\alpha\big(X_m^w(i_1, \ldots, i_n),X_m^z(i_1, \ldots, i_n)\big)\,
 	d\mu^{n-k}( i_{k+1}, \ldots, i_n)d\mu^{k}(i_1, \ldots, i_k)\\
&=\int\limits_{G_n^{k \vert k}} \int\limits_{\cI^{n-k}}\left( \sum_{m=1}^{k} 
 D^\alpha\big(X_m^w(i_1, \ldots, i_n),X_m^z(i_1, \ldots, i_n)\big)\right.\\
&+\left.
	 \sum_{m=k+1}^{n} 
 D^\alpha(X_m^w(i_1, \ldots, i_n),X_m^z(i_1, \ldots, i_n))\right)
 	d\mu^{n-k}( i_{k+1}, \ldots, i_n)d\mu^{k}(i_1, \ldots, i_k)		\\
 &=\int\limits_{G_n^{k \vert k}} \, \sum_{m=1}^{k} 
 D^\alpha\big(X_m^w(i_1, \ldots, i_k),X_m^z(i_1, \ldots, i_k)\big)\,
 	d\mu^{k}(i_1, \ldots, i_k)\\
 & \quad \quad\quad \quad  
 +\int\limits_{G_n^{k \vert k}} \int\limits_{\cI^{\bN}}\, \sum_{\ell=1}^{n-k} 
 	D^\alpha\big(X_\ell^{f_{i_k}\circ\cdots\circ f_{i_1}(w)}(\bj),X_\ell^{f_{i_k}\circ\cdots\circ f_{i_1}(z)}(\bj)\big)\,
	d\mu^{\bN}( \bj)d\mu^{k}(i_1, \ldots, i_k)
\end{align*}
Recall now that $z,w\in B$ and that $\Omega$ is the set of sequences $\bi$ which contract $B$ (see \eqref{recall}).  Hence, we get
\begin{align*}	
 \text{II}
 &\leq\sum_{k=1}^n (q^\alpha+q^{2\alpha}+\cdots+q^{k\alpha})\mu^k(G_n^{k \vert k})
 	+  \sum_{k=1}^n a_{n-k}\int_{G_n^{k \vert k}} d\mu^{k}(i_1, \ldots, i_k)\\
\text{\tiny{(using $a_{n-k}<a_n$)}}	
&\le 	(q^\alpha+q^{2\alpha}+\cdots+q^{n\alpha})\sum_{k=1}^n\mu^k(G_n^{k \vert k})
	+a_n\sum_{k=1}^n \mu^k(G_n^{k \vert k})\\
 &\leq (q^\alpha+q^{2\alpha}+\cdots+q^{n\alpha}+a_n)\mu^n(G_n^{k}).
\end{align*}
By \eqref{eq:med:ast}, we conclude
\[
\text{II}\leq(1-\delta)(q^{\alpha}+q^{2\alpha}+\cdots+q^{n\alpha}+a_n).
\]

Let us now estimate the integral I. This becomes straightforward when we recall that $z,w\in B$ and take into account the definition of $\Omega$. Indeed,
\[
\text{I}
= \int\limits_{\pi_n(\Omega)}\sum_{m=1}^{n} D^\alpha(X_m^w(\bi),X_m^z(\bi))\,d\mu^n(\bi)
\leq q^{\alpha}+q^{2\alpha}+\cdots+q^{n\alpha}.
\]
Consequently, we have for $n\in\bN$ and every $z,w\in B$
\[    \sum_{k=1}^{n} \int D^{\alpha}(X_k^w(\bi),X_k^z(\bi))\,d\mu^\bN(\bi)
    \leq 2(q^{\alpha}+q^{2\alpha}+\cdots+q^{n\alpha})+a_n(1-\delta).
\]
This proves the assertion.
\end{proof}

By Proposition \ref{proclaim:conunifweak}, there exists $\kappa$ (and hence for all bigger indices) for all $x,y$ we have
\begin{equation}\label{canto}
	\mu^\bN(\Delta_\kappa^{x,y})\geq\frac12\nu(B),\quad\text{ where }\quad
	\Delta_\kappa^{x,y}
	\eqdef\{\bi\in\cI^\bN\colon X_\kappa^x(\bi),X_\kappa^y(\bi)\in B\}.
\end{equation}
Note again that $\bi\mapsto X_\kappa^w(\bi)$ only depends on the first $\kappa$ entries. Let us write $\Delta_\kappa^{x,y}=\pi_k(\Delta_\kappa^{x,y})\times\cI^\bN$. As $\mu^{\bN}$ is a product measure, by Fubini's theorem, for any $n>\kappa$ and every $x,y\in M$ we obtain 
\[\begin{split}
    \sum_{k=\kappa}^{n}& \int D(X_k^x(\bi),X_k^y(\bi))\,d\mu^\bN(\bi)\\
    &=\sum_{k=\kappa}^{n}\,\, \int\limits_{\cI^\bN}\int\limits_{\pi_\kappa(\Delta_\kappa^{x,y})} 
    	D(X_k^x(i_1,\ldots,i_\kappa\bj),X_k^y(i_1,\ldots,i_\kappa\bj))\,d\mu^\kappa(i_1,\ldots,i_\kappa)d\mu^\bN(\bj)\\
&=\int\limits_{\pi_\kappa(\Delta_\kappa^{x,y})}\int\limits_{\cI^{\bN}}
	\sum_{\ell=1}^{n-\kappa}
	D(X_\ell^{X_\kappa^x(i_1,\ldots,i_\kappa)}(\bj),X_\ell^{X_\kappa^y(i_1,\ldots,i_\kappa)}(\bj))\,
		d\mu^{\bN}(\bj)d\mu^\kappa(i_1,\ldots,i_\kappa)\\
&\phantom{=}		
	+	\int\limits_{\cI^\kappa\setminus\pi_\kappa(\Delta_\kappa^{x,y})}\int\limits_{\cI^{\bN}}
	\sum_{\ell=1}^{n-\kappa}
	D(X_\ell^{X_\kappa^x(i_1,\ldots,i_\kappa)}(\bj),X_\ell^{X_\kappa^y(i_1,\ldots,i_\kappa)}(\bj))\,
		d\mu^{\bN}(\bj)d\mu^\kappa(i_1,\ldots,i_\kappa)
\end{split}\]
Let us now apply Claim \ref{someclaim} to $w=X_\kappa^x(i_1,\ldots,i_\kappa)$ and $z=X_\kappa^y(i_1,\ldots,i_\kappa)$. For that recall that if $(i_1,\ldots,i_\kappa)\in\pi_\kappa(\Delta_\kappa^{x,y})$ then $w,z\in B$. Hence, recalling also \eqref{def:an}, we get
\[\begin{split}
	\sum_{k=\kappa}^{n}& \int D(X_k^x(\bi),X_k^y(\bi))\,d\mu^\bN(\bi)\\
	&\leq \mu^\kappa(\pi_\kappa(\Delta_\kappa^{x,y}))\left(2(q^{\alpha}+q^{2\alpha}+\cdots+q^{(n-\kappa)\alpha})
	+a_{n-\kappa}(1-\delta)\right)\\
	&\phantom{\le}
	+a_{n-\kappa}\mu^\kappa(\cI^\kappa\setminus\pi_\kappa(\Delta_\kappa^{x,y}))\\
	&\le 2(q^{\alpha}+q^{2\alpha}+\cdots+q^{n\alpha})
		+ a_n\big((1-\delta)\mu^\kappa(\pi_\kappa(\Delta_\kappa^{x,y})\big)
		+ a_n\big(1-\mu^\kappa(\pi_\kappa(\Delta_\kappa^{x,y}))\big)\\
	&\le 	2(q^{\alpha}+q^{2\alpha}+\cdots+q^{n\alpha})
	+a_n (1-\delta \frac12\nu(B)),
\end{split}\]
where we also used  \eqref{canto}.

As $x,y\in M$ were arbitrary, we get
\[
a_n\leq 2(q^{\alpha}+q^{2\alpha}+\cdots+q^{n\alpha})+a_n (1-\delta \frac12\nu(B))+a_{\kappa},
\]
that is,
\[
	a_n
	\leq \frac{4}{\delta\nu(B)}(q^{\alpha}+q^{2\alpha}+\cdots+q^{n\alpha})+\frac{2a_{\kappa}}{\delta\nu(B)},
\]
for all $n>\kappa$. This implies the assertion.
\end{proof}

\section{Limit laws: Proof of Theorem \ref{teo3.1-CLTbisneu}}\label{sec33333}

In this section, assume that $(\cF,\mu)$ is an RDS of continuous maps on a compact metric space $(M,D)$ satisfying \textbf{(LC)} and \textbf{(P)}. We establish the limit laws SLLN, CLT, and LIL for the Markov chain $(X_n^x)_{n\geq 0}$ defined as in \eqref{chain:PropEst}, started at any point $x\in M$.

First note that independent random applications of maps $f\in\cF$ from the RDS $(\cF,\mu)$ gives rise to the Markov chain $(X_n^x)_{n\geq 0}$ with \emph{transition operator}
\[
	\bP(x,A)
	\eqdef \int \delta_{f(x)}(A)\,d\mu(f).
\]
Just note that for the Markov operator defined as in \eqref{cFast}, for any Borel probability measure $\nu$ it holds
\[
	\bM\nu(A)
	= \int\bP(x,A)\,d\bM\nu(x).
\]
For every bounded measurable function $h\colon M\to\bR$  define its image $Ph$ by the \emph{transfer operator},
\[
Ph(x)\eqdef \int h(y) \,\bP(x,dy).
\]
For every $x\in M$,
\[
Ph(x)=\int h(f(x))\,d\mu(f),
\]
and for $n\in\bN$
\begin{equation}\label{using}
P^nh(x)=\int h(f_{\bi}^n(x))\,d\mu^{\bN}(\bi).
\end{equation}
Moreover, if $h$ is a continuous function, $Ph$ is also a continuous function on $M$.%
\footnote{This property is also called weak Feller property.}
Notice that for the $\mu$-stationary measure $\nu$, for any $n\in\bN$ we have
\begin{equation}\label{stattt}
	\nu(h)
	= \int h\,d\nu
	= \int Ph\,d\nu
	= \int P^nh\,d\nu.
\end{equation}

To prove Theorem \ref{teo3.1-CLTbisneu}, fix some H\"older continuous function $h\colon M\to\bR$ and $\alpha\in (0,1]$ and $H>0$ satisfying
\[
|h(x)-h(y)|\leq  H D^{\alpha}(x,y),\quad\mbox{for all}\quad x,y\in M.
\]

\subsection{CLT}

By Theorem \ref{teo:sync-on-average}, there is $C'>0$ such that for every $x,y\in M $ and for all $n\in\bN$
\[
	\sum_{k=1}^n\int D^{\alpha}(X_k^x(\bi),X_k^y(\bi))\,d\mu^{\bN}(\bi)
	\leq C'\sum_{k=1}^n q^{k\alpha},
\]
which implies
\begin{equation}\label{eq:CLT-11}
	\sum_{k=1}^n\int |h(f_\bi^k(x))-h(f_\bi^k(y))|\,d\mu^\bN(\bi)
	\le HC'\sum_{k=1}^nq^{k\alpha}
\end{equation}
Setting $C=HC'\sum_{k=1}^{\infty}\lambda^{k\alpha}$, it follows from \eqref{using} and the above that
\[\begin{split}
 	\Big| \sum_{k=1}^{n}P^kh(x)-\sum_{k=1}^{n}P^kh(y)\Big|
	&\le \sum_{k=1}^n\Big| \int \big(h(f_\bi^k(x))-h(f_\bi^k(y))\big)\,d\mu^\bN(\bi)\Big|\\
	&\le \sum_{k=1}^n\int |h(f_\bi^k(x))-h(f_\bi^k(y))|\,d\mu^\bN(\bi)\\
	&\le HC'\sum_{k=1}^{n}q^{k\alpha}\leq HC'\sum_{k=1}^{\infty}q^{k\alpha}
	= C.
\end{split}\]
 Therefore, using also \eqref{stattt}, we get that
\begin{align}\begin{split}\label{bound:operatorL2}
    \Big| \sum_{k=1}^{n}P^kh(x)-n\nu(h)\Big|
    &=  \Big| \sum_{k=1}^{n}P^kh(x)-\int\sum_{k=1}^{n}P^kh(y)\,d\nu(y)\Big|\\
    &\leq \int \Big| \sum_{k=1}^{n}P^kh(x)-\sum_{k=1}^{n}P^kh(y)\Big|\,d\nu(y)\\
    &\leq C.
\end{split}\end{align}
Therefore, by \cite{DerrLin:2003}, the limit $\sigma^2(h)$ defined in \eqref{defsigma2} exists and is finite. Moreover, for $\nu$-almost all $x$
    \[
    \frac{1}{\sqrt{n}}\sum_{k=0}^{n-1} h\left(X_k^x\right)
    \to \mathcal N(\nu(h),\sigma^2(h))
        \quad\text{ as }n\to\infty.
	\]
Equivalently, by L\'evy's continuity theorem, we can write
\begin{equation}\label{conv:ch.f}
	\lim_{n\to\infty}
	\int\exp\Big(it\frac{h(X_n^x(\bi))+\cdots+h(X_1^x(\bi))}{\sqrt{n}}\Big)\,d\mu^{\bN}(\bi)
	=\exp\Big(it\nu(h)-\frac{1}{2}t^2\sigma^2(h)\Big).
\end{equation}

Let us argue that \eqref{conv:ch.f} holds \emph{for any} $x\in M $.
Fix some $y\in\supp(\nu)$ such that \eqref{conv:ch.f} holds. Given $n\in\bN$, together with \eqref{eq:CLT-11} we have
\[\begin{split}
    &\Big| \hspace{-0.1cm}\int\hspace{-0.1cm}
     \Big(\hspace{-0.1cm}\exp\hspace{-0.05cm}
     	\Big(\hspace{-0.05cm}it\frac{h(X_n^x(\bi))+\cdots+h(X_1^x(\bi))}{\sqrt{n}}\Big)
    		\hspace{-0.1cm}-\exp\hspace{-0.05cm}
	\Big(\hspace{-0.05cm}it\frac{h(X_n^y(\bi))+\cdots+h(X_1^y(\bi))}{\sqrt{n}}\Big)\Big)
		\,d\mu^{\bN}(\bi)\Big|\\
    &\leq\frac{|t|}{\sqrt{n}}\int \sum_{k=1}^{n}\left|h(X_k^x(\bi))-h(X_k^y(\bi)) \right| \,d\mu^{\bN}(\bi)\\
    &=\frac{|t|}{\sqrt{n}} \sum_{k=1}^{n}\int\left|h(f_\bi^n(x))-h(f_\bi^k(k)) \right| \,d\mu^{\bN}(\bi)\\
    &\leq\frac{|t|}{\sqrt{n}}HC' \sum_{k=1}^{n} q^{k\alpha}
    \leq\frac{|t|}{\sqrt{n}}HC' \sum_{k=1}^\infty q^{k\alpha}
    = \frac{|t|}{\sqrt n}C.
\end{split}\]
Since the latter term converges to 0 as $n\to\infty$, we conclude that \eqref{conv:ch.f} holds for every $x\in M $. Therefore, the CLT holds for the Markov chain $(X_n^x)_{n\ge0}$.

\subsection{LIL}

We assume, without loss of generality, that $\nu(h)=0$.
Note that the Markov chain $(X_n)_{n\geq 0}$ is a stationary stochastic sequence with initial distribution $\nu$. By Kolmogorov extension theorem, $(X_n)_{n\geq 0}$ can be embedded in a two-sided stationary stochastic sequence $(Y_n)_{n\in\bZ}$ on some
probability space $(\Omega,\mathcal{A},\prob)$ with law $\nu$. For every $m\in\bZ$ and $n\geq0$ the random vectors $(Y_m,\ldots,Y_{m+n})$ and $(X_0,\ldots,X_n)$ are equal in distribution.

 Let $\mathcal{A}_0$ be the sigma-algebra generated by $(Y_n)_{n\leq 0}$. For $Y$ a random variable on $(\Omega,\mathcal{A},\prob)$, define $\mathbb{E}(Y)\eqdef \int Y d\prob$ and let $\mathbb{E}(Y|\mathcal{A}_0)$ be the conditional expectation of $Y$ given $\mathcal{A}_0$. Hence,
\begin{align*}
    \lVert\mathbb{E}(h(Y_1)+\cdots+h(Y_n)|\mathcal{A}_0)\rVert^2&=\int_{ M }\lvert \mathbb{E}(h(Y_1)+\cdots+h(Y_n)|Y_0=x)\rvert^2d\nu(x)\\
    &=\int_{ M }\lvert Ph(x)+\cdots+P^nhx)\rvert^2d\nu(x),
\end{align*}
where $\lVert\cdot\rVert $ denotes the norm in $L^2(\nu)$.
By \eqref{bound:operatorL2},
\begin{align*}
     \lVert\mathbb{E}(h(Y_1)+\cdots+h(Y_n)|\mathcal{A}_0)\rVert^2
    \leq C^2<\infty.
\end{align*}
Therefore,
\[
\sum_{n=1}^{\infty}\frac{\log n}{n^{3/2}}\lVert\mathbb{E}(h(Y_1)+\cdots+h(Y_n)|\mathcal{A}_0)\rVert<\infty.
\]
By \cite[Corollary 1]{ZhaoWood:2008}, $\prob$-almost surely
\[
\limsup_{n\to\infty}\frac{h(Y_1)+\cdots+h(Y_n)}{\sqrt{2n\log \log (n)}}=t,
\]
where $t^2(h)\eqdef\lim_{n\to\infty}\frac{1}{n}\mathbb{E}(h(Y_1)+\cdots+h(Y_n))^2$. Since $(Y_n)_{n\geq 0}$ and $(X_n)_{n\geq 0}$ have the same law,
we obtain that $t^2(h)=\sigma^2(h)$. Moreover, for $(\mu^{\bN}\otimes\nu)$-almost every $(\bi,y)$
\begin{align}\label{lil:onsupp}
\limsup_{n\to\infty}\frac{h(X^y_1(\bi))+\cdots+h(X^y_n(\bi))}{\sqrt{2n\log \log (n)}}=\sigma.
\end{align}
To conclude that \eqref{lil:onsupp} holds for every $x$ and $\mu^{\bN}$-almost every $\bi$, recall now that by Proposition \ref{prop:sync-exp} we have exponential synchronization. Given any $x\in M$, take $y\in\supp(\nu)$ such that \eqref{lil:onsupp} holds for $\mu^{\bN}$-almost every $\bi$ and apply Proposition \ref{prop:sync-exp} to $x,y$ to conclude.

\subsection{SLLN}

Similarily to the proof of Proposition \ref{prop:sync-exp}, let us consider the auxiliary set $\mathcal{E}'$ of points $(\bi,x)\in \cI^{\bN}\times M$ such that the two following conditions hold:
\begin{itemize}
    \item[(a')] $\lim_{n\to\infty}\frac{1}{n}\sum_{k=0}^{n-1} h\left(X_k^x(\bi)\right)=\nu(h)$,
    \item[(b')] there exists a neighborhood $B$ of $x$ such that for all $n\in\bN$, $\mbox{diam}_D\left( f_\bi^n(B)\right)
	\le q^n$.
\end{itemize}
Note that $\mathcal{E}'$ is invariant by the skew-product $T$. Now, let us show that $\mathcal{E}'$ has full $\mu^{\bN}\otimes\nu$-measure.
By Birkhoff's Ergodic Theorem,
\[
	\left(\mu^{\bN}\otimes\nu\right)
		\Big(\Big\{(\bi,x)\in \cI^{\bN}\times M
	\colon\mbox{ ($a'$) holds}\Big\}\Big)=1.
\]
Since $\mu^{\bN}\otimes\nu=\int_{ M }\mu^{\bN}\otimes\delta_x \,d\nu(x)$, we can apply \textbf{(LC)}  to get
\[
\left(\mu^{\bN}\otimes\nu\right)\left(\left\{(\bi,x)\in \cI^{\bN}\times M\colon \mbox{ (b') holds}\right\}\right)=1.
\]
Therefore,
$
\left(\mu^{\bN}\otimes\nu\right)\left(\mathcal{E}'\right)=1.
$

Note that \[\mathcal{E}'=\bigcup_{\bi\in\cI^{\bN}}\{\bi\}\times U(\bi),\] where $U(\bi)$ is the set of all points $x\in M $ such that (a') and (b') hold. Let us prove that for every $\bi$ the set $U(\bi)$ is open in $ M $. Indeed, given $\bi\in\cI^{\bN}$, either there is no $x\in U(\bi)$ such that (b') holds and hence $U(\bi)$ is empty and, in particular, open. Or, $U(\bi)$ is nonempty and for every $x\in U(\bi)$ there exists an open neighborhood $B$ of $x$ such that (b') holds. This together with the fact that (a') holds for $x$, implies
\[
 \lim_{n\to\infty}\frac{1}{n}\sum_{k=0}^{n-1} h\left(X_k^z(\bi)\right)=\nu(h),
\]
for every $z\in B$. Taking the same neighborhood $B$ for every $z\in B$, it follows that (b') holds for $(\bi,z)$ and we conclude that $B\subset U(\bi)$. Consequently, $U(\bi)$ is open in $ M $.

Using uniqueness of $\nu$ and applying \cite[Proposition 4.2]{Mal:17} to the set $\mathcal{E}'$,  we get for every $x\in M$
\[
\left(\mu^{\bN}\otimes\delta_x\right)\left(\mathcal{E}'\right)=1.
\]
This completes the proof of SLLN.
\qed

\section{Large deviations of synchronization rates and Lyapunov exponents: IFSs on $\bS^1$ }\label{sec:proooof}

Through this section, we consider the particular case of $(\cF,\mu)$ being an IFS with probabilities on $\bS^1$. More precisely, consider a finite collection of circle $C^1$ diffeomorphisms $\cF=\{f_0,\ldots,f_{N-1}\}$ and we assume that \textbf{(H)} and \textbf{(P)} are satisfied. Let $\cI=\{0,\ldots,N-1\}$ and consider the Bernoulli probability measure $\mu^{\bN}$ induced by a probability vector $(p_0,\ldots,p_{N-1})$, where $\mu(\{i\})=p_i$ for $i\in\{0,\ldots,N-1\}$. We will always assume that $\mu$ is non-degenerate, that is,  $p_i>0$ for all $i$.
We will consider the standard distance $d$ defined in \eqref{def:metusualcircle}.

In Section \ref{secprooof} we first prove Theorem \ref{teo3.1-CLT} invoking the technique of metric change to prove contraction on average, which puts us into a position to derive the claimed limit laws. In Section \ref{secMarSys} we introduce Markov systems. In Section \ref{seclargedesvios}, we will additionally assume that $\cF$ are $C^{1+\beta}$ diffeomorphisms and apply the preliminary results from Section \ref{secMarSys} to the potential which governs the Lyapunov exponents and prove Theorem \ref{teolargedesvios}.

\subsection{Limit laws: Proof of Theorem \ref{teo3.1-CLT}}\label{secprooof}

By \cite[Theorem 1.2]{GelSal:}, there exist $\alpha\in(0,1)$ and $\lambda\in(0,1)$ such that for every $x,y\in\bS^{1}$
\begin{equation}\label{def:metricCA}
 	\int d^{\alpha}(X_k^x(\bi),X_k^y(\bi))\,d\mu^{\bN}(\bi)\leq \lambda d^{\alpha}(x,y).
\end{equation}
Note that $d^{\alpha}$ is a is equivalent metric to $d$ on $\bS^1$ and has the property \textbf{(CA)}, that is, $(\cF,\mu)$ is contracting on average on $(\bS^1,d^{\alpha})$. Hence, by \cite[Theorem 2.1]{BDEG88}, there is a unique stationary probability, proving the first assertion.

On the other hand, contraction on average implies that, in particular, \cite[condition (H3)]{Pei:93} is satisfied for $k_0=k$ (note that the remaining conditions \cite[(H0)--(H2)]{Pei:93} are an immediate consequence of our hypotheses). Further, as explained above, hypotheses (H1) and (H2) imply synchronization \textbf{(S)} and hence \cite[condition (H4)]{Pei:93} is satisfied. Hence, all hypotheses of \cite[Theorem 5.1]{Pei:93} are satisfied and the SLLN holds for $(\cF,\mu)$.

Now, let us prove CLT. Let $h\colon\bS^1\to\bR$ be a H\"older continuous function. Take $\beta\in(0,1]$ and $C>0$ such that for every $x,y\in\bS^1$
\[
|h(x)-h(y)|\leq C d^{\beta}(x,y).
\]
Since the metric satisfies $d\leq 1$, for $\alpha\in(0,1)$ as in \eqref{def:metricCA}, it follows $d^\beta\le d^{\alpha\beta}$ and hence that $h$ is Lipschitz with respect to the metric $d^{\alpha \beta }$. Let us show that \cite[condition (H3)]{Pei:93} for the metric $d^{\alpha \beta}$ and $k_0=k$ is satisfied. Indeed, by Jensen's inequality,
\[
\int d^{\alpha \beta}(X_k^x(\bi),X_k^y(\bi))\,d\mu^\bN(\bi)\leq\left(\int d^{\alpha}(X_k^x(\bi),X_k^y(\bi))\,d\mu^{\bN}(\bi)\right)^{\beta}\leq \lambda^{\beta} d^{\alpha\beta}(x,y),
\]
which implies the desired.
As $d$ and $d^{\alpha \beta}$ are equivalent metrics, we get \cite[(H0)--(H2), (H4)]{Pei:93} hold. Again, all hypotheses of \cite[Theorem 5.1]{Pei:93} for the metric $d^{\alpha \beta}$ are satisfied and CLT holds true for $(\cF,\mu)$.

This proves the theorem.\qed

\subsection{Markov systems}\label{secMarSys}

Denote by $\nu$ the stationary measure for $(\cF,\mu)$ (by Theorem \ref{teo3.1-CLT}, it is unique). Let $L\geq 1$ be such that for all symbols $j\in \cI=\{0,\ldots,N-1\}$ and every $x,y\in\bS^1$,
\begin{equation}\label{maxLip}
    L^{-1}d(x,y)
    \leq d(f_j(x),f_j(y))\leq Ld(x,y),
\end{equation}
Below we will invoke Theorem \ref{teo:sync-on-average}; let $\alpha\in(0,1)$, $\lambda\in(0,1)$, and $c>0$ as provided by it. Consider the space
\begin{equation}\label{eq:defnormalpha}\begin{split}
	&\mathcal{H}_{\alpha}
	= \mathcal{H}_{\alpha}(\cI\times\bS^1)
	\eqdef \big\{\varphi\colon \cI\times\mathbb{S}^1\rightarrow \mathbb{R}\colon
		\varphi\text{ bounded measurable},  \|\varphi\|_{\alpha}<\infty\big\},\\
	&\quad\text{ where }\quad	
	\|\varphi\|_{\alpha}\eqdef \|\varphi\|_{\infty}
	+ \lvert\varphi\rvert_\alpha
	\quad\text{ with }\quad
	\lvert\varphi\rvert_\alpha
	\eqdef\sup_{j ,x\neq y}\frac{|\varphi(j,x)-\varphi(j,y)|}{d^{\alpha}(x,y)}.
\end{split}	\end{equation}
Note that $(\mathcal{H}_{\alpha},\|\varphi\|_{\alpha})$ is a Banach algebra with unity.

Consider the Markov system $(K,\mu\otimes\nu)$, where the Markov kernel $K$ is defined by
\begin{equation}\label{defMarkovs}
K\colon \cI\times\bS^1\rightarrow\mbox{Prob}(\cI\times\bS^1),\quad	
K(j,x)
	\eqdef\int\delta_{\left(i,f_{j}(x)\right)}\,d\mu(i)=\sum_{i=1}^{N-1}p_i\delta_{\left(i,f_{j}(x)\right)},
\end{equation}
where $\delta_{(\cdot)}$ denotes the usual Dirac measure supported on $\cI\times\bS^1$.
Note that $K$ determines the Laplace-Markov operator $\mathcal{Q}$ on the space $L^{\infty}(\cI\times \mathbb{S}^1)$ defined by
\[
	\mathcal{Q}(\varphi)(j,x)
	\eqdef\int\varphi(i,f_{j}(x))\,d\mu(i)
	=\sum_{i=1}^{N-1}p_i\varphi(i,f_{j}(x)).
\]

The following lemma establishes a relation between this operator and the skew-product  $T$ defined in \eqref{def:skewprod-RD}. Let $\pi\colon\cI^{\bN}\times \mathbb{S}^1\to \cI\times \mathbb{S}^1$ be the projection defined by $\pi(\bi,x)\eqdef(i_1,x)$.

\begin{lemma}\label{lemind}
For every $\varphi\in L^{\infty}(\cI\times\mathbb{S}^1)$, $j\in\cI$, $x\in\bS^1$, and $n\in\bN$,
\[
    (\mathcal{Q}^{n}(\varphi))(j,x)
    	=\int\big(\varphi\circ\pi\circ T^{n-1}\big)(\bi,f_{j}(x))\,d\mu^{\bN}(\bi).
\]
\end{lemma}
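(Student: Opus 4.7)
The plan is to proceed by induction on $n$, exploiting the semigroup relation $T^n=T^{n-1}\circ T$ together with the product structure of $\mu^{\bN}$ (equivalently, the fact that if we identify $\cI^{\bN}$ with $\cI\times\cI^{\bN}$ via $\bi\mapsto(i_1,\sigma(\bi))$, then $\mu^{\bN}$ corresponds to $\mu\otimes\mu^{\bN}$).

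For the base case $n=1$, the right-hand side reduces to $\int\varphi(\pi(\bi,f_j(x)))\,d\mu^{\bN}(\bi)=\int\varphi(i_1,f_j(x))\,d\mu^{\bN}(\bi)$. Since the integrand depends only on the first coordinate, this equals $\int\varphi(i,f_j(x))\,d\mu(i)$, which is precisely the definition of $\mathcal{Q}(\varphi)(j,x)$. So the $n=1$ case is immediate.

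For the induction step, assume the identity holds at level $n$. Using $\mathcal{Q}^{n+1}=\mathcal{Q}\circ\mathcal{Q}^n$ and the definition of $\mathcal{Q}$,
\[
\mathcal{Q}^{n+1}(\varphi)(j,x)
=\int(\mathcal{Q}^n\varphi)(i_1,f_j(x))\,d\mu(i_1).
\]
By the induction hypothesis applied with $(j,x)$ replaced by $(i_1,f_j(x))$,
\[
(\mathcal{Q}^n\varphi)(i_1,f_j(x))
=\int(\varphi\circ\pi\circ T^{n-1})(\bj,f_{i_1}(f_j(x)))\,d\mu^{\bN}(\bj).
\]
Substituting back and applying Fubini, together with the identification of $\mu\otimes\mu^{\bN}$ with $\mu^{\bN}$ through $(i_1,\bj)\leftrightarrow\bi$ with $\bj=\sigma(\bi)$, gives
\[
\mathcal{Q}^{n+1}(\varphi)(j,x)
=\int(\varphi\circ\pi\circ T^{n-1})(\sigma(\bi),f_{i_1}(f_j(x)))\,d\mu^{\bN}(\bi).
\]
Finally, invoking the definition of $T$ to write $T(\bi,f_j(x))=(\sigma(\bi),f_{i_1}(f_j(x)))$, the integrand is exactly $(\varphi\circ\pi\circ T^n)(\bi,f_j(x))$, which closes the induction.

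I do not anticipate a genuine obstacle; the whole content is bookkeeping. The only point where one must be careful is the reindexing in the induction step, namely checking that disintegrating $\mu^{\bN}$ along the first coordinate produces exactly the product $\mu\otimes\mu^{\bN}$ that matches the double integral produced by applying $\mathcal{Q}$ and then the inductive hypothesis; this is precisely what lets one absorb the outer integral against $\mu(i_1)$ into an integral against $\mu^{\bN}(\bi)$ via the relation $i_1=i_1(\bi)$, $\sigma(\bi)=\bj$.
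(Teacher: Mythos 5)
Your proof is correct and follows essentially the same route as the paper: induction on $n$, with the base case being the definition of $\mathcal{Q}$ and the inductive step combining $\mathcal{Q}^{n+1}=\mathcal{Q}\circ\mathcal{Q}^n$ with the identification $\mu^{\bN}\cong\mu\otimes\mu^{\bN}$ (the paper phrases this last step via $\sigma$-invariance of $\mu^{\bN}$ and then recombining the sum over the first symbol with the integral over the tail, but it is the same bookkeeping). No gap.
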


\begin{proof}
We argue by induction on $n$. The case $n = 1$ is clear. Suppose that the assertion holds for $n$ and let us prove it for $n+1$.
Indeed, using the inductive hypothesis we get
\[\begin{split}
    (\mathcal{Q}^{n+1}(\varphi))(j,x)
    &=\int \mathcal{Q}^n(\varphi)(i_1,f_{j}(x))\,d\mu(i_1)\\
    \small{\text{(by induction hypothesis) }}\quad
    &=\int \int\big(\varphi\circ\pi\circ T^{n-1}\big)(\bj,f_{i_1}(f_{j}(x)))
    	\,d\mu^{\bN}(\bj)\,d\mu(i_1)\\
    &= \sum_{i=0}^{N-1} p_i\int\big(\varphi\circ\pi\circ T^{n-1}\big)(\bj,f_i(f_{j}(x)))\,d\mu^{\bN}(\bj)\\
    \small{\text{(by $\sigma$-invariance of $\mu^\bN$) }}\quad
    &= \sum_{i=0}^{N-1} p_i\int\big(\varphi\circ\pi\circ T^{n-1}\big)
    	(\sigma(\bj),f_i(f_{j}(x)))\,d\mu^{\bN}(\sigma(\bj)).
\end{split}\]
Note that $\sigma(\bj)=\sigma(j,j_2,j_3,\ldots,j_n,\ldots)$ for every $j\in\cI$. Hence,
\[\begin{split}
    (\mathcal{Q}^{n+1}(\varphi))(j,x)
    &= \sum_{j_1=0}^{N-1} p_{j_1}\int
    	\big(\varphi\circ\pi\circ T^{n-1}\big)(\sigma(\bj),f_{j_1}(f_j(x)))\,d\mu^{\bN}(\sigma(\bj))\\
    &=\int\big(\varphi\circ\pi\circ T^{n-1}\big)
    	(\sigma(\bj),f_{j_1}(f_j(x)))\,d\mu^{\bN}(\bj)\\
    &=\int\big(\varphi\circ\pi\circ T^n\big)(\bj,f_j(x))\,d\mu^{\bN}(\bj),
\end{split}\]
which proves the assertion for $n+1$ and concludes the proof.
\end{proof}

\begin{proposition}\label{prop2}
The Laplace-Markov operator $\mathcal{Q}$ acts simply and quasi-compactly on $\mathcal{H}_{\alpha}(\cI\times\mathbb{S}^1)$ relative to the   measure $\mu\otimes\nu$.
\end{proposition}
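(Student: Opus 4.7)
The plan is to apply the Ionescu Tulcea--Marinescu (ITM) theorem via a Doeblin--Fortet inequality for $\mathcal{Q}$ acting on $\mathcal{H}_\alpha$, and then to deduce simplicity of the peripheral spectrum from the uniqueness of the stationary measure combined with aperiodicity.

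First, by Lemma~\ref{lemind},
\[
    (\mathcal{Q}^n\varphi)(j,x)
    = \int \varphi\bigl(i_n, X_{n-1}^{f_j(x)}(\bi)\bigr)\,d\mu^\bN(\bi),
\]
so that $\|\mathcal{Q}^n\varphi\|_\infty\le\|\varphi\|_\infty$ is immediate. For the H\"older seminorm, fix $j\in\cI$ and $x\ne y$; estimating inside the integral using the definition of $|\varphi|_\alpha$ gives
\[
    \bigl|(\mathcal{Q}^n\varphi)(j,x)-(\mathcal{Q}^n\varphi)(j,y)\bigr|
    \le |\varphi|_\alpha \int d^\alpha\bigl(X_{n-1}^{f_j(x)}(\bi),X_{n-1}^{f_j(y)}(\bi)\bigr)\,d\mu^\bN(\bi).
\]
Iterating the contraction-on-average bound \eqref{def:metricCA} via the Markov property gives that the latter integral is at most $\lambda^{n-1}d^\alpha(f_j(x),f_j(y))$, and \eqref{maxLip} then yields the Doeblin--Fortet inequality
\[
    \|\mathcal{Q}^n\varphi\|_\alpha
    \le L^\alpha\lambda^{n-1}\,|\varphi|_\alpha + \|\varphi\|_\infty.
\]

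Since $\cI$ is finite and $\bS^1$ is compact, Arzel\`a--Ascoli implies that the unit ball of $\mathcal{H}_\alpha$ is relatively compact in $C^0(\cI\times\bS^1)$, and hence in $L^\infty(\mu\otimes\nu)$. The ITM theorem then delivers quasi-compactness of $\mathcal{Q}$ on $\mathcal{H}_\alpha$: finitely many eigenvalues on the unit circle, each with finite-dimensional generalized eigenspace, and the remainder of the spectrum contained in a disc of radius strictly smaller than $1$.

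For the simplicity of the eigenvalue $1$, uniqueness of the $\mu$-stationary measure (Theorem~\ref{teo3.1-CLT}) forces the $\mathcal{Q}$-fixed subspace in $\mathcal{H}_\alpha$ to consist of constants. To rule out other unimodular eigenvalues, property~\textbf{(P)} implies aperiodicity of the RDS, whence by \cite[Proposition~4.14]{Mal:17} the iterates $\mathcal{Q}^n\varphi$ converge uniformly to the constant $(\mu\otimes\nu)(\varphi)$; a non-constant eigenfunction satisfying $\mathcal{Q}\varphi=e^{i\theta}\varphi$ with $\theta\not\equiv 0\pmod{2\pi}$ would oscillate under iteration and contradict this convergence. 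The main subtlety lies in this last step: combining aperiodicity with Malicet's convergence result is what excludes non-trivial peripheral spectrum; the Doeblin--Fortet estimate itself follows routinely from Lemma~\ref{lemind} and the iterated contraction-on-average bound.
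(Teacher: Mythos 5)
Your proposal is correct, but it takes a genuinely different route from the paper. The paper proves the spectral gap directly via two claims: Claim~\ref{claim01} gives $|\mathcal{Q}^n\varphi|_\alpha \le C\lambda^n|\varphi|_\alpha$ (essentially your seminorm estimate), and Claim~\ref{claim02} shows $\|\mathcal{Q}^n\varphi - (\int\varphi\,d(\mu\otimes\nu))\mathbf{1}\|_\infty \le C\lambda^n|\varphi|_\alpha$ by combining Lemma~\ref{lemind}, $T$-invariance of $\mu^\bN\otimes\nu$, and the synchronization-on-average bound from Theorem~\ref{teo:sync-on-average}. Together these give exponential convergence $\|\mathcal{Q}^n\varphi - (\mu\otimes\nu)(\varphi)\mathbf{1}\|_\alpha \le 2C\lambda^n|\varphi|_\alpha$, which immediately yields simple quasi-compactness with an explicit spectral gap $1-\lambda$ and no recourse to any functional-analytic black box. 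You instead establish a Doeblin--Fortet (Lasota--Yorke) inequality, invoke Ionescu Tulcea--Marinescu via Arzel\`a--Ascoli compactness of the unit ball of $\mathcal{H}_\alpha$ in $L^\infty$ to get quasi-compactness with essential spectral radius at most $\lambda$, and then treat the peripheral spectrum separately: the $\|\cdot\|_\infty$-convergence $\mathcal{Q}^n\varphi\to(\mu\otimes\nu)(\varphi)\mathbf{1}$ derived from aperiodicity and \cite[Proposition~4.14]{Mal:17} rules out unimodular eigenvalues other than $1$ and shows the $1$-eigenspace is the constants. The interesting observation is that your uniform-convergence ingredient (Malicet's convergence, non-quantitative) plays exactly the role of the paper's Claim~\ref{claim02} (which is quantitative with the explicit rate $\lambda$): the paper replaces both the ITM machinery and the appeal to Malicet by that single sharper estimate, while your argument is more in the spirit of a classical spectral-gap proof and would generalize more readily to settings where the exponential $\|\cdot\|_\infty$-estimate is unavailable. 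One small gloss worth making explicit in your write-up: Malicet's Proposition 4.14 concerns the Markov operator on $\bS^1$, so the convergence of $\mathcal{Q}^n\varphi$ on $\cI\times\bS^1$ requires the observation that, by Lemma~\ref{lemind} and the product structure of $\mu^\bN$, $(\mathcal{Q}^n\varphi)(j,x)=(P^{n-1}\psi)(f_j(x))$ with $\psi(x)\eqdef\sum_i p_i\varphi(i,x)$ continuous; similarly, it is the convergence of $\mathcal{Q}^n\varphi$ to a constant, not uniqueness of $\nu$ on its own, that identifies the $\mathcal{Q}$-fixed subspace with the constants everywhere (not just on $\supp\nu$).
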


\begin{proof}
To prove the assertion, we need to show that there are $C>0$ and $\gamma\in(0,1)$ such that for every $\varphi\in\mathcal{H}_{\alpha}$ and $n\in\bN$,
\[
	\lVert \mathcal{Q}^n\varphi - \big(\int\varphi\,d(\mu\otimes\nu)\big)\mathbf{1}\rVert_\alpha	
	\le C\gamma^n\lVert\varphi\rVert_\alpha.
\]
This will be done in two steps (Claims \ref{claim01} and \ref{claim02}). Recall the definition of $\lvert\cdot\rvert_\alpha$ in \eqref{eq:defnormalpha}.

\begin{claim}\label{claim01}
There is $C>0$ so that for all $\varphi\in L^{\infty}(\cI\times\mathbb{S}^1)$ and $n\in \mathbb{N}$
\[
	|\mathcal{Q}^{n}\varphi|_{\alpha}
	\leq C\lambda^n |\varphi|_{\alpha}.
\]
\end{claim}

\begin{proof}
Recall that $\pi(\bi,x)\eqdef (i_1,x)$. By Lemma \ref{lemind}, together with the definition of $\lvert\cdot\rvert_\alpha$ in \eqref{eq:defnormalpha}, we get
\[\begin{split}
     &|\mathcal{Q}^{n}\varphi|_{\alpha}
     =\sup_{j,x\neq y}\Big|\int\frac{\left(\varphi\circ\pi\circ T^{n-1}(\bi,f_{j}(x))
     		-\varphi\circ\pi\circ T^{n-1}(\bi,f_{j}(y))\right)}{d^{\alpha}(x,y)}\,d\mu^{\bN}(\bi)\Big|\\
     &\leq\sup_{j,x\neq y}\int
     	\frac{\big|\varphi\circ\pi(\sigma^{n-1}(\bi),f_{\bi}^{n-1}\circ f_{j}(x))-\varphi\circ\pi(\sigma^{n-1}(\bi),f_{\bi}^{n-1}\circ f_{j}(y))\big|}{d^{\alpha}(x,y)}\,d\mu^{\bN}(\bi),\\
     &\le |\varphi|_{\alpha}\sup_{j,x\neq y}
     	\int\frac{d^{\alpha}(f_{\bi}^{n-1}\circ f_{j}(x),f_{\bi}^{n-1}\circ f_{j}(y))}
		{d^{\alpha}\big( f_{j}(x), f_{j}(y) \big)}\frac{d^{\alpha}\big( f_{j}(x), f_{j}(y) \big)}{d^{\alpha}(x,y)}\,d\mu^{\bN}(\bi)
\end{split}\]
Hence, from \eqref{maxLip}  we get		
\[\begin{split}
   |\mathcal{Q}^{n}\varphi|_{\alpha}
    & \leq L^\alpha |\varphi|_{\alpha}\sup_{j,x\neq y}
    	\int\frac{d^{\alpha}\big(f_{\bi}^{n-1}\circ f_{j}(x),f_{\bi}^{n-1}\circ f_{j}(y) \big)}{d^{\alpha}\big( f_{j}(x), f_{j}(y) \big)}\,d\mu^{\bN}(\bi)\\
    &= L^\alpha\lvert\varphi\rvert_\alpha\sup_{x\ne y}\int\frac{d^{\alpha}(X_{n-1}^x(\bi),X_{n-1}^y(\bi))}{d^\alpha(x,y)}\,d\mu^{\bN}(\bi)\\
    &\le L^\alpha |\varphi|_{\alpha}\cdot c\lambda^{n-1},
\end{split}\]
where for the last inequality we applied \eqref{def:metricCA}. Letting $C\eqdef L^{\alpha}c\lambda^{-1}$, this proves the claim.
\end{proof}

\begin{claim}\label{claim02}
	There is $C>0$ so that for all $\varphi\in L^{\infty}( \cI\times\mathbb{S}^1)$ and $n\in \mathbb{N}$
\[
	\Big\|\mathcal{Q}^{n}\varphi-\Big(\int\varphi\,d(\mu\otimes\nu)\Big)\mathbf{1}\Big\|_{\infty}
\leq C\lambda^n |\varphi|_{\alpha}.
\]
\end{claim}

\begin{proof}
As $\nu$ is stationary, the measure $\mu^{\bN}\otimes\nu$ is $T$-invariant. Thus,
\[
\int(\varphi\circ\pi)\,d(\mu^{\bN}\otimes\nu)=\int(\varphi\circ \pi\circ T^{n-1})\,d(\mu^{\bN}\otimes\nu).
\]
Hence, using Lemma \ref{lemind},
\begin{align*}
    &\Big\|\mathcal{Q}^{n}\varphi-\Big(\int\varphi\,d(\mu\otimes\nu)\Big)\mathbf{1}\Big\|_{\infty}
    = \sup_{j\in\cI,y\in\mathbb{S}^1}\Big| \mathcal{Q}^{n}\varphi(j,y)-\int(\varphi\circ\pi)\,d(\mu^{\bN}\otimes\nu)\Big|\\
    &= \sup_{j\in\cI,y\in\mathbb{S}^1}\Big| \int(\varphi\circ\pi\circ T^{n-1})(\bi,f_j(y))\,d\mu^{\bN}(\bi)-
    	\int(\varphi\circ \pi\circ T^{n-1})\,d(\mu^{\bN}\otimes\nu)\Big|\\
&\leq\sup_{j,y} \int \int\left|\varphi\circ \pi\circ T^{n-1}(\bi,f_j(y))-\varphi\circ \pi\circ T^{n-1}(\bi,x)  \right| \,d\mu^{\bN}(\bi) d\nu(x)\\	
&\leq\sup_{j,y} \int \int\left|\varphi\circ \pi \big(\sigma^{n-1}(\bi),f_\bi^{n-1}f_j(y)\big)-\varphi\circ \pi\big(\sigma^{n-1}(\bi),f_\bi^{n-1}(x)\big)  \right| \,d\mu^{\bN}(\bi) d\nu(x).
\end{align*}
By definition of $|\varphi|_{\alpha}$ in \eqref{eq:defnormalpha} and Theorem \ref{teo:sync-on-average} we get
\begin{align*}
    \Big\|\mathcal{Q}^{n}\varphi-\Big(\int\varphi\,d(\mu\otimes\nu)\Big)&\mathbf{1}\Big\|_{\infty}\\
    &\leq |\varphi|_{\alpha} \sup_{j,y} \int\int d^{\alpha}\big(f^{n-1}_{\bi}( f_j(y)),f^{n-1}_{\bi}(x)\big)\, d\mu^{\bN}(\bi)d\nu(x)\\
    &\leq c\lambda^{n-1}|\varphi|_{\alpha} \sup_{j,y} \int d^{\alpha}\left( f_j(y),x\right)\, d\nu(x)
    = c\lambda^{n-1}|\varphi|_{\alpha} .
\end{align*}
Letting $C\eqdef c\lambda^{-1}$ proves the claim.
\end{proof}

Recalling the definition of $\lVert\cdot\rVert_\alpha$ in \eqref{eq:defnormalpha}, it follows from  Claim \ref{claim01} and Claim \ref{claim02} that there exist $C>0$ so that
\[
	\lVert \mathcal{Q}^n\varphi - \big(\int\varphi\,d(\mu\otimes\nu)\big)\mathbf{1}\rVert_\alpha	
	\le 2C\lambda^n\lvert\varphi\rvert_\alpha.	
\]
This proves the proposition.
\end{proof}

\subsection{Large deviations of Lyapunov exponents}\label{seclargedesvios}
In this section, we assume additionally that $\cF$ is an IFS of $C^{1+\beta}$ diffeomorphisms of $\bS^1$, $\beta>0$, satisfying \textbf{(LC)} and \textbf{(P)}.

Consider the function $\phi\colon \cI\times\mathbb{S}^1\rightarrow \mathbb{R}$ given by
\[
	\phi(j,x)\eqdef \log|(f_{j})'(x)|.
\]
By \textbf{(P)}, $\mathcal{F}$ is proximal on $(\bS^1,d)$. Hence, by Claim \ref{claim:omegafullmeas}, for $q\in(0,1)$ so that \textbf{(LC)} holds, we have the following (exponential) synchronization:
for $x,y\in\bS^1$, $x\ne y$, the set
\[
     \Omega^{x,y}
     \eqdef \big\{\bi\in\cI^{\bN}\colon
      \, \exists C>0, 	\mbox{ such that }
		d(X_n^x(\bi),X_n^y(\bi))\leq C q^n \text{ for all } n\in\bN\big\}.
\]
has full measure:
 \[
	\mu^{\bN}(\Omega^{x,y})=1.
\]

Before proving Theorem \ref{teolargedesvios} let us first establish some distortion results.
For $\bi\in\Omega^{x,y}$ define
\[
	I_{\bi}
	\eqdef \begin{cases} [x,y],  \text{ if } \lim_{n\to\infty}\big|f_{\bi}^n([x,y])\big|=0,\\
					[y,x], \text{ if } \lim_{n\to\infty}\big|f_{\bi}^n([y,x])\big|=0
	\end{cases}
	\quad\text{ and }\quad
		\delta_n(\bi)
	\eqdef |f_{\bi}^n(I_\bi)|.
\]
Consider the following modulus of continuity
\[
	\omega(\delta)
	\eqdef \max_{j\in\cI}\max_{d(z,w)\leq \delta}\left|\log|f_j'(z)|-\log|f_j'(w)|\right|
\]
and note that $\omega(\delta)\to0$ as $\delta\to0$.

\begin{lemma}[Tempered distortion in average]\label{clac2dif}
For every $x,y\in\bS^1$, $x\neq y$, and $\bi\in\Omega^{x,y}$,
\[
	\lim_{n\to\infty}\frac{1}{n}\log\max_{z,w\in I_\bi}\Big|\frac{(f_{\bi}^n)'(z)}{(f_{\bi}^n)'(w)}\Big|
    =0
    \quad\text{ and }\quad
    \lim_{n\to\infty}\frac{1}{n}\int
		\log\max_{z,w\in I_\bi}\Big|\frac{(f_{\bi}^n)'(z)}{(f_{\bi}^n)'(w)}\Big| \,d\mu^{\bN}(\bi)=0.
\]
\end{lemma}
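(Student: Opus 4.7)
The plan is to reduce both limits to a Cesaro-type average of the modulus of continuity $\omega$ evaluated at the shrinking arc lengths $\delta_k(\bi)$, and exploit the synchronization guarantee that $\delta_k(\bi)\to 0$ for $\bi\in\Omega^{x,y}$.

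First, I will apply the chain rule $(f_\bi^n)'(z)=\prod_{k=0}^{n-1}f_{i_{k+1}}'(f_\bi^k(z))$ to write
\[
\log\frac{(f_\bi^n)'(z)}{(f_\bi^n)'(w)}=\sum_{k=0}^{n-1}\Bigl(\log|f_{i_{k+1}}'(f_\bi^k(z))|-\log|f_{i_{k+1}}'(f_\bi^k(w))|\Bigr).
\]
Since $f_\bi^k$ is a homeomorphism sending the arc $I_\bi$ to $f_\bi^k(I_\bi)$, any two points $f_\bi^k(z),f_\bi^k(w)$ with $z,w\in I_\bi$ lie at distance at most $\delta_k(\bi)$. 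Applying the definition of $\omega$ term by term yields the key estimate
\[
\max_{z,w\in I_\bi}\Bigl|\log\tfrac{(f_\bi^n)'(z)}{(f_\bi^n)'(w)}\Bigr|\le\sum_{k=0}^{n-1}\omega(\delta_k(\bi)).
\]

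Second, for $\bi\in\Omega^{x,y}$, the definition of $I_\bi$ forces $\delta_k(\bi)=|f_\bi^k(I_\bi)|\to 0$ as $k\to\infty$ (this is how $I_\bi$ is chosen among the two complementary arcs determined by $x,y$, and it is consistent with the exponential decay $d(X_k^x,X_k^y)\le Cq^k$ furnished by the definition of $\Omega^{x,y}$). Continuity of $\omega$ at $0$ with $\omega(0)=0$ then gives $\omega(\delta_k(\bi))\to 0$, and the classical Cesaro lemma yields $\frac1n\sum_{k=0}^{n-1}\omega(\delta_k(\bi))\to 0$. Dividing the displayed estimate by $n$ and taking $\limsup$ proves the first (pointwise) limit.

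Third, for the integral limit I will use bounded convergence. Since $\cF=\{f_0,\dots,f_{N-1}\}$ is a finite family of $C^1$ maps on the compact circle, the quantity $M\eqdef\max_{j,\,z,w\in\bS^1}\bigl|\log|f_j'(z)|-\log|f_j'(w)|\bigr|$ is finite, and hence $\omega(\delta)\le M$ for every $\delta$. Consequently $\frac1n\sum_{k=0}^{n-1}\omega(\delta_k(\bi))\le M$ uniformly in $n$ and $\bi$. Combined with the pointwise convergence to $0$ on the full-measure set $\Omega^{x,y}$, the bounded convergence theorem yields the second limit. The main (minor) point of care is ensuring that $\delta_k(\bi)\to 0$ really follows from $\bi\in\Omega^{x,y}$ together with the convention that $I_\bi$ is the shrinking arc; this is built into the definition and does not rely on anything beyond the synchronization provided by Proposition~\ref{prop:sync-exp} and Claim~\ref{claim:omegafullmeas}. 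Note that the $C^{1+\beta}$ hypothesis is not actually required for this lemma (it will be used elsewhere in the section to obtain Hölder estimates via $\omega(\delta)\le K\delta^\beta$), but it does give the stronger statement of uniform bounded distortion, of which tempered distortion is an immediate consequence.
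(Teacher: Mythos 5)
Your proof is correct and follows essentially the same route as the paper. Both reduce the log-distortion to a telescoping sum $\sum_{k=0}^{n-1}\omega(\delta_k(\bi))$ via the chain rule, use $\delta_k(\bi)\to 0$ on $\Omega^{x,y}$ together with $\omega(0^+)=0$ and Ces\`aro averaging for the pointwise limit, and then apply dominated/bounded convergence with a uniform bound on the integrand (the paper writes $2\log L$ with $L$ from \eqref{maxLip}, which is the same as your $M$) for the integral limit. Your parenthetical remark that $C^1$ suffices for this lemma, and that $C^{1+\beta}$ actually yields the stronger uniform bounded distortion $\sum_k\omega(\delta_k(\bi))\le K\sum_kC^\beta q^{k\beta}<\infty$, is accurate and a nice observation, though not needed for the statement.
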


\begin{proof}
We have that
\[
	\log\max_{z,w\in I_\bi}\Big|\frac{(f_{\bi}^n)'(z)}{(f_{\bi}^n)'(w)}\Big|
	\leq \sum_{k=0}^{n-1}\omega({\delta_k(\bi)}),
\]
By the above $\delta_k(\bi)\to0$ and hence $\omega(\delta_k(\bi))\to0$. This implies the first assertion.
Since $\mu^{\bN}(\Omega^{x,y})=1$ and
\[
	0\leq\frac{1}{n}\log\max_{z,w\in I_\bi}\left|\frac{(f_{\bi}^n)'(z)}{(f_{\bi}^n)'(w)}\right|
	\leq2\log L,
\]
the dominated convergence theorem implies the claim.
\end{proof}

We are now ready to show Theorem \ref{teolargedesvios}.

\begin{proof}[Proof of Theorem \ref{teolargedesvios}]
Consider the Markov system $(K,\mu\otimes\nu)$ defined in \eqref{defMarkovs}. By Proposition \ref{prop2}, it acts simply and quasi-compactly on the Banach algebra $\mathcal{H}_\alpha=\mathcal{H}_{\alpha}(\cI\times\mathbb{S}^1)$ relative to $\mu\otimes\nu$. As $\cF$ is assumed to consist of $C^{1+\beta}$ maps, $\phi$ is $\alpha$-H\"older for any $\alpha\in(0,\min\{\alpha_0,\beta\}]$. Hence, $\phi\in \mathcal{H}_{\alpha}$. Hence, we can apply \cite[Theorem 4.4]{DuaKle:17} to the function $\phi$. It guarantees that there are positive constants $h,\varepsilon_0,c$ such that for every $\varepsilon\in(0,\varepsilon_0)$, $(j,x)\in \cI\times \mathbb{S}^1$, and $n\in\bN$,
\[
\mu^\bN\left(\bi\in\cI^{\bN}\colon
	i_1=j \mbox{ and }
	\left|\log |(f^{n}_{\bi})'(x)|-n\gamma(\mu)\right|>n\varepsilon \right)
	\leq ce^{-nh\varepsilon^2 }.
\]

Averaging in $j\in\cI$ with respect to $\mu$, for every $x\in\bS^1$ we get
\begin{equation}\label{LDP1}
	\mu^\bN\left(\bi\in\cI^{\bN}
	\colon\left|\log |(f^{n}_{\bi})'(x)|-n\gamma(\mu)\right|>n\varepsilon \right)
	\leq ce^{-nh\varepsilon^2 }.
\end{equation}
This proves the second assertion of the theorem.

To prove the third assertion in the theorem, first observe that for every $\bi\in\Omega^{x,y}$, by the mean value theorem and triangular inequality, it holds
\[
    \Big|\log \frac{d(X_n^x(\bi),X_n^y(\bi))}{d(x,y)}-n\gamma(\mu)\Big|
    \le \log\max_{z,w\in I_\bi}\Big|\frac{(f_{\bi}^n)'(z)}{(f_{\bi}^n)'(w)}\Big|
    	+|\log |(f_{\bi}^n)'(x)|-n\gamma(\mu)|.
\]
Therefore,
\[\begin{split}
    \mu^\bN&\Big(\Big|\log \frac{d(X_n^x(\bi),X_n^y(\bi))}{d(x,y)}-n\gamma(\mu)\Big|
    	>n\varepsilon\Big)\\
    &\le \mu^\bN\Big( \max_{z,w\in I_\bi}\Big|
    		\frac{(f_{\bi}^n)'(z)}{(f_{\bi}^n)'(w)}\Big|>e^{n\varepsilon/2}\Big)
    +\mu^\bN\Big(\big|\log |(f_{\bi}^n)'(x)|-n\gamma(\mu)\big|>\frac{n\varepsilon}{2}\Big)\\
    &\eqdef I_1(\varepsilon,n) +I_2(\varepsilon,n)
\end{split}\]
It follows from Chebyshev's inequality that
\begin{equation}\label{eq3.5:2}
	I_1(\varepsilon,n)
	\le e^{-n\varepsilon/2}\int\max_{z,w\in I_\bi}\Big|\frac{(f_{\bi}^n)'(z)}{(f_{\bi}^n)'(w)}\Big|\,d\mu^{\bN}(\bi).
\end{equation}
As a consequence of Lemma \ref{clac2dif},  the dominated convergence theorem implies
\[
    \lim_{n\to\infty}\frac{1}{n}\int\max_{z,w\in I_\bi}\Big|\frac{(f_{\bi}^n)'(z)}{(f_{\bi}^n)'(w)}\Big|\,d\mu^{\bN}(\bi)=1.
\]
Hence, for every $x,y$ there exists a constant $C=C(x,y)>0$ such that for all $n\in\bN$
\[
	\int\max_{z,w\in I_\bi}\left|\frac{(f_{\bi}^n)'(z)}{(f_{\bi}^n)'(w)}\right|\,d\mu^{\bN}(\bi)
	\le C n,
\]
and \eqref{eq3.5:2} implies
\begin{equation}\label{I1}
     I_1(\varepsilon,n)
     \le Cn e^{-n\varepsilon/2}.
\end{equation}
To estimate $I_2(\varepsilon,n)$, for all $\varepsilon\in(0,\varepsilon_0)$ the estimate \eqref{LDP1} implies
\begin{equation}\label{I2}
  I_2(\varepsilon,n)
  \le ce^{-nh\varepsilon^2/4}.
\end{equation}
The estimates \eqref{I1} and \eqref{I2} together imply
\[
	\mu^\bN\Big(\Big|\log \frac{d(X_n^x(\bi),X_n^y(\bi))}{d(x,y)}-n\gamma(\mu)\Big|>n\varepsilon\Big)
	\le C n e^{-n\varepsilon/2}+ ce^{-nh\varepsilon^2/4}.
\]
This implies the third assertion of the theorem.

Now, let us prove
\begin{equation}\label{eq:lim-liap-exp-1}
	\lim_{n\to\infty}\frac{1}{n}\int\log|(f_{\bi}^n)'(x)|\,d\mu^{\bN}(\bi)
	=\gamma(\mu).
\end{equation}
For $\varepsilon\in(0,\varepsilon_0)$ define
\[\begin{split}
	\Lambda_{\varepsilon}
	\eqdef\Big\{&\bi\in\cI^{\bN}\colon\\
	&\gamma(\mu)-\varepsilon
	\leq \liminf_{n\to\infty}\frac{1}{n}\log|(f_{\bi}^n)'(x)|\leq\limsup_{n\to\infty}\frac{1}{n}\log|(f_{\bi}^n)'(x)|
	\leq \gamma(\mu) +\varepsilon.
\end{split}\]
Note that \eqref{LDP1} together with the Borel-Cantelli lemma implies $\mu^{\bN}(\Lambda_{\varepsilon})=1$. Hence
\[
	1
	= \mu^\bN\Big(\bigcap_{k=1}^{\infty}\Lambda_{1/k}\Big)
	= \mu^\bN\Big(\bi\in\cI^{\bN}\colon
		\lim_{n\to\infty}\frac{1}{n}\log|(f_{\bi}^n)'(x)|=\gamma(\mu)\Big).
\]
So we conclude that for almost every $\bi\in\cI^{\bN}$.
\begin{equation*}
	\lim_{n\to\infty}\frac{1}{n}\log|(f_{\bi}^n)'(x)|
	=\gamma(\mu).
\end{equation*}
Since for every $n\in\mathbb{N}$ and $\bi\in\cI^{\bN}$, $-n\log L\leq \log|(f_{\bi}^n)'(x)|\leq n\log L$, with $L$ as in \eqref{maxLip}. Hence, dominated convergence implies that \eqref{eq:lim-liap-exp-1} holds.

Finally, let us consider synchronization rates and prove
\begin{equation}\label{eq:lim-liap-exp-2}
    \lim_{n\to\infty}\frac{1}{n}\int\log d(X_n^x(\bi),X_n^y(\bi))\,d\mu^{\bN}(\bi)
	=\lim_{n\to\infty}\frac{1}{n}\int\log \frac{d(X_n^x(\bi),X_n^y(\bi)}{d(x,y)}\,d\mu^{\bN}(\bi)
	=\gamma(\mu).
\end{equation}
Indeed, by the mean value inequality, for every $x,y\in\bS^1$, $x\neq y$, and $\bi\in\Omega^{x,y}$
\[
 	\min_{z,w\in I_\bi}\left|\frac{(f_{\bi}^n)'(z)}{(f_{\bi}^n)'(w)}\right||(f_{\bi}^n)'(x)|
 	\leq \frac{d(X_n^x(\bi),X_n^y(\bi))}{d(x,y)}
 	\leq \max_{z,w\in I_\bi}\left|\frac{(f_{\bi}^n)'(z)}{(f_{\bi}^n)'(w)}\right||(f_{\bi}^n)'(x)|.
\]
Notice that
\[
	\min_{z,w\in I_\bi}\left|\frac{(f_{\bi}^n)'(z)}{(f_{\bi}^n)'(w)}\right|
	=\left( \max_{z,w\in I_\bi}\left|\frac{(f_{\bi}^n)'(z)}{(f_{\bi}^n)'(w)}\right|\right)^{-1}.
\]
Hence, by Lemma \ref{clac2dif}, for almost every $\bi\in\cI^{\bN}$
\[\begin{split}
	\lim_{n\to\infty}\frac{1}{n}\log d(X_n^x(\bi),X_n^y(\bi))
	&=\lim_{n\to\infty}\frac{1}{n}\log \frac{d(X_n^x(\bi),X_n^y(\bi))}{d(x,y)}\\
	&=\lim_{n\to\infty}\frac{1}{n}\log|(f_{\bi}^n)'(x)|
	=\gamma(\mu).
\end{split}\]
By dominated convergence, we can conclude that \eqref{eq:lim-liap-exp-2} holds.
This finishes the proof of the theorem.
\end{proof}

\bibliographystyle{alpha}

\end{document}